\newtheorem{theorem}{Theorem}[section]
\newtheorem{lemma}[theorem]{Lemma}
\theoremstyle{definition}
\newtheorem{example}[theorem]{Example}
\theoremstyle{remark}
\newtheorem{remark}[theorem]{Remark}
\numberwithin{equation}{section}
\begin{document}

\title{Confluence in Labeled Chip-Firing}

\author{Caroline Klivans, Patrick Liscio}

\begin{abstract}

  In 2016, Hopkins, McConville, and Propp proved that labeled
  chip-firing on a line always leaves the chips in sorted order if the
  number of chips is even.  We present a novel proof of this
  result.  We then apply our methods to resolve a number of related
  conjectures concerning the confluence of labeled chip-firing
  systems.

\end{abstract}

\maketitle

\section{Introduction}

This paper is concerned with a labeled variant of the chip-firing
process as defined by Hopkins, McConville, and Propp \cite{HMP16}.  In
unlabeled chip-firing, a collection of indistinguishable chips are
placed at the nodes of a graph.  If a node has at least as many chips
as it has neighbors, it can ``fire'' by sending one chip to each of
its neighbors.  The process terminates if no site has enough chips to
fire.  In the labeled chip-firing process a collection of labeled
chips are placed at the origin of the infinite path graph.  Again, if a node
has at least as many chips as it has neighbors, it can fire.  The difference in the labeled case is that  two distinct 
chips at the node are chosen and the chip of larger label
moves to the right while the chip of smaller label moves to the left,
see Example~\ref{ex1}.

In \cite{HMP16}, it was shown that, for an even number of chips,
labeled chip-firing terminates in a unique configuration regardless of the order in which nodes fire and regardless of 
the choice of chips made at each node.  Moreover, in the unique terminal configuration, the chips
are in sorted order.  The property that the final configuration is
unique regardless of the intermediate steps is known as global confluence
and is a fundamental property of unlabeled chip-firing.  In unlabeled
chip-firing global confluence is proved via local confluence.  Local confluence states that for any two available fires, there is a
common configuration that can be reached after either of them in only
one additional fire.  In the case of unlabeled chip-firing, any two
available fires may be performed in either order without changing the
resulting configuration.  Local confluence combined with Newman's
Lemma on abstract rewriting systems \cite{Newman42} gives a global
confluence property for unlabeled chip-firing, in which any
terminating chip-firing process must have a {unique} final
configuration.

In labeled chip-firing, local confluence does not hold.
The sorting result for labeled chip-firing from \cite{HMP16} is
particularly notable for proving global confluence even though local
confluence, and thus Newman's Lemma does not apply.  Without this
tool, the labeled case proved significantly more challenging to
establish.  

Here we give a novel, more general and more illuminating proof of
global confluence for labeled chip-firing and related systems.  Our
proof is based on the analysis of a firing order poset.
Figures~\ref{fign10} and~\ref{fign20} visually demonstrate the
structure of confluent versus non-confluent labeled chip-firing processes that arise if we
have an even or odd number of chips initially.  The existence of the
diamond shape at the bottom of the Haase diagram in the even case is crucial for confluence.  

There have been attempts to generalize the results of \cite{HMP16} including several conjectures from \cite{Hopkins16} and \cite{HMP16} in which labeled chip-firing is extended to modified versions of the 1-dimensional grid graph.  Additionally, Galashin et al. \cite{GHMP2, GHMP1}  treat the labeled chip-firing problem as chip-firing on a Type A root system, and then generalize the problem to apply to other types of root systems and 
more general classes of firing moves.  In \cite{FK19}, global confluence is proved without local confluence for higher dimension forms of chip-firing.  
Our methods allows us to prove many of the above cases via a unified methodology.  
  
In Section~\ref{main}, we present the proof that labeled chip-firing sorts.
In Section~\ref{related}, we
apply these methods to prove a series of related conjectures on
sorting via chip-firing on modified versions of the one-dimensional
grid graph.  In Section~\ref{non-sorting}, we discuss how these methods can shed light on 
 the case where the number of chips is odd.

\begin{figure}
\centering
\begin{tikzpicture}[scale=0.4]



\draw[black, thick] (0,0) -- (0,-10);
\draw[black, thick] (1,-2) -- (1,-9);
\draw[black, thick] (-1,-2) -- (-1,-9);
\draw[black, thick] (2,-5) -- (2,-9);
\draw[black, thick] (-2,-5) -- (-2,-9);
\draw[black, thick] (0,-1) -- (1,-2);
\draw[black, thick] (0,-1) -- (-1,-2);
\draw[black, thick] (0,-3) -- (2,-5);
\draw[black, thick] (0,-4) -- (1,-5);
\draw[black, thick] (0,-3) -- (-2,-5);
\draw[black, thick] (0,-4) -- (-1,-5);
\draw[black, thick] (0,-6) -- (3,-9);
\draw[black, thick] (0,-7) -- (2,-9);
\draw[black, thick] (0,-8) -- (1,-9);
\draw[black, thick] (0,-6) -- (-3,-9);
\draw[black, thick] (0,-7) -- (-2,-9);
\draw[black, thick] (0,-8) -- (-1,-9);
\draw[black, thick] (0,-10) -- (4,-14);
\draw[black, thick] (-1,-11) -- (3,-15);
\draw[black, thick] (-2,-12) -- (2,-16);
\draw[black, thick] (-3,-13) -- (1,-17);
\draw[black, thick] (-4,-14) -- (0,-18);
\draw[black, thick] (0,-10) -- (-4,-14);
\draw[black, thick] (1,-11) -- (-3,-15);
\draw[black, thick] (2,-12) -- (-2,-16);
\draw[black, thick] (3,-13) -- (-1,-17);
\draw[black, thick] (4,-14) -- (0,-18);
\draw[black, thick] (1,-2) -- (0,-6);
\draw[black, thick] (1,-4) -- (0,-7);
\draw[black, thick] (1,-5) -- (0,-8);
\draw[black, thick] (1,-7) -- (0,-9);
\draw[black, thick] (1,-9) -- (0,-10);
\draw[black, thick] (-1,-2) -- (0,-6);
\draw[black, thick] (-1,-4) -- (0,-7);
\draw[black, thick] (-1,-5) -- (0,-8);
\draw[black, thick] (-1,-7) -- (0,-9);
\draw[black, thick] (-1,-9) -- (0,-10);
\draw[black, thick] (2,-8) -- (1,-9);
\draw[black, thick] (2,-9) -- (1,-11);
\draw[black, thick] (-2,-8) -- (-1,-9);
\draw[black, thick] (-2,-9) -- (-1,-11);
\draw[black, thick] (3,-9) -- (2,-12);
\draw[black, thick] (-3,-9) -- (-2,-12);

\filldraw[black] (0,0) circle (2pt);
\filldraw[black] (0,-1) circle (2pt);
\filldraw[black] (0,-2) circle (2pt);
\filldraw[black] (0,-3) circle (2pt);
\filldraw[black] (0,-4) circle (2pt);
\filldraw[black] (0,-5) circle (2pt);
\filldraw[black] (0,-6) circle (2pt);
\filldraw[black] (0,-7) circle (2pt);
\filldraw[black] (0,-8) circle (2pt);
\filldraw[black] (0,-9) circle (2pt);
\filldraw[black] (0,-10) circle (2pt);
\filldraw[black] (0,-12) circle (2pt);
\filldraw[black] (0,-14) circle (2pt);
\filldraw[black] (0,-16) circle (2pt);
\filldraw[black] (0,-18) circle (2pt);
\filldraw[black] (1,-2) circle (2pt);
\filldraw[black] (1,-4) circle (2pt);
\filldraw[black] (1,-5) circle (2pt);
\filldraw[black] (1,-7) circle (2pt);
\filldraw[black] (1,-8) circle (2pt);
\filldraw[black] (1,-9) circle (2pt);
\filldraw[black] (1,-11) circle (2pt);
\filldraw[black] (1,-13) circle (2pt);
\filldraw[black] (1,-15) circle (2pt);
\filldraw[black] (1,-17) circle (2pt);
\filldraw[black] (-1,-2) circle (2pt);
\filldraw[black] (-1,-4) circle (2pt);
\filldraw[black] (-1,-5) circle (2pt);
\filldraw[black] (-1,-7) circle (2pt);
\filldraw[black] (-1,-8) circle (2pt);
\filldraw[black] (-1,-9) circle (2pt);
\filldraw[black] (-1,-11) circle (2pt);
\filldraw[black] (-1,-13) circle (2pt);
\filldraw[black] (-1,-15) circle (2pt);
\filldraw[black] (-1,-17) circle (2pt);
\filldraw[black] (2,-5) circle (2pt);
\filldraw[black] (2,-8) circle (2pt);
\filldraw[black] (2,-9) circle (2pt);
\filldraw[black] (2,-12) circle (2pt);
\filldraw[black] (2,-14) circle (2pt);
\filldraw[black] (2,-16) circle (2pt);
\filldraw[black] (-2,-5) circle (2pt);
\filldraw[black] (-2,-8) circle (2pt);
\filldraw[black] (-2,-9) circle (2pt);
\filldraw[black] (-2,-12) circle (2pt);
\filldraw[black] (-2,-14) circle (2pt);
\filldraw[black] (-2,-16) circle (2pt);
\filldraw[black] (3,-9) circle (2pt);
\filldraw[black] (3,-13) circle (2pt);
\filldraw[black] (3,-15) circle (2pt);
\filldraw[black] (-3,-9) circle (2pt);
\filldraw[black] (-3,-13) circle (2pt);
\filldraw[black] (-3,-15) circle (2pt);
\filldraw[black] (4,-14) circle (2pt);
\filldraw[black] (-4,-14) circle (2pt);

\draw[black, thick] (10,0) -- (10,-18);
\draw[black, thick] (11,-2) -- (11,-17);
\draw[black, thick] (9,-2) -- (9,-17);
\draw[black, thick] (12,-5) -- (12,-16);
\draw[black, thick] (8,-5) -- (8,-16);
\draw[black, thick] (13,-9) -- (13,-15);
\draw[black, thick] (7,-9) -- (7,-15);
\draw[black, thick] (10,-1) -- (11,-2);
\draw[black, thick] (10,-1) -- (9,-2);
\draw[black, thick] (10,-3) -- (12,-5);
\draw[black, thick] (10,-4) -- (11,-5);
\draw[black, thick] (10,-3) -- (8,-5);
\draw[black, thick] (10,-4) -- (9,-5);
\draw[black, thick] (10,-6) -- (13,-9);
\draw[black, thick] (10,-7) -- (12,-9);
\draw[black, thick] (10,-8) -- (11,-9);
\draw[black, thick] (10,-6) -- (7,-9);
\draw[black, thick] (10,-7) -- (8,-9);
\draw[black, thick] (10,-8) -- (9,-9);
\draw[black, thick] (10,-10) -- (14,-14);
\draw[black, thick] (10,-12) -- (13,-15);
\draw[black, thick] (10,-14) -- (12,-16);
\draw[black, thick] (10,-16) -- (11,-17);
\draw[black, thick] (10,-10) -- (6,-14);
\draw[black, thick] (10,-12) -- (7,-15);
\draw[black, thick] (10,-14) -- (8,-16);
\draw[black, thick] (10,-16) -- (9,-17);
\draw[black, thick] (11,-2) -- (10,-7);
\draw[black, thick] (11,-4) -- (10,-8);
\draw[black, thick] (11,-5) -- (10,-9);
\draw[black, thick] (11,-8) -- (10,-10);
\draw[black, thick] (11,-9) -- (10,-12);
\draw[black, thick] (11,-11) -- (10,-14);
\draw[black, thick] (11,-13) -- (10,-16);
\draw[black, thick] (11,-15) -- (10,-18);
\draw[black, thick] (9,-2) -- (10,-7);
\draw[black, thick] (9,-4) -- (10,-8);
\draw[black, thick] (9,-5) -- (10,-9);
\draw[black, thick] (9,-8) -- (10,-10);
\draw[black, thick] (9,-9) -- (10,-12);
\draw[black, thick] (9,-11) -- (10,-14);
\draw[black, thick] (9,-13) -- (10,-16);
\draw[black, thick] (9,-15) -- (10,-18);
\draw[black, thick] (12,-5) -- (11,-9);
\draw[black, thick] (12,-8) -- (11,-11);
\draw[black, thick] (12,-9) -- (11,-13);
\draw[black, thick] (12,-12) -- (11,-15);
\draw[black, thick] (12,-14) -- (11,-17);
\draw[black, thick] (8,-5) -- (9,-9);
\draw[black, thick] (8,-8) -- (9,-11);
\draw[black, thick] (8,-9) -- (9,-13);
\draw[black, thick] (8,-12) -- (9,-15);
\draw[black, thick] (8,-14) -- (9,-17);
\draw[black, thick] (13,-9) -- (12,-14);
\draw[black, thick] (13,-13) -- (12,-16);
\draw[black, thick] (7,-9) -- (8,-14);
\draw[black, thick] (7,-13) -- (8,-16);

\filldraw[black] (10,0) circle (2pt);
\filldraw[black] (10,-1) circle (2pt);
\filldraw[black] (10,-2) circle (2pt);
\filldraw[black] (10,-3) circle (2pt);
\filldraw[black] (10,-4) circle (2pt);
\filldraw[black] (10,-5) circle (2pt);
\filldraw[black] (10,-6) circle (2pt);
\filldraw[black] (10,-7) circle (2pt);
\filldraw[black] (10,-8) circle (2pt);
\filldraw[black] (10,-9) circle (2pt);
\filldraw[black] (10,-10) circle (2pt);
\filldraw[black] (10,-12) circle (2pt);
\filldraw[black] (10,-14) circle (2pt);
\filldraw[black] (10,-16) circle (2pt);
\filldraw[black] (10,-18) circle (2pt);
\filldraw[black] (11,-2) circle (2pt);
\filldraw[black] (11,-4) circle (2pt);
\filldraw[black] (11,-5) circle (2pt);
\filldraw[black] (11,-7) circle (2pt);
\filldraw[black] (11,-8) circle (2pt);
\filldraw[black] (11,-9) circle (2pt);
\filldraw[black] (11,-11) circle (2pt);
\filldraw[black] (11,-13) circle (2pt);
\filldraw[black] (11,-15) circle (2pt);
\filldraw[black] (11,-17) circle (2pt);
\filldraw[black] (9,-2) circle (2pt);
\filldraw[black] (9,-4) circle (2pt);
\filldraw[black] (9,-5) circle (2pt);
\filldraw[black] (9,-7) circle (2pt);
\filldraw[black] (9,-8) circle (2pt);
\filldraw[black] (9,-9) circle (2pt);
\filldraw[black] (9,-11) circle (2pt);
\filldraw[black] (9,-13) circle (2pt);
\filldraw[black] (9,-15) circle (2pt);
\filldraw[black] (9,-17) circle (2pt);
\filldraw[black] (12,-5) circle (2pt);
\filldraw[black] (12,-8) circle (2pt);
\filldraw[black] (12,-9) circle (2pt);
\filldraw[black] (12,-12) circle (2pt);
\filldraw[black] (12,-14) circle (2pt);
\filldraw[black] (12,-16) circle (2pt);
\filldraw[black] (8,-5) circle (2pt);
\filldraw[black] (8,-8) circle (2pt);
\filldraw[black] (8,-9) circle (2pt);
\filldraw[black] (8,-12) circle (2pt);
\filldraw[black] (8,-14) circle (2pt);
\filldraw[black] (8,-16) circle (2pt);
\filldraw[black] (13,-9) circle (2pt);
\filldraw[black] (13,-13) circle (2pt);
\filldraw[black] (13,-15) circle (2pt);
\filldraw[black] (7,-9) circle (2pt);
\filldraw[black] (7,-13) circle (2pt);
\filldraw[black] (7,-15) circle (2pt);
\filldraw[black] (14,-14) circle (2pt);
\filldraw[black] (6,-14) circle (2pt);

\end{tikzpicture}
\caption{Firing order poset for $n=10$ (left) and $n=11$ (right).}
  \label{fign10}
\setlength{\belowcaptionskip}{-10pt}
\end{figure}
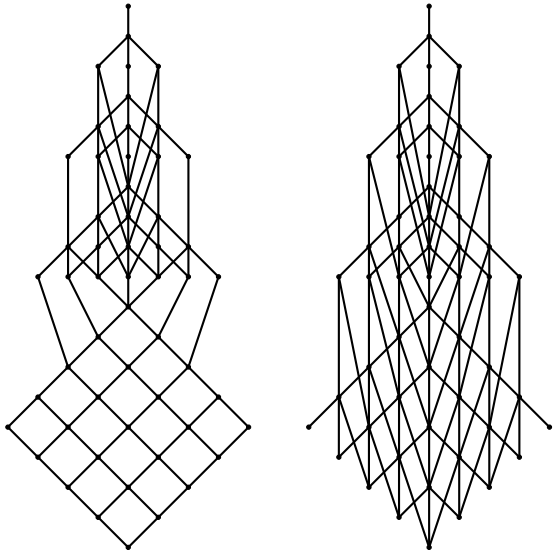

\begin{figure}
  \begin{center}
    \input{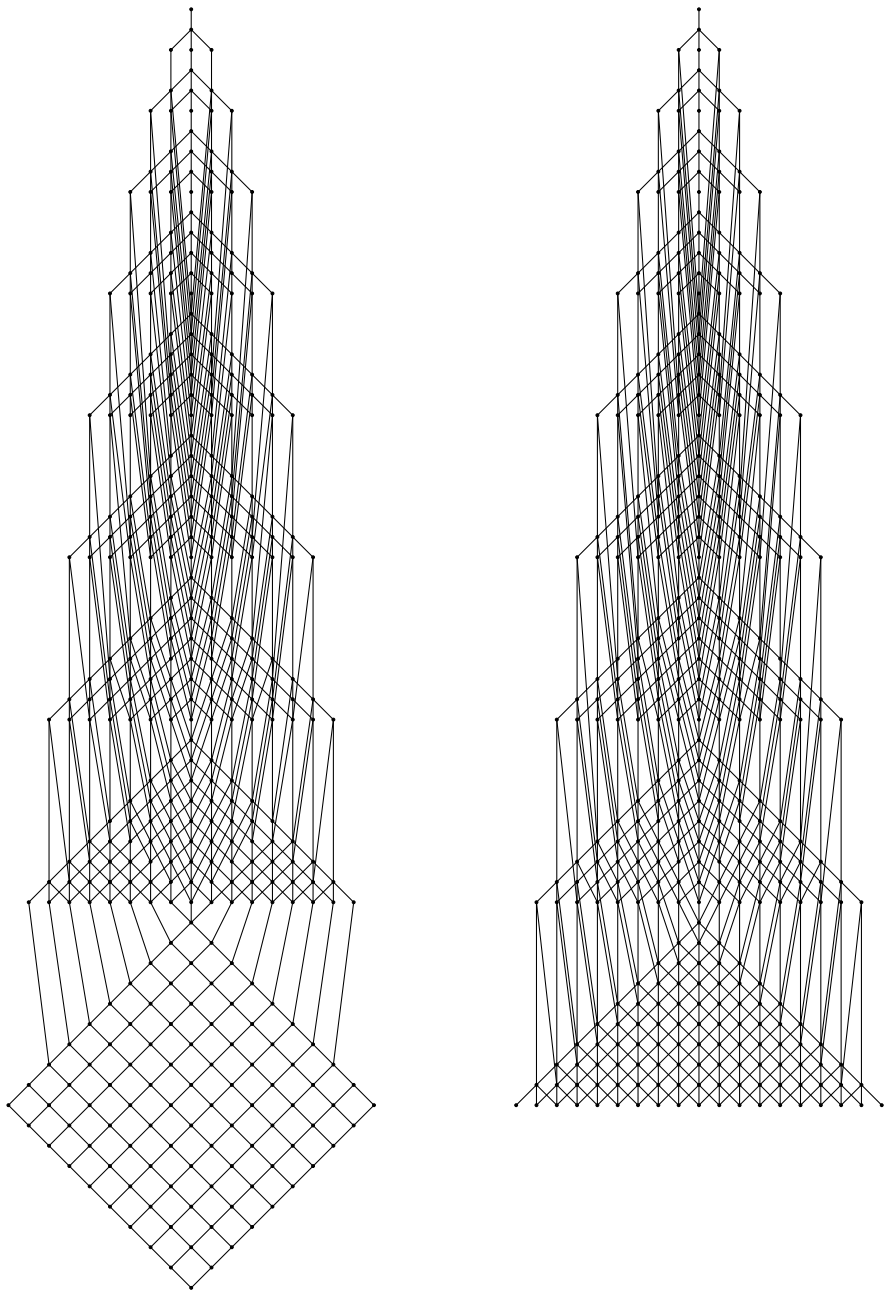}
\end{center}
\caption{Firing order poset for $n=20$ (left) and $n=21$ (right)}
  \label{fign20}
\end{figure}

\section{Sorting}\label{main}

Labeled chip-firing is defined formally as follows.   Consider the infinite path graph (or 1-dimensional grid) on $\mathbb{Z}$, where each integer $i$ is connected by a single undirected edge to both $i-1$ and $i+1$.  Place $n$ chips, labeled from $1$ to $n$, at site 0.  A firing move consists of choosing two chips labeled $a$ and $b$ ($a<b$) at a common site $i$.  Chip $a$ is sent to site $i-1$, while chip $b$ is sent to site $i+1$.  The process is repeated until all chips are at distinct sites, and thus no further firing moves may be performed.  Example~\ref{ex1} below shows a complete firing sequence for $n=4$.

\begin{example}\label{ex1}
We begin with chips labeled 1 through 4 at the origin (the adjacent blank spaces represent the neighboring sites, which initially have no chips).

\begin{center}
\begin{tabular}{ccccc}

 & & 4 & & \\
 & & 3 & & \\
 & & 2 & & \\
\underline{\hspace{0.2 cm}} & \underline{\hspace{0.2 cm}} & \underline{1} & \underline{\hspace{0.2 cm}} & \underline{\hspace{0.2 cm}}

\end{tabular}
\end{center}

We can then choose to fire any pair of chips at the origin.  We choose chips 3 and 4, sending 3 to the left and 4 to the right.

\begin{center}
\begin{tabular}{ccccc}

 & & 2 & & \\
\underline{\hspace{0.2 cm}} & \underline{3} & \underline{1} & \underline{4} & \underline{\hspace{0.2 cm}}

\end{tabular}
\end{center}

We now fire chips 1 and 2:

\begin{center}
\begin{tabular}{ccccc}

 & 1 &  & 2 & \\
\underline{\hspace{0.2 cm}} & \underline{3} & \underline{\hspace{0.2 cm}} & \underline{4} & \underline{\hspace{0.2 cm}}

\end{tabular}
\end{center}

There are now two sites that can fire.  We choose to fire 2 and 4.

\begin{center}
\begin{tabular}{ccccc}

 & 1 & & & \\
\underline{\hspace{0.2 cm}} & \underline{3} & \underline{2} & \underline{\hspace{0.2 cm}} & \underline{4}

\end{tabular}
\end{center}

This leaves two more firing moves:

\begin{center}
\begin{tabular}{ccccc}

 & & 3 & & \\
\underline{1} & \underline{\hspace{0.2 cm}} & \underline{2} & \underline{\hspace{0.2 cm}} & \underline{4}

\end{tabular}
\end{center}

\begin{center}
\begin{tabular}{ccccc}

 & & & & \\
\underline{1} & \underline{2} & \underline{\hspace{0.2 cm}} & \underline{3} & \underline{4}

\end{tabular}
\end{center}

We have no more available firing moves, so we have reached a final configuration.
\end{example}

Note that in Example~\ref{ex1}, the chips are in sorted order from
left to right.  In fact, this system was shown in \cite{HMP16} to
always terminate in a unique final configuration, in which all chips
end in sorted order, as long as the number of chips $n$ is even.

We now proceed to our main result which is a new proof of the sorting property.  Let the total number of chips be $n=2m$ and label the chips $-m,-m+1,\cdots,-1,1,2,\cdots,m$.

\begin{theorem}[{\cite[Theorem 13]{HMP16}}] \label{configuration}
The labeled chip-firing process with $2m$ chips at the origin terminates at the final configuration with a single chip at every position from $-m$ to $-1$, and from $1$ to $m$.  Furthermore, the final position of each chip is equal to its label.
\end{theorem}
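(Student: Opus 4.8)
The plan is to peel off a rigid combinatorial skeleton using the unlabeled process, and then remove the residual freedom with a firing-order poset and a monovariant. First, forgetting labels turns labeled chip-firing into ordinary chip-firing of $2m$ chips at the origin of $\mathbb{Z}$, which is abelian. Hence the process always terminates, the number $N_i$ of firings at site $i$ is the same for every execution, and the terminal configuration has exactly one chip at each position $-m,\dots,-1,1,\dots,m$; solving $N_{i-1}+N_{i+1}-2N_i=1$ (valid for $1\le |i|\le m-1$) together with the analogous relation at the origin gives $N_i=\binom{m-|i|+1}{2}$ for $|i|\le m$. So each execution merely determines a bijection from the label set $\{-m,\dots,-1,1,\dots,m\}$ to the equal position set, and the theorem amounts to: this bijection is always the identity. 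A useful monovariant is $\Phi=\sum_c \ell(c)\,\mathrm{pos}(c)$, summed over chips: firing a pair with labels $a<b$ sends $a$ left and $b$ right, changing $\Phi$ by exactly $b-a>0$, so $\Phi$ strictly increases along any execution, and, over all bijections realizing the fixed terminal position set, the rearrangement inequality makes $\Phi$ maximal precisely at the sorted configuration. Since the label set and the position set coincide, it is moreover enough to prove the threshold form of the statement: for each $k$, the $k$ largest labels land on the $k$ rightmost positions (and, by the left--right symmetry of the setup, the mirror statement).

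The strict increase of $\Phi$ shows it climbs toward the maximum but not that it reaches it, so the real work is to eliminate the remaining slack; this is done via the firing-order poset $P$. Its elements are the (execution-independent) firing events --- ``the $k$-th firing of site $i$'', $1\le k\le N_i$ --- ordered by $e\preceq e'$ iff $e$ precedes $e'$ in every legal execution. One checks that $P$ is a poset whose linear extensions are exactly the firing orders of legal executions, and then --- the structural core --- computes $P$ from the chip-count inequalities governing when a site may fire: $P$ is assembled by gluing ``diamond'' pieces, and when $n=2m$ is even it has a \emph{unique} maximal element, the final firing, which takes place at site $0$ (for $n$ odd the top of $P$ looks quite different, which is the structural reason confluence fails there --- contrast Figures~\ref{fign10} and~\ref{fign20}). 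The uniqueness of this top, and the diamond immediately below it, are what pin the chip choices down: one argues by downward induction along $P$ that the two chips occupying site $0$ at the final firing must be the globally least and greatest chips not yet in final position --- each being the extreme element of the last pair to which it belonged --- so that they are driven into the two outermost still-vacant positions; removing these and passing to the remainder of $P$ continues the induction and forces the entire bijection to be sorted.

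The step I expect to be the main obstacle is precisely this coupling of $P$ to the chip choices. The poset records only \emph{when} sites fire, never \emph{which} pair is chosen, whereas the theorem is entirely about those choices; and since labeled chip-firing is not locally confluent one cannot simply commute adjacent firings (indeed firings at adjacent sites that are incomparable in $P$ really can lead to different available chips). Showing that the rigidity of $P$ --- the unique maximum, the bottom diamond, and the nesting of diamonds above it --- together with $\Phi$ being strictly increasing and maximized exactly at the target configuration, leaves no freedom at all in the chip choices is where the argument is genuinely hard, and it is also the point at which the evenness of $n$ is essential.
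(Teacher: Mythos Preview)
Your framework matches the paper's: the firing-order poset $P$, and for $n=2m$ the rigid diamond at its bottom with the very last firing at site $0$. But the induction you sketch in the middle paragraph does not work, and the monovariant $\Phi$ --- as you yourself note --- cannot close the gap; in the paper it plays no role at all.

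Concretely: at the last firing (site $0$) the two chips go to positions $\pm 1$, so you need them to carry labels $-1$ and $+1$. Your argument that each ``was the extreme element of the last pair to which it belonged'' only says one arrived from site $-1$ as the larger of that pair and one from site $+1$ as the smaller; this places no constraint on what those labels actually are. Peeling off this firing and recursing is circular: knowing that chips $\pm 1$ are the ones at the origin presupposes you already know where every other chip went.

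What the paper supplies instead are two ingredients absent from your sketch. First, it proves (by a chip-count induction over the grid) that \emph{every} firing in the bottom $m\times m$ diamond occurs with exactly two chips present, so any chip sitting at such a site is forced to participate in that firing. Second --- and this is what replaces $\Phi$ --- a short label-induction gives a position bound: chip $k$ (for $k<0$) can never pass site $k+m$, since chip $-m$ is always smallest wherever it sits and hence never moves right of $0$, then chip $-m+1$ is smallest whenever it reaches site $1$, and so on. With these two facts one tracks chip $k$ diagonally across the diamond: just before firing $(x,-k-1)$ it lies at or left of site $x+k+1$; if it is at that site it must fire (only two chips present) and it must be the smaller of the two (all labels below $k$ are already strictly further left by the same lemma one row down), so it moves left. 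Iterating down to $x=0$ lands chip $k$ at position $k$. The diamond structure by itself, without the position bound, does not determine the labels --- that is exactly the coupling you were worried about, and it is resolved by Lemma~\ref{bounds}, not by anything intrinsic to $P$ or $\Phi$.
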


Given our choice of labels, showing that every chip ends up in the position designated by its label is equivalent to showing that the chips end up in sorted order. 

Define a partial order $P$ on the firing moves in the process.  Let

$$k_j=\text{ the }j^{th}\text{ to last firing move at site }k$$

The elements of the poset are ordered according to the relation: 

$$(k_1)_{j_1}\ge (k_2)_{j_2}\text{ if move }(k_1)_{j_1}\text{ must occur before move }(k_2)_{j_2}$$

$\;$\\

\begin{example}
Let $n=10$.  Figure~\ref{bottom} shows a bottom portion of the hasse diagram of $P$.  

\begin{center}
\begin{tikzpicture}\label{bottom}

\draw[black, thick] (0,0) -- (4,4);
\draw[black, thick] (1,-1) -- (5,3);
\draw[black, thick] (2,-2) -- (6,2);
\draw[black, thick] (3,-3) -- (7,1);
\draw[black, thick] (4,-4) -- (8,0);
\draw[black, thick] (0,0) -- (4,-4);
\draw[black, thick] (1,1) -- (5,-3);
\draw[black, thick] (2,2) -- (6,-2);
\draw[black, thick] (3,3) -- (7,-1);
\draw[black, thick] (4,4) -- (8,0);

\draw[red, thick] (6,2) -- (7,3);
\draw[red, thick] (5,3) -- (6,4);
\draw[red, thick] (4,4) -- (5,5);
\draw[red, thick] (4,4) -- (3,5);
\draw[red, thick] (3,3) -- (2,4);
\draw[red, thick] (2,2) -- (1,3);

\filldraw[black] (0,0) circle (2pt) node[right, gray] {$-4_1$} node[left] {$(0,4)$};
\filldraw[black] (1,-1) circle (2pt) node[right, gray] {$-3_1$} node[left] {$(0,3)$};
\filldraw[black] (1,1) circle (2pt) node[right, gray] {$-3_2$} node[left] {$(1,4)$};
\filldraw[black] (2,-2) circle (2pt) node[right, gray] {$-2_1$} node[left] {$(0,2)$};
\filldraw[black] (2,0) circle (2pt) node[right, gray] {$-2_2$} node[left] {$(1,3)$};
\filldraw[black] (2,2) circle (2pt) node[right, gray] {$-2_3$} node[left] {$(2,4)$};
\filldraw[black] (3,-3) circle (2pt) node[right, gray] {$-1_1$} node[left] {$(0,1)$};
\filldraw[black] (3,-1) circle (2pt) node[right, gray] {$-1_2$} node[left] {$(1,2)$};
\filldraw[black] (3,1) circle (2pt) node[right, gray] {$-1_3$} node[left] {$(2,3)$};
\filldraw[black] (3,3) circle (2pt) node[right, gray] {$-1_4$} node[left] {$(3,4)$};
\filldraw[black] (4,-4) circle (2pt) node[right, gray] {$0_1$} node[left] {$(0,0)$};
\filldraw[black] (4,-2) circle (2pt) node[right, gray] {$0_2$} node[left] {$(1,1)$};
\filldraw[black] (4,0) circle (2pt) node[right, gray] {$0_3$} node[left] {$(2,2)$};
\filldraw[black] (4,2) circle (2pt) node[right, gray] {$0_4$} node[left] {$(3,3)$};
\filldraw[black] (4,4) circle (2pt) node[right, gray] {$0_5$} node[left] {$(4,4)$};
\filldraw[black] (5,-3) circle (2pt) node[right, gray] {$1_1$} node[left] {$(1,0)$};
\filldraw[black] (5,-1) circle (2pt) node[right, gray] {$1_2$} node[left] {$(2,1)$};
\filldraw[black] (5,1) circle (2pt) node[right, gray] {$1_3$} node[left] {$(3,2)$};
\filldraw[black] (5,3) circle (2pt) node[right, gray] {$1_4$} node[left] {$(4,3)$};
\filldraw[black] (6,-2) circle (2pt) node[right, gray] {$2_1$} node[left] {$(2,0)$};
\filldraw[black] (6,0) circle (2pt) node[right, gray] {$2_2$} node[left] {$(3,1)$};
\filldraw[black] (6,2) circle (2pt) node[right, gray] {$2_3$} node[left] {$(4,2)$};
\filldraw[black] (7,-1) circle (2pt) node[right, gray] {$3_1$} node[left] {$(3,0)$};
\filldraw[black] (7,1) circle (2pt) node[right, gray] {$3_2$} node[left] {$(4,1)$};
\filldraw[black] (8,0) circle (2pt) node[right, gray] {$4_1$} node[left] {$(4,0)$};

\filldraw[fill=red,text=gray] (1,3) circle (2pt) node[right] {$-3_3$} node[left, black] {$(2,5)$};
\filldraw[fill=red,text=gray] (2,4) circle (2pt) node[right] {$-2_4$} node[left, black] {$(3,5)$};
\filldraw[fill=red,text=gray] (3,5) circle (2pt) node[right] {$-1_5$} node[left, black] {$(4,5)$};
\filldraw[fill=red,text=gray] (5,5) circle (2pt) node[right] {$1_5$} node[left, black] {$(5,4)$};
\filldraw[fill=red,text=gray] (6,4) circle (2pt) node[right] {$2_4$} node[left, black] {$(5,3)$};
\filldraw[fill=red,text=gray] (7,3) circle (2pt) node[right] {$3_3$} node[left, black] {$(5,2)$};

\end{tikzpicture}


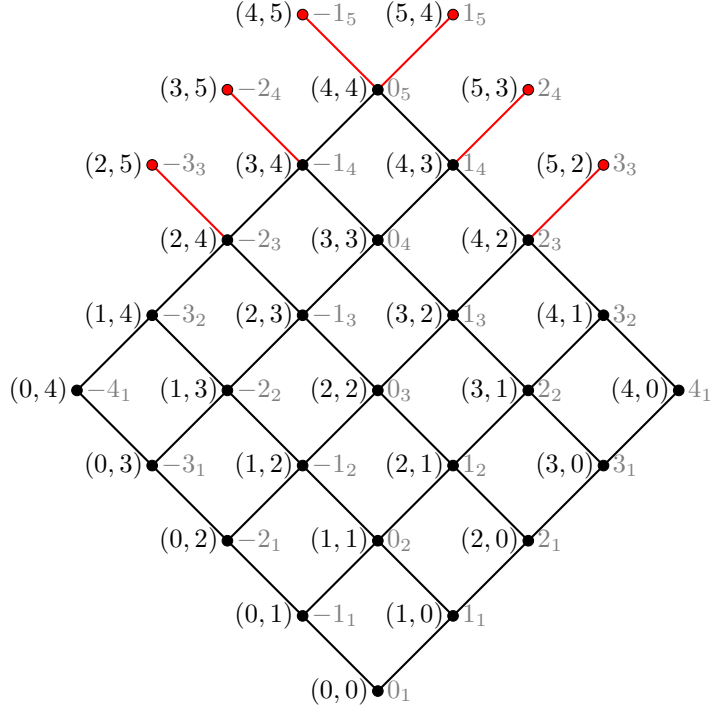
\captionof{figure}{Bottom of the firing move poset, $n=10$}

\end{center}

\end{example}

The proof of Theorem~\ref{configuration} consists of three main steps:

\begin{enumerate}

\item Prove the last moves in the process follow a locally confluent grid structure.

\item Bound the positions that chips can reach throughout the firing process.

\item Combine (1) and (2) to uniquely constrain a chip's final location.

\end{enumerate}

Our first goal is to show that the bottom portion of the Hasse diagram of $P$ always has the above grid structure.  For general $n$, the diagram will consist of an $m$ by $m$ black diamond.  At all but the two leftmost and two rightmost vertices at the top of the diamond, there is an additional edge oriented away from the diamond, drawn in red in the Figure.  The coordinates $(0,0)$ are assigned to the bottom vertex in the diamond.  A move of one unit up and to the right corresponds to an increase of 1 in the first coordinate, while a move of one unit up and to the left corresponds to an increase of 1 in the second coordinate.

Each column corresponds to a site, with the last move at that site appearing on the bottom border of the diamond, and with earlier moves moving up the diamond.  
We will often use ``firing move $(x,y)$'' or simply ``$(x,y)$'' to refer to the firing move that aligns with coordinates $(x,y)$ in the diagram.

 We  make repeated use of the following results from \cite{ALSSTW89}, see also \cite[Chapter 5]{Klivans18}:

\begin{theorem}\label{UnlabeledConfiguration}
The chip-firing process with $2m$ chips at the origin terminates at the final configuration with a single chip at every position from $-m$ to $-1$, and from $1$ to $m$.
\end{theorem}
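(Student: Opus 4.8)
The plan is to deduce Theorem~\ref{UnlabeledConfiguration} from the \emph{abelian property} of unlabeled chip-firing. As recalled in the introduction, unlabeled chip-firing is locally confluent (any two legal fires commute), so the process started from $2m$ chips at the origin is globally confluent: whether it halts, the terminal configuration, and the firing vector (odometer) $u\colon\mathbb Z\to\mathbb Z_{\ge0}$ recording how many times each site fires are all independent of the order in which fires are performed. Consequently it is enough to exhibit one legal, finite firing sequence ending in the configuration described in the statement; halting and uniqueness then come for free. (Finiteness of the process is itself classical --- a finite configuration on $\mathbb Z$ cannot fire forever --- but it also follows a posteriori once such a sequence is produced.)

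To find the sequence I would first guess the firing vector and check it algebraically. If $c_0=2m\,\delta_0$ is the initial configuration and $u\ge0$ is realized by a legal sequence, the resulting configuration is $c_0-\Delta u$, where $\Delta u(i)=2u(i)-u(i-1)-u(i+1)$ is the Laplacian of the path. The candidate is the ``stacked triangle''
\[
 u(i)=\binom{m-|i|+1}{2}\ \ (|i|\le m),\qquad u(i)=0\ \ (|i|>m).
\]
Since the second difference of $u$ equals $1$ on $\{1,\dots,m\}$, since $u(0)-u(1)=m$, and since $u$ vanishes for $|i|\ge m$, one computes $\Delta u(0)=2m$, $\Delta u(i)=-1$ for $1\le|i|\le m$, and $\Delta u(i)=0$ otherwise; hence $c_0-\Delta u$ has no chip at the origin, exactly one chip at each of $\pm1,\dots,\pm m$, and none elsewhere. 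This is the asserted terminal configuration, and it is stable, so it only remains to see that this $u$ is genuinely \emph{realizable}.

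For realizability I would run synchronous (parallel) chip-firing from $2m\,\delta_0$ and verify by induction on the round that the fired region grows in a predictable triangular pattern, firing site $i$ a total of $u(i)$ times before stabilizing; alternatively one can invoke the least-action principle for chip-firing, namely that any nonnegative $w$ with $c_0-\Delta w$ stable dominates the odometer, and then check equality. This realizability step is the only genuinely delicate point --- the Laplacian computation and the appeal to confluence are routine bookkeeping. A clean inductive alternative avoiding the explicit formula: by the abelian property, first perform the fires that stabilize $2(m-1)$ chips (legal here by monotonicity, since there are now more chips than in the $m-1$ case), reaching two chips at the origin and one at each of $\pm1,\dots,\pm(m-1)$; then an explicit avalanche of $m^{2}$ further fires --- firing site $i$ a further $\max(m-|i|,0)$ times --- carries the two spare chips out to $\pm m$, giving the result by induction on $m$.
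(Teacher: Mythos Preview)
The paper does not prove this statement at all: it is quoted, together with Theorem~\ref{FiringNums}, as a known result from \cite{ALSSTW89} (with a pointer to \cite{Klivans18}) and used as a black box in the subsequent arguments. So there is no ``paper's own proof'' to compare your attempt against.

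That said, your proposal is a correct and standard way to establish the result. The Laplacian computation with $u(i)=\binom{m-|i|+1}{2}$ is right and indeed recovers the asserted stable configuration (and simultaneously yields Theorem~\ref{FiringNums}). Your two suggested routes to realizability are both viable: the least-action principle gives it immediately once you have checked stability, while the inductive argument is self-contained. In the inductive step, the appeal to monotonicity (the $2(m-1)$-chip firing sequence remains legal when two extra chips sit at the origin) is legitimate but deserves a one-line justification; and the final ``avalanche'' firing each site $i$ an additional $\max(m-|i|,0)$ times should be accompanied by an explicit legal order (e.g.\ the outward-propagating wave $0;\pm1;0,\pm2;\pm1,\pm3;\dots$), since you flagged realizability as the delicate point. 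With those small additions the argument is complete.
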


\begin{theorem}\label{FiringNums}
Over the course of the chip-firing process with $2m$ chips at the origin, the number of firing moves at site $k$ is ${m-|k|+1 \choose 2}$ for $-m \le k \le m$.
\end{theorem}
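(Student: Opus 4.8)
The plan is to pass to the \emph{odometer} of the process and use the standard linear relation between it and the net change in chip configuration. Let $u(k)$ denote the total number of firing moves performed at site $k$ over the course of the process; by the abelian property of unlabeled chip-firing (see \cite{ALSSTW89} or \cite[Chapter 5]{Klivans18}) this number does not depend on the order in which moves are chosen, and since the process terminates, $u$ is finitely supported on $\mathbb{Z}$. A single firing at $k$ deletes two chips from $k$ and adds one to each of $k-1$ and $k+1$, so summing over all moves, the terminal configuration $\sigma_\infty$ and the initial configuration $\sigma_0$ satisfy
\[
\sigma_\infty(k) = \sigma_0(k) + u(k-1) - 2u(k) + u(k+1) = \sigma_0(k) + (\Delta u)(k),
\]
where $(\Delta u)(k) = u(k-1)-2u(k)+u(k+1)$ is the discrete Laplacian. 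Here $\sigma_0(0)=2m$ and $\sigma_0(k)=0$ for $k\neq 0$, while Theorem~\ref{UnlabeledConfiguration} gives $\sigma_\infty(k)=1$ for $1\le |k|\le m$ and $\sigma_\infty(k)=0$ otherwise. Thus $\Delta u$ is a completely explicit function, and it remains to solve for $u$.

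Next I would guess and check the answer. Put $f(k)=\binom{m-|k|+1}{2}$ for $|k|\le m$ and $f(k)=0$ for $|k|>m$ (with the convention $\binom{n}{2}=0$ for $n\le 1$ the two formulas agree at $|k|=m$, so this is just the single formula $\binom{m-|k|+1}{2}$ truncated to its support); it is nonnegative, symmetric, and finitely supported. A direct computation shows $\Delta f = \sigma_\infty - \sigma_0$ at every site: for $1\le|k|\le m-1$, writing $j=m-|k|\ge 1$, one gets $(\Delta f)(k)=\binom{j}{2}-2\binom{j+1}{2}+\binom{j+2}{2}=1$ (this is the second difference of the quadratic $x\mapsto\binom{x}{2}$); for $k=0$, $(\Delta f)(0)=2\binom{m}{2}-2\binom{m+1}{2}=-2m$; for $|k|=m$, $(\Delta f)(k)=f\bigl(|k|-1\bigr)=\binom{2}{2}=1$; and for $|k|>m$, $(\Delta f)(k)=0$. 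In each case this matches $\sigma_\infty(k)-\sigma_0(k)$.

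Finally, uniqueness finishes the proof. The difference $g=u-f$ is finitely supported and satisfies $\Delta g=0$; a harmonic function on $\mathbb{Z}$ has constant consecutive differences, hence is affine, and the only finitely supported affine function is identically $0$. Therefore $u=f$, which is exactly the claimed count $\binom{m-|k|+1}{2}$ (and in particular shows no site outside $[-(m-1),m-1]$ ever fires).

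The argument has no genuinely difficult step: the only thing requiring care is the boundary bookkeeping in the Laplacian computation (the sites $0$, $\pm(m-1)$, $\pm m$, and those just outside $[-m,m]$), and the two external facts being used---the abelian property and the explicit terminal configuration of Theorem~\ref{UnlabeledConfiguration}---are precisely what make $\Delta u$ explicit and $u$ finitely supported. If one wished to avoid invoking Theorem~\ref{UnlabeledConfiguration}, the same scheme works by induction on $m$, peeling off the last ``wave'' of firings, but using the terminal configuration directly is shorter.
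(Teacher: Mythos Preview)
Your proof is correct. The paper does not actually prove this statement; it quotes Theorems~\ref{UnlabeledConfiguration} and~\ref{FiringNums} as background results from \cite{ALSSTW89} (see also \cite[Chapter~5]{Klivans18}) and uses them without argument. Your odometer/discrete-Laplacian computation, together with the uniqueness of finitely supported harmonic functions on $\mathbb{Z}$, is exactly the standard way such firing counts are derived, and your boundary checks at $k=0$, $|k|=m$, and $|k|>m$ are all in order. So there is nothing to compare against in the paper beyond noting that your argument supplies what the paper merely cites.
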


The proof of the main theorem begins with the following lemma:

\begin{lemma}[Grid Structure]\label{diamond}

Let $(x,y)$ be a firing move such that $0 \le x,y \le m-1$.  Then the following conditions must hold:

1)  $(x,y) \leq (x+1, y)$ and $(x,y) \leq (x,y+1)$, i.e. 
the  firing move $(x,y)$ must take place after the moves $(x+1,y)$ and $(x,y+1)$, if such firing moves exist.

2) When the firing move $(x,y)$ occurs, there are exactly $2$ chips present at the site that is firing.

\end{lemma}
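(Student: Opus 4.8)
The first move is to pass from chips to fire–counts. Because unlabeled chip–firing is abelian, the configuration at any moment is determined by how many times each site has fired; writing $u_k$ for the number of fires still pending at site $k$, and using Theorems~\ref{UnlabeledConfiguration} and~\ref{FiringNums} to substitute the total fire–count $f_k=\binom{m-|k|+1}{2}$ and the terminal chip–count $e_k$ (with $e_k=1$ for $1\le|k|\le m$ and $e_k=0$ otherwise), one obtains the bookkeeping identity: at any time site $k$ holds $e_k-u_{k-1}-u_{k+1}+2u_k$ chips. Under this identity the move $(x,y)=s_j$ (the $j$-th–from–last fire at site $s=x-y$, where $j=\min(x,y)+1\le m-|s|$) ``firing with exactly $2$ chips'' becomes the equation $u_{s-1}+u_{s+1}=e_s+2j-2$ at the instant that fire is made, while the relations $(x,y)\le(x+1,y)$ and $(x,y)\le(x,y+1)$ become assertions that certain inequalities among the $u_k$ never occur.

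\textbf{Anchoring at $(0,0)$.} Site $0$ finishes empty, and after its final fire it can only gain chips (it gains one each time site $\pm1$ fires). Hence that final fire must leave it with $0$ chips, i.e.\ it fires with exactly $2$; and then a domino argument — if site $k$ never fires again then neither can site $k+1$, since firing $k+1$ would put a chip back onto $k$ — shows that nothing fires after it. So $0_1$ is the last move, it fires with exactly $2$ chips, and it occurs after its only diamond neighbours $(1,0)=1_1$ and $(0,1)=(-1)_1$; this settles the $(0,0)$ case.

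\textbf{Propagating outward.} The plan is then to establish the two order relations and the $2$-chip property for every diamond move by an induction that works outward from $0_1$. The $2$-chip \emph{lower} bound is free (any firing site has at least $2$ chips), so the entire content is the $2$-chip \emph{upper} bound, which by the displayed equation says that when one of the last $m-|s|$ fires at site $s$ is performed, the neighbours $s\pm1$ have correspondingly few pending fires. I would prove this together with the order relations as joint invariants of the odometer $(u_k)$ maintained throughout the process: the governing phenomenon is that chips propagate outward in order, which forces $(u_k)$ to stay unimodal (peaked at $0$) and to be ``staircase–like'' near any site where few fires remain, and the order relations and the $2$-chip bound feed each other — knowing, say, that $(s+1)_j$ fires with only $2$ chips is precisely what rules out the configurations that would let $s_j$ precede $(s+1)_j$. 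I expect the main obstacle to be making this coupled induction close cleanly: in particular the diamond moves are \emph{not} in general the last $m^2$ moves of a firing sequence (an earlier fire farther out can be interleaved late, alongside pending diamond moves near the center), so one cannot simply peel an order ideal off the top of the sequence; one must instead control, for each site $s$ and each of its final $m-|s|$ fires, the exact number of fires still pending at $s\pm1$, and that is the technical heart of the argument.
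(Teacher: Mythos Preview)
Your odometer formulation is correct and clean: the identity $\#\{\text{chips at }k\}=e_k+2u_k-u_{k-1}-u_{k+1}$ is exactly the right bookkeeping, and your anchoring at $(0,0)$ works (site $0$ ends empty, so its last fire leaves $0$ chips, hence fires with exactly $2$; and since any subsequent fire at $\pm1$ would deposit a chip at $0$, the last fires at $\pm1$ precede $0_1$). Your domino statement ``if $k$ never fires again then neither can $k+1$'' is not literally true in general, but you only need the $k=0$ instance, which is fine.

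The gap is the ``Propagating outward'' paragraph: you have not given an inductive step, only a description of what invariants you hope to maintain (unimodality, staircase shape) together with an explicit admission that closing the induction is the hard part. That hope is not what the paper does, and it is not clear it can be made to work without essentially rediscovering the paper's argument. The paper's induction is much more local: it inducts on $x+y$ (rows of the diamond, from the bottom up), and the inductive step uses only the $1\times1$ square immediately below $(x,y)$. Assuming the four edges among $(x,y),(x-1,y),(x,y-1),(x-1,y-1)$ are already in place and the three lower vertices each fire with exactly $2$ chips, look at site $s=x-y$ between the instants of move $(x,y)$ and move $(x-1,y-1)$: it loses $2$ chips from $(x,y)$ and gains at least $2$ from $(x-1,y)$ and $(x,y-1)$, so since it has exactly $2$ chips at the later instant it had at most (hence exactly) $2$ at the earlier instant, and equality forces that \emph{no other} neighbour fires occurred in between, which is precisely the statement that $(x+1,y)$ and $(x,y+1)$ already happened. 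Before this main step the paper separately handles the two bottom edges of the diamond (the last fires $s_1$ for $s=0,\pm1,\pm2,\ldots$) by a direct count analogous to your $(0,0)$ case.

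Your worry that diamond moves need not be the final $m^2$ moves of the sequence is real but irrelevant to this argument: the induction never peels off a terminal segment, it only compares two specific instants at one site. In your odometer language, the inductive step is simply: knowing $u_{s-1}+u_{s+1}=e_s+2(j-1)-2$ at the instant $u_s$ drops to $j-1$, and knowing that between then and the instant $u_s$ dropped to $j$ each neighbour fired at most once, forces $u_{s-1}+u_{s+1}=e_s+2j-2$ at that earlier instant. That is the missing piece.
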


\begin{proof}

We first show that move $(0,0)$ (the last move at site 0) must take place after moves $(1,0)$ and $(0,1)$ (the last moves at sites $\pm 1$).  By Theorem~\ref{FiringNums}, site 0 fires $\frac{m(m+1)}{2}$ times in total, while sites 1 and -1 each fire $\frac{(m-1)m}{2}$ times in total.  Prior to the last fire at site $0$,  it has already lost $2\cdot (\frac{m(m+1)}{2}-1)$ chips due to firing.  In order to fire again, it needs to have 2 chips available, and since it started with $2m$ chips, this means that it must have gained at least $2+2\cdot (\frac{m(m+1)}{2}-1)-2m=(m-1)m$ chips from its neighbors.  However, this is equal to the total number of firing moves at sites 1 and -1, so all firing moves at those sites must occur prior to move $(0,0)$.  In addition, move $(0,0)$  must take place with only $2$ chips present.

\begin{center}
\begin{tikzpicture}[scale=0.9]

\draw[black, thick, dotted] (-5,-1) -- (-4,0);
\draw[black, thick, dotted] (-5,-1) -- (-6,0);

\filldraw[black] (-5,-1) circle (2pt) node[right] {$0_1$} node[left, gray] {$(0,0)$};
\filldraw[black] (-4,0) circle (2pt) node[right] {$1_1$} node[left, gray] {$(1,0)$};
\filldraw[black] (-6,0) circle (2pt) node[right] {$-1_1$} node[left, gray] {$(0,1)$};

\draw[black, thick, dotted] (0,-.5) -- (1,.5);
\draw[black, thick, dotted] (0,-.5) -- (-1,.5);
\draw[black, thick] (0,-.5) -- (-1,-1.5);

\filldraw[black] (0,-.5) circle (2pt) node[right] {$x_1$} node[left, gray] {$(x,0)$};
\filldraw[black] (1,.5) circle (2pt) node[right] {$(x+1)_1$} node[above, gray] {$(x+1,0)$};
\filldraw[black] (-1,.5) circle (2pt) node[right] {$(x-1)_2$} node[left, gray] {$(x,1)$};
\filldraw[black] (-1,-1.5) circle (2pt) node[right] {$(x-1)_1$} node [left, gray] {$(x-1,1)$};

\draw[black, thick] (0,-8) -- (4,-4);
\draw[black, thick] (0,-8) -- (-4,-4);
\draw[black, thick] (1,-7) -- (0,-6);
\draw[black, thick] (2,-6) -- (1,-5);
\draw[black, thick] (3,-5) -- (2,-4);
\draw[black, thick] (4,-4) -- (3,-3);
\draw[black, thick] (-1,-7) -- (0,-6);
\draw[black, thick] (-2,-6) -- (-1,-5);
\draw[black, thick] (-3,-5) -- (-2,-4);
\draw[black, thick] (-4,-4) -- (-3,-3);

\filldraw[black] (0,-8) circle (2pt) node[right] {$0_1$} node[left, gray] {$(0,0)$};
\filldraw[black] (1,-7) circle (2pt) node[right] {$1_1$} node[left, gray] {$(1,0)$};
\filldraw[black] (2,-6) circle (2pt) node[right] {$2_1$} node[left, gray] {$(2,0)$};
\filldraw[black] (3,-5) circle (2pt) node[right] {$3_1$} node[left, gray] {$(3,0)$};
\filldraw[black] (4,-4) circle (2pt) node[right] {$4_1$} node[left, gray] {$(4,0)$};
\filldraw[black] (-1,-7) circle (2pt) node[right] {$-1_1$} node[left, gray] {$(0,1)$};
\filldraw[black] (-2,-6) circle (2pt) node[right] {$-2_1$} node[left, gray] {$(0,2)$};
\filldraw[black] (-3,-5) circle (2pt) node[right] {$-3_1$} node[left, gray] {$(0,3)$};
\filldraw[black] (-4,-4) circle (2pt) node[right] {$-4_1$} node[left, gray] {$(0,4)$};
\filldraw[black] (0,-6) circle (2pt) node[right] {$0_2$} node[left, gray] {$(1,1)$};
\filldraw[black] (1,-5) circle (2pt) node[right] {$1_2$} node[left, gray] {$(2,1)$};
\filldraw[black] (2,-4) circle (2pt) node[right] {$2_2$} node[left, gray] {$(3,1)$};
\filldraw[black] (3,-3) circle (2pt) node[right] {$3_2$} node[left, gray] {$(4,1)$};
\filldraw[black] (-1,-5) circle (2pt) node[right] {$-1_2$} node[left, gray] {$(1,2)$};
\filldraw[black] (-2,-4) circle (2pt) node[right] {$-2_2$} node[left, gray] {$(1,3)$};
\filldraw[black] (-3,-3) circle (2pt) node[right] {$-3_2$} node[left, gray] {$(1,4)$};

\draw[black, thick, dotted] (5,0) -- (6,1);
\draw[black, thick, dotted] (5,0) -- (4,1);
\draw[black, thick] (5,0) -- (4,-1);
\draw[black, thick] (5,0) -- (6,-1);
\draw[black, thick] (4,-1) -- (5,-2);
\draw[black, thick] (6,-1) -- (5,-2);

\filldraw[black] (5,0) circle (2pt) node[left] {$(x,y)$};
\filldraw[black] (6,1) circle (2pt) node[left] {$(x+1,y)$};
\filldraw[black] (4,1) circle (2pt) node[left] {$(x,y+1)$};
\filldraw[black] (4,-1) circle (2pt) node [left] {$(x-1,y)$};
\filldraw[black] (6,-1) circle (2pt) node [left] {$(x,y-1)$};
\filldraw[black] (5,-2) circle (2pt) node [left] {$(x-1,y-1)$};

\end{tikzpicture}


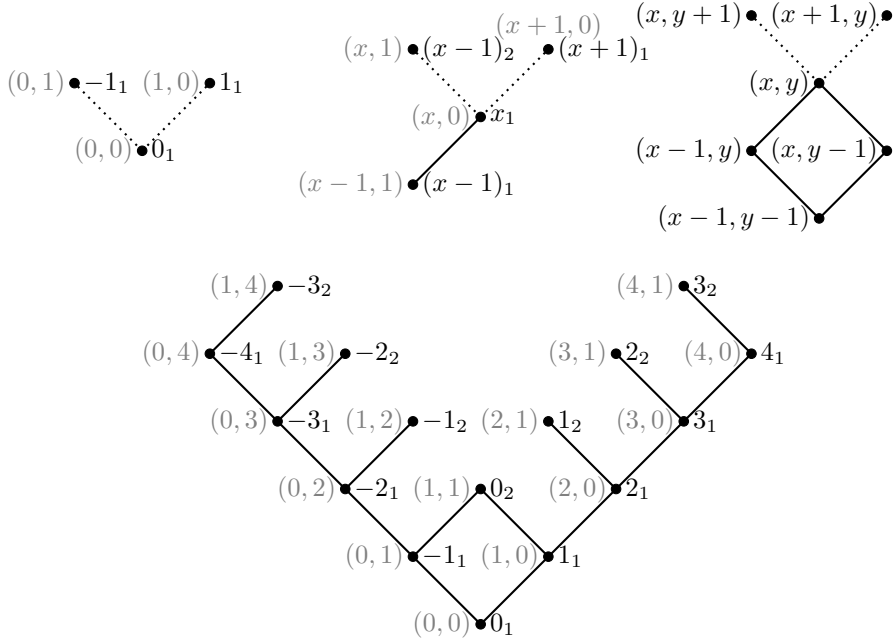
\captionof{figure}{Top left: The base case.  The last move at site 0 must take place after the last moves at sites 1 and -1.  Top middle: Inductive step for the last firing moves at each site. 
  Bottom: The edges that we've shown to exist after the completion of the above step for the case $n=10$.  Note the completed 1 by 1 diamond at the bottom, which we use for the main inductive step.  Top right: The main inductive step.  We assume the presence of the four edges directly below $(x,y)$ in the diagram and prove the existence of the two edges directly above those coordinates.}

\end{center}

Assume that the move $(0,x)$ (the last move at position $x$) must occur before move $(0,x-1)$ (the last move at $x-1$), for some $x>0$.  We want to show that $(0,x)$ takes place after $(0,x+1)$ (the last move at $x+1$, if it exists) and $(1,x)$ (the second to last move at $x-1$).  Site $x$ fires $\frac{(m-x)(m-x+1)}{2}$ times, so it has lost $(m-x)(m-x+1)-2$ chips due to firing before the last move occurs.  Its two neighbors fire a total of $\frac{(m-x+1)(m-x+2)}{2}+\frac{(m-x-1)(m-x)}{2}$ times, of which one is known to happen after the last move at site $x$.  As a result, site $x$ may have received at most $\frac{(m-x+1)(m-x+2)}{2}+\frac{(m-x-1)(m-x)}{2}-1=(m-x+1)(m-x)$ chips from its neighbors prior to its last firing move.  Thus, all firing moves at its neighbors need to occur before $x$ can fire for the final time with exactly 2 chips present.

We now move onto the main inductive step.  Induct on $x+y$, or equivalently, on the rows of Figure 1.  For fixed $k$ and all $x,y>0$ with $x+y=k$,  assume that move $(x,y)$ must take place before move $(x-1,y)$ and $(x,y-1)$, which both take place before move $(x-1,y-1)$.  Also assume that moves $(x-1,y)$, $(x,y-1)$, and $(x-1,y-1)$ all occur with 2 chips present.  This is illustrated in Figure 4.

Between the time that move $(x,y)$ is about to occur and the time that move $(x-1,y-1)$ is about to occur, site $x-y$ must lose 2 chips due to firing move $(x,y)$ and gain at least 2 chips due to moves $(x-1,y)$ and $(x,y-1)$.  Since there are 2 chips present when move $(x-1,y-1)$ occurs, this means that there are at most (and thus exactly) 2 chips present when move $(x,y)$ occurs.  Furthermore, no other moves may take place at neighboring sites $x-y-1$ and $x-y+1$ between the moves $(x,y)$ and $(x-1,y-1)$.  In particular, the moves $(x+1,y)$ and $(x,y+1)$ must take place before $(x,y)$ in order for this to be possible.
\end{proof}

\begin{remark}

It is not immediately clear why this same argument cannot be extended to show that all firing moves in the entire chip firing process must follow this grid structure.  The reason is that our inductive step breaks down at the left and right corners of the diamond.  There are no firing moves at sites $\pm m$, and there is only one each at $\pm (m-1)$.

We see that because there is one fewer firing move that must occur directly after, say, the move $(3,m)$, this gives more freedom to what can happen at that firing move.  It allows the move to take place either with 3 chips present, or before all other firing moves at neighboring sites have occurred.  This effect cascades upward, and ultimately means that all other firing moves in the process (other than the first two at site 0), can have a wider range of numbers of chips present, preventing this structure from applying elsewhere.  For the rest of the Hasse diagram for $n=10$, as well as for the odd $n=11$, see Figure \ref{fign10}.

\end{remark}

Next, we consider bounds on the locations of chips with given labels.  The following is a weaker bound than the one provided in \cite{HMP16}.  Having proved the existence of the grid structure, we do not need as tight bounds as in \cite{HMP16}.

\begin{lemma}[Position Bounds]\label{bounds}

The position of chip $k$ $(k<0)$ must never exceed $k+m$ at any point in the chip firing process.  Similarly, the position of chip $k$ $(k>0)$ must never be less than $k-m$ at any point in the chip firing process.

\end{lemma}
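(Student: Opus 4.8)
The plan is to reduce the two statements to one via a symmetry, and then prove the bound on negative labels by a short induction on the label. For the symmetry: the process is invariant under the involution that negates every chip label and reflects the line through the origin ($p \mapsto -p$), since a firing move at site $i$ with chips $a<b$ becomes, after applying the involution, a legal firing move at site $-i$ with chips $-b<-a$, and the destinations match because $i\pm 1 \mapsto -i\mp 1$. The initial configuration (all chips at the origin) is fixed by this involution, so it is enough to prove the first statement; the second then follows by applying the first to the reflected process.

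So fix $m$ and work with negative labels. I would prove, by induction on $j \in \{-m, -m+1, \dots, -1\}$, the claim $C(j)$: \emph{at no point in the process does a chip with label $\le j$ occupy a position exceeding $j+m$}. The lemma is then the instance $j=k$, since chip $k$ has label $\le k$. For the base case $j=-m$, the only chip with label $\le -m$ is chip $-m$; carrying the minimum label, it is always the smaller of the two chips in any firing move it joins, so it only ever moves left, and, starting at $0 = -m+m$, it never exceeds $0$.

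For the inductive step, assume $C(j-1)$. Every chip with label $\le j$ other than chip $j$ itself already has label $\le j-1$, so by $C(j-1)$ it never exceeds $j+m-1 < j+m$. It remains to bound chip $j$, which starts at $0 \le j+m$. The key observation is that chip $j$ moves to the right only when it is fired together with a chip of strictly smaller label sharing its site (that smaller chip being the one sent left). Any such chip has label $\le j-1$, so by $C(j-1)$ it is never at a position exceeding $j+m-1$, in particular never at position $j+m$. Hence chip $j$ can never execute a rightward move from position $j+m$, and since each firing move changes a chip's position by exactly $\pm 1$, chip $j$ can never pass from $\le j+m$ to $> j+m$. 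This establishes $C(j)$ and completes the induction.

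I do not anticipate a genuine obstacle: the argument is elementary and, in contrast to Lemma~\ref{diamond}, uses neither Theorem~\ref{UnlabeledConfiguration} nor Theorem~\ref{FiringNums}. The only points needing care are the verification of the symmetry above and the bookkeeping of which of the two fired chips moves in which direction. The content of the induction is the intuition that $j+m$ is exactly the number of chips with label strictly less than $j$, and that the only mechanism capable of driving chip $j$ rightward is its repeatedly swapping positions with those smaller-labeled chips.
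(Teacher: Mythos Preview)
Your proof is correct and follows essentially the same strong-induction-on-label argument as the paper: the smallest chip only ever moves left, and each subsequent chip cannot move right from its bound position because no smaller-labeled chip can be there by the inductive hypothesis. Your formulation via the auxiliary statement $C(j)$ and the explicit verification of the label-and-position-negating symmetry are a bit more careful than the paper's writeup, but the underlying idea is identical.
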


\begin{proof}

We proceed by strong induction on $k<0$.  $k=-m$ is the smallest label that a chip can have.  Thus, at no point may the position of this chip increase from its initial position of 0.  As a result, the position of $-m$ may never exceed $-m+m=0$.

Suppose that for all chips with labels less than $k$ ($k<-1$), we have that the positions of these chips may not exceed $k+m$.  Consider the chip with label $k+1$.  If this chip ever reaches position $k+1+m$, then by our inductive assumption, it must be the smallest chip to reach this position.  As a result, it cannot be fired to the right and increase its position from $k+1+m$.  Thus, the position of chip $k+1$ can never exceed $k+1+m$, and the result for negative $k$ follows by induction.  The result for positive $k$ is analogous.

\end{proof}

  Visually, the next Lemma  proves that for $k<0$, chip $k$ must always stay at or to the left of the line $y=-k-1$ in the grid, while for $k>0$, chip $k$ must stay at or to the right of the line $x=k-1$, see Figure~\ref{lines}.

\begin{lemma}\label{mainlemma}

For each chip $k$ with $-m \le k <0$, and each $x$ with $0 \le x \le m-1$, the position of chip $k$ may not exceed $x+k+1$ immediately preceding firing move $(x,-k-1)$.  Similarly, for each $0 < k \le m$ and $0 \le y \le m-1$, the position of chip $k$ must be at least $k-1-y$ immediately preceding firing move $(k-1,y)$.

\end{lemma}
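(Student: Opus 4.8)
The plan is to deduce the Lemma from a uniform strengthening proved by a double induction, using Lemma~\ref{diamond} (the grid structure) and Lemma~\ref{bounds} (position bounds) as the two engines. By the left--right symmetry of labeled chip-firing (negate all labels and reflect $\mathbb Z$, which also swaps the two coordinates of a move in the diamond), it suffices to prove the half concerning $k<0$. For $0\le\ell\le m-1$ let $S(\ell)$ be the assertion: \emph{for every $x$ with $0\le x\le m-1$, immediately before firing move $(x,\ell)$, every chip of label $\le-\ell-1$ occupies a position $\le x-\ell$ (the site of move $(x,\ell)$), and at most one such chip sits at the site $x-\ell$ itself}. Since chip $k$ has label $k=-(-k-1)-1$, statement $S(-k-1)$ applied to that chip is exactly the $k<0$ half of the Lemma. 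I would prove $S(\ell)$ by downward induction on $\ell$, and for fixed $\ell$ by downward induction on $x$. The inner base case $x=m-1$ is immediate from Lemma~\ref{bounds}, since a chip of label $c\le-\ell-1$ never exceeds $c+m\le(m-1)-\ell$; in particular $S(m-1)$ reduces entirely to Lemma~\ref{bounds}, as the only chip of label $\le-m$ is chip $-m$.

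For the inner step I pass from the instant just before move $(x+1,\ell)$ to the instant just before move $(x,\ell)$. The local bookkeeping rests on three facts, all consequences of Lemma~\ref{diamond} and the cover relations of $P$: (a) the move at site $x+1-\ell$ immediately following $(x+1,\ell)$ is $(x,\ell-1)$, and $(x,\ell-1)\le(x,\ell)$; the move at site $x-\ell$ immediately preceding $(x,\ell)$ is $(x+1,\ell+1)$, and $(x+1,\ell)\le(x+1,\ell+1)$; hence no firing occurs at sites $x-\ell$ or $x+1-\ell$ strictly between moves $(x+1,\ell)$ and $(x,\ell)$; (b) move $(x+1,\ell)$ fires exactly two chips (Lemma~\ref{diamond}); and (c)---the \emph{key claim}---those two chips do not both have label $\le-\ell-1$. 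Granting (c), the at most one chip of label $\le-\ell-1$ at site $x+1-\ell$ before move $(x+1,\ell)$ is the \emph{smaller} of the firing pair, hence is sent left to $x-\ell$; combining with the inductive hypothesis at $x+1$, with (a), and with Lemma~\ref{bounds} (which rules out a negatively labeled chip drifting rightward through sites $>x-\ell$ in the short window before move $(x,\ell)$), every chip of label $\le-\ell-1$ is at position $\le x-\ell$ just before move $(x,\ell)$, and the ``at most one at $x-\ell$'' clause follows by tracking the same pair.

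It remains to prove the key claim (c), and this is where the double induction is genuinely used. Suppose two chips $j_1<j_2\le-\ell-1$ sit at $x+1-\ell$ just before move $(x+1,\ell)$. Put $j=j_1$ and $\ell'=-j-1\ge\ell+1$. Lemma~\ref{bounds} applied to chip $j$ forces $x+1-\ell\le j+m$, i.e.\ $\ell'\le m+\ell-x-2$, which is exactly the inequality needed to place $x'=x+\ell'-\ell$ inside $\{x+1,\dots,m-2\}$, so that move $(x',\ell')$ lies in the diamond and fires at site $x'-\ell'=x-\ell$. From the covers of $P$ one checks $(x+1,\ell)\le(x',\ell')$, so move $(x',\ell')$ occurs strictly earlier than move $(x+1,\ell)$. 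By the outer inductive hypothesis $S(\ell')$, just before move $(x',\ell')$ chip $j$ is at a position $\le x-\ell$ and, if it is at site $x-\ell$, it is the smaller of the two chips there; so move $(x',\ell')$ leaves chip $j$ at position $\le x-\ell-1$. Then, using $S(\ell'')$ for the finitely many levels $\ell''>\ell$ to control any further firings at sites $\le x-\ell-1$ that occur before move $(x+1,\ell)$, chip $j$ never again reaches site $x+1-\ell$ --- contradicting that $j=j_1$ sits there just before move $(x+1,\ell)$.

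I expect the key claim (c) to be the main obstacle. It is precisely what forces $S(\ell)$ to carry the ``at most one chip at the current sweep site'' clause and to be proved from the top level $\ell=m-1$ downward, and making it fully rigorous requires a careful, purely order-theoretic verification --- using only Lemma~\ref{diamond} and the cover relations of $P$ --- of several statements of the form ``no firing occurs at a prescribed short band of consecutive sites during a prescribed time window,'' including the degenerate case $\ell'=\ell+1$. Everything else --- the reduction to $k<0$, the base cases, and the final specialization to chip $k$ --- is routine.
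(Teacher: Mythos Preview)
Your scheme matches the paper's: a double induction, outer on the level $\ell=-k-1$ (downward from $m-1$) and inner on $x$ (downward from $m-1$). Strengthening the hypothesis to track \emph{all} chips of label $\le-\ell-1$ simultaneously, with an ``at most one at the sweep site'' clause, is a natural idea. But the inner step is misorganized in a way that leaves a real gap.

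Your ``key claim (c)''---that the two chips fired by move $(x+1,\ell)$ do not both carry a label $\le-\ell-1$---is precisely the ``at most one'' clause of $S(\ell)$ at $x+1$, i.e.\ your inner inductive hypothesis. It is \emph{assumed}, not owed, so the elaborate contradiction argument you mount for it is redundant. What you \emph{do} owe in the inner step is the ``at most one'' clause at $x$, and you dispatch that with ``follows by tracking the same pair.'' That is the gap: after move $(x+1,\ell)$ deposits its lone small chip at site $x-\ell$, your fact (a) excludes firings only at sites $x-\ell$ and $x+1-\ell$; it does not exclude moves at site $x-\ell-1$, any of which could fire a second chip of label $\le-\ell-1$ rightward onto $x-\ell$ before move $(x,\ell)$ occurs. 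So two small chips may well sit there, and ``tracking the same pair'' does not rule it out. Even if you repurpose your contradiction argument for (c) to handle this clause at $x$ instead, the final step (``chip $j$ never again reaches \ldots'' via $S(\ell'')$ for $\ell''>\ell$) is exactly the delicate order-theoretic bookkeeping you yourself flag as the main obstacle, and it is not carried out.

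The paper avoids an ``at most one'' clause altogether. In its inner step for chip $k+1$ at move $(x,-k-2)$ (site $x+k+2$), it simply invokes the outer hypothesis for each smaller chip $k'\le k$ at that chip's own row: since the moves $(x,j)$ with $j>-k-2$ have already fired, chip $k'$ has been swept to a position $\le x+k'\le x+k<x+k+2$. Hence if chip $k+1$ is present at the firing site, it is automatically the smaller of the two chips there, and the induction closes with nothing extra to maintain.
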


\begin{proof}

  We proceed by induction, first on coordinate $x$, and then on chip $k$.  For the base case, let $k=-m$ and $x=m-1$; $x$ will decrease to 0, while $k$ increases to -1.

We must first show that chip $-m$ has position at most 0 prior to firing move $(m-1,m-1)$.  But by Lemma~\ref{bounds}, the position of chip $-m$ may never exceed 0.

Next, fix $k=-m$ and induct on $x$.  Suppose that chip $-m$ has position at most $x-m+1$ prior to firing move $(x,m-1)$.  This leaves us with two cases: either the position of chip $-m$ is at most $x-m$ prior to move $(x,m-1)$, in which case its position will not change as a result of move $(x,m-1)$, or its position is exactly equal to $x-m+1$ when move $(x,m-1)$ occurs.  From Lemma~\ref{diamond},there are only 2 chips present when move $(x,m-1)$ occurs, so chip $-m$ must be fired by this move.  Because $-m$ is the smallest chip, it must be fired to the left, to position $x-m$.  Since no moves may occur at site $x-m$ between moves $(x,m-1)$ and $(x-1,m-1)$, chip $-m$ must still be at site $x-m$ prior to firing move $(x-1,m-1)$.

Now, induct on $k$.  Assume that for all $k'\le k$ and $0 \le x \le m-1$, the position of chip $k'$ may not exceed $x+k'+1$ immediately preceding firing move $(x,-k'-1)$.  Fixing chip $k+1$ and setting $x=m-1$, we need the position of chip $k+1$ to not exceed $k+1+m$ prior to firing move $(m-1,-k-2)$.  Again by Lemma~\ref{bounds}, chip $k+1$ may never exceed that position.

Finally, assume that chip $k+1$ has position at most $x+k+2$ prior to firing move $(x,-k-2)$.  We again have two cases: either the position of chip $k+1$ is at most $x+k+1$ prior to move $(x,-k-2)$, in which case its position will not change as a result of move $(x,-k-2)$, or its position is exactly equal to $x+k+2$ when move $(x,-k-2)$ occurs.  From Lemma~\ref{diamond}, there are only 2 chips present when move $(x,-k-2)$ occurs, so chip $k+1$ must be fired by this move.  Since move $(x,j)$ has already taken place for all $j<k-2$, all chips with values less than or equal to $k$ must have position at most $x+k$.  As a result, chip $k+1$ must be the smallest chip at site $x+k+2$ when move $(x,-k-2)$ occurs, so it is fired to the left, to position $x+k+1$.  Therefore chip $k+1$ has position at most $x+k+1$ after move $(x,-k-2)$, and since no moves may take place at site $x+k+1$ between moves $(x,-k-2)$ and $(x-1,-k-2)$, chip $k+1$ must still have position at most $x+k+1$ immediately prior to move $(x-1,-k-2)$ as desired.  The positive case follows similarly.

\end{proof}

\begin{center}
\begin{tikzpicture} 

\draw[black, thick] (0,0) -- (4,4);
\draw[black, thick] (1,-1) -- (5,3);
\draw[black, thick] (2,-2) -- (6,2);
\draw[black, thick] (3,-3) -- (7,1);
\draw[black, thick] (4,-4) -- (8,0);
\draw[black, thick] (0,0) -- (4,-4);
\draw[black, thick] (1,1) -- (5,-3);
\draw[black, thick] (2,2) -- (6,-2);
\draw[black, thick] (3,3) -- (7,-1);
\draw[black, thick] (4,4) -- (8,0);
\draw[red, thick, ->] (1,3) -- (5.9,-1.9);
\draw[red, thick, ->] (1.5,2.5) --(2,2.5);
\draw[red, thick, ->] (6,4) -- (1.1,-0.9);
\draw[red, thick, ->] (5.5,3.5) -- (5,3.5);

\filldraw[black] (0,0) circle (2pt) node[right] { $\text{ }1$} node[left] {$-5\text{ }$};
\filldraw[black] (1,1) circle (2pt) node[right] { $\text{ }2$} node[left] {$-5\text{ }$};
\filldraw[black] (1,-1) circle (2pt) node[right] { $\text{ }1$} node[left] {$-4\text{ }$};
\filldraw[black] (2,2) circle (2pt) node[right] { $\text{ }3$} node[left] {$-5\text{ }$};
\filldraw[black] (2,0) circle (2pt) node[right] { $\text{ }2$} node[left] {$-4\text{ }$};
\filldraw[black] (2,-2) circle (2pt) node[right] { $\text{ }1$} node[left] {$-3\text{ }$};
\filldraw[black] (3,3) circle (2pt) node[right] { $\text{ }4$} node[left] {$-5\text{ }$};
\filldraw[black] (3,1) circle (2pt) node[right] { $\text{ }3$} node[left] {$-4\text{ }$};
\filldraw[black] (3,-1) circle (2pt) node[right] { $\text{ }2$} node[left] {$-3\text{ }$};
\filldraw[black] (3,-3) circle (2pt) node[right] { $\text{ }1$} node[left] {$-2\text{ }$};
\filldraw[black] (4,4) circle (2pt) node[right] { $\text{ }5$} node[left] {$-5\text{ }$};
\filldraw[black] (4,2) circle (2pt) node[right] { $\text{ }4$} node[left] {$-4\text{ }$};
\filldraw[black] (4,0) circle (2pt) node[right] { $\text{ }3$} node[left] {$-3\text{ }$};
\filldraw[black] (4,-2) circle (2pt) node[right] { $\text{ }2$} node[left] {$-2\text{ }$};
\filldraw[black] (4,-4) circle (2pt) node[right] { $\text{ }1$} node[left] {$-1\text{ }$};
\filldraw[black] (5,3) circle (2pt) node[right] { $\text{ }5$} node[left] {$-4\text{ }$};
\filldraw[black] (5,1) circle (2pt) node[right] { $\text{ }4$} node[left] {$-3\text{ }$};
\filldraw[black] (5,-1) circle (2pt) node[right] { $\text{ }3$} node[left] {$-2\text{ }$};
\filldraw[black] (5,-3) circle (2pt) node[right] { $\text{ }2$} node[left] {$-1\text{ }$};
\filldraw[black] (6,2) circle (2pt) node[right] { $\text{ }5$} node[left] {$-3\text{ }$};
\filldraw[black] (6,0) circle (2pt) node[right] { $\text{ }4$} node[left] {$-2\text{ }$};
\filldraw[black] (6,-2) circle (2pt) node[right] { $\text{ }3$} node[left] {$-1\text{ }$};
\filldraw[black] (7,1) circle (2pt) node[right] { $\text{ }5$} node[left] {$-2\text{ }$};
\filldraw[black] (7,-1) circle (2pt) node[right] { $\text{ }4$} node[left] {$-1\text{ }$};
\filldraw[black] (8,0) circle (2pt) node[right] { $\text{ }5$} node[left] {$-1\text{ }$};
\filldraw[red] (1,3) circle (0.1pt) node[right] { $\text{ }3$};
\filldraw[red] (6,4) circle (0.1pt) node[left] {$-4\text{ }$};

\node at (4,5.5) {Sites};
\node at (-1,5) {$-5$};
\node at (0,5) {$-4$};
\node at (1,5) {$-3$};
\node at (2,5) {$-2$};
\node at (3,5) {$-1$};
\node at (4,5) {$0$};
\node at (5,5) {$1$};
\node at (6,5) {$2$};
\node at (7,5) {$3$};
\node at (8,5) {$4$};
\node at (9,5) {$5$};

\node[left] at (-1,-5) {$-5$};
\node[left] at (0,-5) {$-4$};
\node[left] at (1,-5) {$-3$};
\node[left] at (2,-5) {$-2$};
\node[left] at (3,-5) {$-1$};
\node[right] at (5,-5) {$1$};
\node[right] at (6,-5) {$2$};
\node[right] at (7,-5) {$3$};
\node[right] at (8,-5) {$4$};
\node[right] at (9,-5) {$5$};

\draw[->, thick] (0,0) -- (-1,-5);
\draw[->, thick] (1,-1) -- (0,-5);
\draw[->, thick] (2,-2) -- (1,-5);
\draw[->, thick] (3,-3) -- (2,-5);
\draw[->, thick] (4,-4) -- (3,-5);
\draw[->, thick] (4,-4) -- (5,-5);
\draw[->, thick] (5,-3) -- (6,-5);
\draw[->, thick] (6,-2) -- (7,-5);
\draw[->, thick] (7,-1) -- (8,-5);
\draw[->, thick] (8,0) -- (9,-5);

\end{tikzpicture}


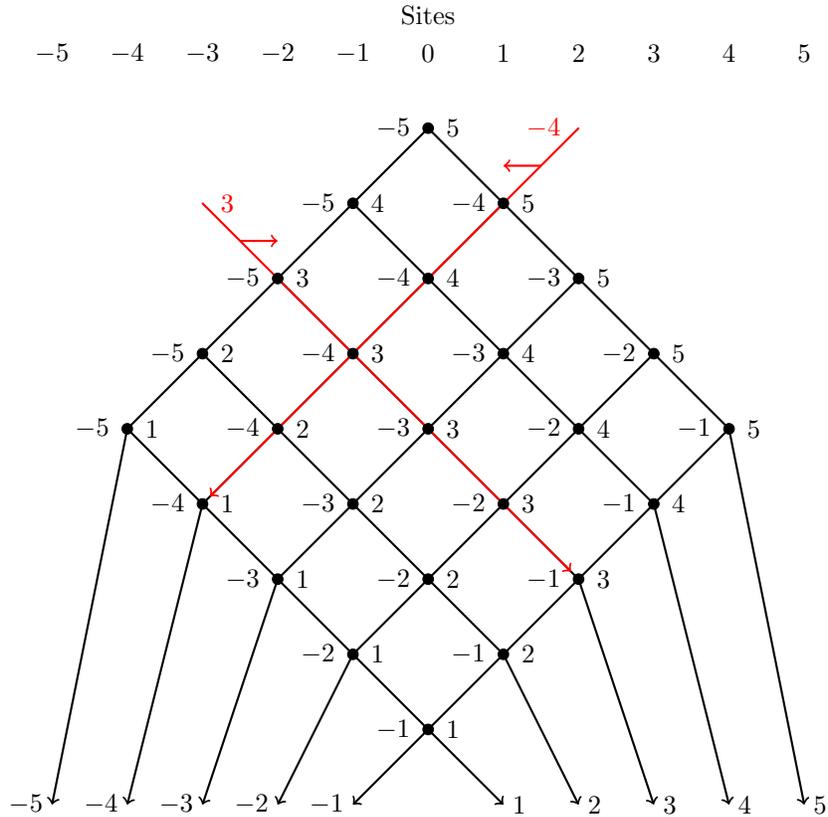
\captionof{figure}{Diagram for Lemma~\ref{mainlemma} and Lemma~\ref{mainthm}.  For each firing move shown, the chip written to the left of the vertex must be at or to the left of that site when the firing move occurs, while the chip written to the right must be at or to the right of that site.  The arrows at the bottom show how the last firing moves at each site send each of the chips to their final, sorted positions.  As examples, the red arrows shown are the respective left and right bounds for the positions of chips $3$ and $-4$ when given firing moves occur.} \label{lines}

\end{center}

We are now ready for our main result.

\begin{theorem}\label{mainthm}

When the labeled chip-firing process terminates, each chip $k$ is at position $k$.

\end{theorem}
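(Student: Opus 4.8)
\emph{Proof plan.} The plan is to combine the three facts already in hand --- the unlabeled terminal configuration (Theorem~\ref{UnlabeledConfiguration}), the grid structure at the bottom of the poset $P$ (Lemma~\ref{diamond}), and the diagonal position bounds (Lemma~\ref{mainlemma}) --- by sweeping inward along the bottom border of the diamond. By Theorem~\ref{UnlabeledConfiguration} the terminal configuration has exactly one chip at each of the sites $-m,\dots,-1,1,\dots,m$, so it suffices to decide which label lands where. I would first reduce the theorem to the one-sided statement that in the terminal configuration chip $k$ sits at position $\ge k$ for every $k>0$ and at position $\le k$ for every $k<0$: granting this, a pigeonhole argument finishes it, since among the occupied positions only $k,k+1,\dots,m$ are $\ge k$, so chip $m$ has nowhere to go but $m$, then chip $m-1$ nowhere but $m-1$, and so on, with the mirror argument on the negative side.

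For the one-sided statement on the positive side I would induct downward on $k$ from $m$ to $1$, tracking chip $k$ at the instant that move $(k-1,0)$ --- the last firing at site $k-1$ --- occurs; since $(0,0)\le(1,0)\le\cdots\le(m-1,0)$ in $P$, these decisive moves occur in the temporal order $(m-1,0),(m-2,0),\dots,(0,0)$, matching the induction. By Lemma~\ref{mainlemma} with $y=0$, chip $k$ has position $\ge k-1$ just before move $(k-1,0)$, so either it sits at the firing site $k-1$ or it has already reached a position $\ge k$. In the first case Lemma~\ref{diamond}(2) says exactly two chips fire, and by the inductive hypothesis chips $k+1,\dots,m$ are already frozen at positions $k+1,\dots,m$ and hence absent from site $k-1$, so the partner chip has label $<k$, chip $k$ is the larger, and it is sent right to site $k$; since $(k,0)\ge(k-1,0)$ in $P$ the last firing at site $k$ has already happened, and there is no further firing at site $k-1$, so chip $k$ is frozen at position $k$. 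In the second case the same exhaustion of firings at sites $k-1$ and $k$ shows chip $k$ can never be pushed left of $k$ again, and since positions $k+1,\dots,m$ are occupied by the already-frozen larger chips while nothing lies beyond $m$ in the terminal configuration, chip $k$ must in fact end at $k$. Either way chip $k$ ends at position $k$, completing the induction; the negative side is the mirror image via the other half of Lemma~\ref{mainlemma} and the moves $(0,m-1),\dots,(0,0)$.

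The base case $k=m$ is the spot where the diamond degenerates: site $m-1$ fires exactly once (through $(m-1,0)$) and site $m$ never fires, but the inductive argument applies verbatim there, the ``larger chip fired right'' step now using only that no label exceeds $m$. I expect the main obstacle to be bookkeeping rather than a single hard idea: one must coordinate four inputs simultaneously in the inductive step --- the lower bound from Lemma~\ref{mainlemma}, the exactly-two-chips count from Lemma~\ref{diamond}, the comparabilities in $P$ that make the larger-labeled chips already settled, and Theorem~\ref{UnlabeledConfiguration} to preclude a chip overshooting its target --- and one must check carefully that the two cases of the dichotomy are genuinely exhaustive and that ``frozen at position $k$'' truly persists all the way to termination.
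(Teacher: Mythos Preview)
Your proposal is correct and follows essentially the same approach as the paper: apply Lemma~\ref{mainlemma} at the bottom border of the diamond, argue that chip $k$ is pushed to position $k$ by the last fire at the adjacent site and then frozen (since all later moves lie on the other side), and finish by pigeonhole against Theorem~\ref{UnlabeledConfiguration}. The only cosmetic difference is that the paper, working on the negative side, shows chip $k$ is the \emph{smallest} at the firing site by invoking the bounds of Lemma~\ref{mainlemma} for all chips $k'<k$, whereas you show chip $k$ is the \emph{largest} by carrying the stronger inductive hypothesis that chips $k+1,\dots,m$ are already frozen; both routes are valid and yield the same conclusion.
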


\begin{proof}

By Lemma~\ref{mainlemma}, chip $k$ ($k<0$) must have position less than or equal to $k+1$ immediately prior to firing move $(0,-k-1)$.  Using the same argument as in Lemma~\ref{mainlemma}, it must have position at most $k$ immediately after that firing move.  Since no firing moves may occur at positions less than or equal to $k$ after firing move $(0,-k-1)$, the final position of chip $k$ must be less than or equal to $k$.  The only way to satisfy this condition simultaneously for all $k<0$ is if each chip $k$ is at position $k$.  The case for $k>0$ follows similarly.

\end{proof}

\section{Related Results}\label{related}

Using the methods above, we are able to prove confluence for a number of similar cases. All of the results essentially come down to the same principles:
\begin{itemize}
\item There is a diamond of moves at the end of the process that satisfies local confluence

\item The initial portion of the chip-firing process permutes the chips by only a ``small amount''

\item The final portion of the chip-firing process can always ``fix'' the resulting errors. 
\end{itemize}
In general, larger diamonds lead to more straightforward proofs, while smaller diamonds require stricter bounds.

\subsection{Alternative Initial Configuration}

Consider an initial configuration with $2n+1$ chips: chips $-n$ through $-1$ are at site $-1$, and chips $1$ through $n+1$ are at the origin.  We have the following result, originally  conjectured in \cite{GHMP2}.

\begin{theorem}
In the labeled chip-firing process beginning with chips $-n$ through $-1$ at site $-1$ and chips $1$ through $n+1$ at the origin, each chip $k$ ends at site $k$.
\end{theorem}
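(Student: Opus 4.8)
The plan is to follow the same three-step template as the proof of Theorem~\ref{configuration}: first exhibit a locally confluent ``diamond'' at the bottom of the firing-move poset $P$, then prove crude position bounds for the individual chips, and finally combine the two to pin down every chip's final site. A preliminary step is to analyze the underlying \emph{unlabeled} process from this configuration: by the abelian property it has a well-defined stable configuration and a well-defined odometer $N_k$, the number of firings at site $k$, both obtained by solving the discrete Laplace equation on $\mathbb{Z}$ with this source. The stable configuration places one chip at each site of an essentially symmetric block with a single site vacated, and the $N_k$ are piecewise quadratic in $k$, differing from the binomial values $\binom{m-|k|+1}{2}$ of Theorem~\ref{FiringNums} in a controlled way because the source is supported on just two adjacent sites. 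These play the role of Theorems~\ref{UnlabeledConfiguration} and~\ref{FiringNums}.

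The first main step is the analogue of Lemma~\ref{diamond}. The accounting is identical: just before the last firing at a site $k$ it has shed $2(N_k-1)$ chips, so to fire once more it must have received at least $2N_k-c_k$ chips from its two neighbours, where $c_k$ is the initial load at $k$; since each firing of a neighbour deposits exactly one chip at $k$, all neighbouring firings are forced to precede the last one at $k$---and that last firing to occur with exactly two chips present---precisely when $2N_k - c_k = N_{k-1} + N_{k+1}$, the bookkeeping being corrected for the moves already known to come later as one climbs the diagram. Iterating upward along the rows builds the diamond; its apex, its size, and (crucially) its horizontal offset are dictated by the odometer, so for this start the diamond is rooted one site away from the origin. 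I expect this step to be the main obstacle: in the balanced case the binomial bookkeeping is exact and the diamond is centered, whereas here one must check that the off-center, asymmetric odometer still produces the exact equalities the argument requires at every level, and that---just as in the remark following Lemma~\ref{diamond}---the argument degrades only at the two extreme corners and nowhere in the interior.

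The remaining two steps carry over with only cosmetic changes. A strong induction on $|k|$, as in Lemmas~\ref{bounds} and~\ref{mainlemma}, shows that a chip can never overtake all of the chips that must finish beyond it, yielding the position bounds, now carrying an additive offset inherited from the asymmetric start. Feeding these bounds into the diamond exactly as in Theorem~\ref{mainthm} shows that at the instant of each boundary move of the diamond the relevant chip's position is determined to within one site; since the claimed final configuration is the unique one compatible with all of these constraints simultaneously, every chip $k$ must terminate at its asserted site. Structurally nothing differs from Section~\ref{main}; the genuine work lies in keeping the asymmetric offsets straight throughout.
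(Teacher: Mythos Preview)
Your three-step template is exactly the paper's approach; indeed the paper omits all details and simply says the proof is ``virtually identical'' to that of Theorem~\ref{mainthm}. The one concrete datum the paper supplies, which sharpens your anticipation of the geometry, is that the grid at the bottom of $P$ is not an offset square diamond but an $n$ by $n{+}1$ \emph{rectangle}: the $n{+}1$ positive chips acquire position lower bounds along one family of grid lines and the $n$ negative chips acquire upper bounds along the other, and tracking them through this rectangle exactly as in Lemma~\ref{mainlemma} pins down every final position. So your expectation that the asymmetry manifests as a horizontal shift of a square is slightly off---it shows up instead as unequal side lengths---but once you run the Lemma~\ref{diamond} bookkeeping with the correct odometer this is what falls out, and the rest of your outline goes through unchanged.
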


We omit the details here, but the proof is virtually identical to that of \ref{mainthm}.  The diamond becomes an $n$ by $n+1$ rectangle, so the $n+1$ positive chips have position lower bounds, and the $n$ negative chips have position upper bounds.  An argument similar to Lemma \ref{mainlemma} can be used to track the chips through this modified diamond until they reach their final positions.

\subsection{Multiple Edges}

Consider the 1 dimensional grid in which each edge is replaced with $r$ edges.  A firing move sends $r$ chips to the left and $r$ chips to the right.  In the labeled setting, each firing move consists of choosing $2r$ chips at the same site and  sending the $r$ chips with the smallest labels to the left, and the $r$ chips with the largest labels to the right.

Starting with a multiple of $2r$ chips, the final configuration has $r$ chips at each nonempty site, so the notion of sorting does not directly apply.  The next Theorem (previously Conjecture 24 in \cite{HMP16}) shows the strongest possible notion of sorting does hold. 

\begin{theorem}
Consider the labeled chip-firing process on the line with $r$ copies of each edge and $2rm$ chips initially at the origin.  In the final configuration, for all chips $a$ and $b$ with $a<b$, the final position of $a$ is at or to the left of the final position of $b$.  
\end{theorem}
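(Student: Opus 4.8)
The plan is to reproduce the three-step structure of Section~\ref{main} --- (1) a locally confluent diamond of moves at the end of the process, (2) crude bounds on how far chips can travel, (3) tightening those bounds through the diamond --- with the ranks $1,\dots,2rm$ of the labels playing the role that individual chips play in the single-edge case. The one genuinely new point is a reduction of the underlying \emph{unlabeled} dynamics: in unlabeled chip-firing on the $r$-fold path started from $2rm$ chips at the origin, every site carries a multiple of $r$ chips at every moment --- a firing removes $2r$ from one site and adds $r$ to each neighbor, so this is preserved --- hence the process coincides, site by site and move by move, with the single-edge process on $2m$ ``super-chips.'' By Theorem~\ref{FiringNums} site $k$ therefore fires $\binom{m-|k|+1}{2}$ times, exactly as before, and by Theorem~\ref{UnlabeledConfiguration} the terminal configuration has $r$ chips at each position $-m,\dots,-1,1,\dots,m$. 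With the firing counts identical, the proof of Lemma~\ref{diamond} carries over word for word with ``$2$'' replaced by ``$2r$'': there is an $m\times m$ diamond of moves $(x,y)$, $0\le x,y\le m-1$, with $(x,y)\le(x+1,y)$ and $(x,y)\le(x,y+1)$, each occurring with exactly $2r$ chips at the firing site.

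Next I would prove the crude bound (the analog of Lemma~\ref{bounds}): for every $1\le j\le m$, at all times each of the $jr$ lowest-labeled chips occupies a position $\le j-1$, and symmetrically each of the $jr$ highest-labeled chips a position $\ge 1-j$. The key observation is that a firing sends its $r$ lowest-labeled participants left; so if a chip $c$ lies among the $jr$ smallest and sits at a position $\ge j-1$, then --- using the bound for $j-1$ --- every chip sharing that site with $c$ is also among the $jr$ smallest, and fewer than $r$ of them are smaller than $c$, so $c$ is among the $r$ lowest participants of any firing there and is sent left. Thus $c$ can never step from position $j-1$ to position $j$, and induction on $j$ (base case $j=1$: the $r$ smallest chips only move left or stay, starting from $0$) finishes the argument.

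Finally I would tighten this through the diamond, mirroring Lemma~\ref{mainlemma}. Write $q_1<\dots<q_{2m}$ for the terminal positions, so $q_i=i-m-1$ for $i\le m$ and $q_i=i-m$ for $i>m$. The statement to establish by induction --- first on the coordinate $x$, decreasing from $m-1$ to $0$, then on the block index $i$ from $1$ to $m$ --- is: immediately before the move $(x,m-i)$, which lies at site $x-m+i$, all of the $ir$ lowest-labeled chips have position $\le x-m+i$. As in Lemma~\ref{mainlemma}, such a chip $c$ is either already strictly left of the firing site (hence untouched by the move) or sits exactly at it; in the latter case the move involves exactly $2r$ chips, at most $r-1$ of the chips at the site are smaller than $c$ (those in lower blocks having been pushed further left by the induction), so $c$ is among the $r$ lowest participants and is fired one step left, after which the grid ordering forbids any move at its new site before the next herding move $(x-1,m-i)$. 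Setting $x=0$ shows that the $ir$ lowest-labeled chips terminate at positions $\le q_i$ for each $i\le m$, and the symmetric statement handles $i>m$; since $q_1,\dots,q_i$ hold exactly $ir$ chips, the $ir$ lowest-labeled chips must occupy precisely $q_1,\dots,q_i$. Letting $i$ vary, the chip of rank $t$ terminates at $q_{\lceil t/r\rceil}$, and since $t\mapsto q_{\lceil t/r\rceil}$ is nondecreasing, $a<b$ forces the final position of $a$ to be at or to the left of that of $b$.

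I expect the main obstacle to be the timing bookkeeping in this last step: one must check, using only the ordering relations from Lemma~\ref{diamond}, that no stray firing at a neighboring site intervenes between the consecutive moves that shepherd a block of $r$ chips leftward, and that the conclusions about lower blocks are already in force at exactly the moments they are invoked. This is precisely where the single-edge argument of Lemma~\ref{mainlemma} is delicate, and the $r$-fold version re-runs that delicacy one block of $r$ chips at a time; the other two steps are essentially mechanical.
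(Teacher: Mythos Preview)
Your approach is essentially identical to the paper's: reduce the unlabeled dynamics by dividing by $r$ to recover the single-edge firing counts and hence the same firing-order poset and diamond, then re-run Lemmas~\ref{bounds}, \ref{mainlemma}, and Theorem~\ref{mainthm} with blocks of $r$ chips in place of individual chips. One small slip to fix in your crude-bound step: it is not true that ``every chip sharing that site with $c$ is also among the $jr$ smallest'' --- larger chips can certainly be there --- but what you actually need (and what follows from the inductive bound for $j-1$) is only that fewer than $r$ chips at that site are \emph{smaller} than $c$, since any such chip has rank strictly between $(j-1)r$ and $jr$; with that correction the argument goes through.
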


\begin{proof}

The proof is similar to that of Theorem~\ref{mainthm}.    Suppose that we start with an initial unlabeled configuration with $2rm$ chips and then fire to completion.  If, at each step, we divide the number of chips at each site by $r$, then the resulting process begins with $2m$ chips, and each firing move sends one to the left and one to the right.  Since this can be reversed by multiplying all chip configurations in the original process by $r$, there is a bijection between chip configurations in the $r$ edge case and the 1 edge case, with the same available firing moves, and thus the same partial order.

  Using the same argument as in Lemma~\ref{bounds}, we get that the $r$ smallest chips cannot move past position 0, the next $r$ smallest chips cannot move past position $1$, and so on, simply because the $r$ smallest chips at any given site cannot be fired to the right from that site.  Then, using the arguments from Lemma~\ref{mainlemma} and Theorem~\ref{mainthm}, we can show that those groups of $r$ chips satisfy the same position bounds in the diamond as did the original single chips.   Thus the $r$ smallest chips end up at position $-m$, the next $r$ smallest chips end at position $-m+1$, and so on, giving the analogous results in the $r$ edge case.

\end{proof}

\subsection{Self-Loops at the Origin}

Next consider the $1$ dimensional grid with self-loops at the origin.  Firing moves at all sites but the origin are the same as in the original problem.  If there are $s$ self-loops at the origin, then a firing move at the origin consists of choosing $s+2$ chips at the origin and firing the smallest to the left and the largest to the right.  Here, we reprove Theorem 20 from \cite{HMP16}:

\begin{theorem}
Consider the labeled chip-firing problem with $s$ self-loops at the origin.  For $n$ chips with $n\ge s$ and $n\equiv s$ mod $2$, the final configuration is weakly sorted with $s$ chips at the origin and one chip at every other site from $\frac{s-n}{2}$ to $\frac{n-s}{2}$.
\end{theorem}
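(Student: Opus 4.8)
The plan is to reduce the self-loop dynamics to the ordinary ($s=0$) dynamics at the level of firing moves, and then rerun the three-step argument of Theorem~\ref{mainthm}, keeping track of the one place where the origin now behaves differently. Write $n = 2m+s$, which is possible since $n \ge s$ and $n \equiv s \bmod 2$. First I would observe that, since the origin starts with $2m+s \ge s$ chips, loses exactly $2$ chips each time it fires, and fires only when it has at least $s+2$ chips, the origin always holds at least $s$ chips; hence the substitution $c_0 \mapsto c_0 - s$ applied at the origin (all other site-counts unchanged) identifies the self-loop process with $2m+s$ chips at the origin with the ordinary chip-firing process with $2m$ chips, preserving the set of legal firing moves at every configuration together with their effects. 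Consequently the firing-move poset $P$ is literally the same poset as for the ordinary $2m$-chip process; by Theorem~\ref{FiringNums} the firing numbers are $\binom{m+1}{2}$ at the origin and $\binom{m-|k|+1}{2}$ at site $k\neq 0$; and by Theorem~\ref{UnlabeledConfiguration} the final unlabeled configuration is unique, with $s$ chips at the origin and one chip at each of $\pm 1,\dots,\pm m$. In particular Lemma~\ref{diamond} applies, with the single reinterpretation that at the moves $(j,j)$ on the central column of the $m\times m$ diamond (the last $m$ firing moves at the origin) there are exactly $s+2$ chips present rather than $2$, because we have added $s$ back at the origin.

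Next I would fix labels. Index the $n$ chips by rank; call the $m$ smallest the \emph{negative chips}, relabelled $-m,\dots,-1$, the next $s$ the \emph{central chips}, and the $m$ largest the \emph{positive chips}, relabelled $1,\dots,m$. Then I would reprove Lemma~\ref{bounds}: negative chip $k$ never exceeds position $k+m$, and positive chip $k$ never drops below $k-m$. The induction is exactly as in Lemma~\ref{bounds}: if negative chip $k$ ever reaches $k+m$ then, by induction, all $k+m$ smaller chips sit at positions $\le k+m-1$, so chip $k$ is the smallest chip at its site and cannot be fired rightward; at a non-origin site this is immediate, and at the origin (which is the site $k+m$ only when $k=-m$) a firing move sends the smallest of the chosen $s+2$ chips to the left, so again chip $k$ is never pushed right.

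The heart of the matter is the analogue of Lemma~\ref{mainlemma}: for each negative chip $k$ and each $0\le x\le m-1$, the position of chip $k$ is at most $x+k+1$ immediately before firing move $(x,-k-1)$, and symmetrically for positive chips before $(k-1,y)$. The proof is the same double induction, first on $x$ (decreasing) and then on the chips, with the same geometric picture as in Figure~\ref{lines}: chip $k$ is confined to the side $y\le -k-1$ of the diamond. The one new feature is that the line $y=-k-1$ now meets an origin move, namely $(-k-1,-k-1)$, exactly once. When that move fires there are exactly $s+2$ chips at the origin, so the firer is forced to take \emph{all} of them and send the smallest one leftward; and by the chip-induction every chip smaller than $k$ already lies at a position $<0$, hence off the origin, so chip $k$ — if present at the origin — is the smallest of those $s+2$ chips and is fired left. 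This plays exactly the role of the step ``there are only $2$ chips present, so chip $k$ must be fired'' in the proof of Lemma~\ref{mainlemma}, and with it the remaining reasoning (chip $k$ cannot return past site $-k-1$, so its bound improves by $1$ as $x$ decreases) goes through unchanged.

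Finally, setting $x=0$ gives, exactly as in Theorem~\ref{mainthm}, that negative chip $k$ has position $\le k+1$ just before move $(0,-k-1)$ and hence $\le k$ just after, and no further moves occur at sites $\le k$, so negative chip $k$ terminates at a position $\le k$; symmetrically positive chip $k$ terminates at a position $\ge k$. Since the unlabeled terminal configuration has exactly one chip at each of $-m,\dots,-1$ and $1,\dots,m$ and $s$ chips at the origin, these inequalities force negative chip $k$ to end at $k$ and positive chip $k$ to end at $k$, leaving the $s$ central chips at the origin — precisely the asserted weakly sorted configuration. The main obstacle, and the only substantive departure from the even-line argument, is the central column of the diamond, where the origin fires with $s+2$ chips rather than $2$: one must know that by the time those moves occur all smaller chips have already been squeezed out to the left, so that the origin's choice of which chip to send left is in fact forced, and this is exactly what the induction on the chips supplies.
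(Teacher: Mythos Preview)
Your proposal is correct and follows essentially the same approach as the paper: both reduce to the ordinary $2m$-chip process via the $c_0 \mapsto c_0 - s$ bijection, observe that the firing-order poset and hence Lemma~\ref{diamond} transfer unchanged, and then rerun Lemmas~\ref{bounds} and~\ref{mainlemma} on the $m$ smallest and $m$ largest chips, with the central $s$ chips forced to the origin by pigeonhole. The paper's proof is quite terse (``the same bounds \dots\ exist as before''), whereas you explicitly isolate the one genuinely new point---that at the origin diamond moves $(j,j)$ there are exactly $s+2$ chips, all of which must participate in the fire, so the inductive hypothesis on smaller chips still forces chip $k$ to be sent left---which is exactly the detail needed to make Lemma~\ref{mainlemma} go through.
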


\begin{proof}
As in the multiple edge case, there is a bijection between unlabeled states in this problem and in the original problem.  All unlabeled states in this chip-firing process must have at least $s$ chips at the origin.  Suppose that we start with an initial unlabeled configuration with $n$ chips.  If, at each step, we subtract $s$ from the number of the chips at the origin, then the resulting process begins with $n-s$ chips, and each firing move sends one to the left and one to the right.  This can be reversed by adding $s$ chips at the origin at each step of the original chip-firing process.  We again get the same available firing moves, and since $n-s$ must be even, we get the same partial order.

The same bounds on chip positions exist as before, which in turn give the same bounds on chip positions that occur at various firing moves in the diamond.  While these bounds are only ever applied to the $m$ smallest chips and the $m$ largest chips, the remaining chips are eventually forced to be at the origin by the fact that specific other chips are required to occupy every other site.  This results in each of the $m$ smallest and $m$ largest chips being placed in sorted order at every position but the origin, which results in all of the chips being weakly sorted.

\end{proof}

\subsection{Exponentially Many Edges}

We now consider a new problem with more structure in the poset than the original problem.  This results in certain parts of the proof becoming simpler, as the chips' movements are more constrained.

We now vary the number of edges between each pair of adjacent nodes.  We choose a $t$, and then for every $k$ from 0 to $t$, we place $2^{t-k}$ edges between nodes $k$ and $k+1$, as well as between nodes $-k$ and $-k-1$.  Past nodes $t+1$ and $-t-1$, all other pairs of adjacent nodes are connected by a single edge.  If a site has $a$ leftward edges and $b$ rightward edges, we can fire at that site by choosing $a+b$ chips, and then sending the smallest $a$ to the left and the largest $b$ to the right.  We will show that a process beginning with $2^{t+2}$ chips must end in a weakly sorted configuration.  We first establish a relationship between the numbers of firing moves at each site:

\begin{lemma}
For $0\le k \le t$, site $k$ fires exactly 2 more times than site $k+1$.  Similarly, for $-t \le k \le 0$, site $k$ fires exactly 2 more times than site $k-1$.
\end{lemma}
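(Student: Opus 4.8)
The plan is to reduce the claim to identifying the unlabeled final configuration, exploiting that firing counts depend neither on the chip labels nor on the firing order. Let $f_k$ be the number of firings at site $k$; by the abelian property of chip-firing this is well defined and finite, with $f_k=0$ for $|k|$ large, since finitely many chips on the (modified) infinite path stay within a bounded interval and so the dynamics is really that of a finite graph. Write $e_k$ for the multiplicity of the edge $\{k,k+1\}$, so $e_k=2^{t-k}$ for $0\le k\le t$ and $e_k=1$ otherwise. That edge is the unique one across the cut $\{j\le k\}\mid\{j\ge k+1\}$; each firing at $k$ pushes exactly $e_k$ chips across it to the right and each firing at $k+1$ pushes exactly $e_k$ back left, so balancing the net flow (all chips start at the origin) gives, for $k\ge 0$,
\[
e_k\,(f_k-f_{k+1})\;=\;R_k\;:=\;\#\{\text{chips at sites }>k\text{ in the final configuration}\},
\]
and symmetrically for $k\le 0$. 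So it suffices to pin down $R_k$ for $0\le k\le t$.

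Next I would determine (enough of) the final configuration $C^\ast$. By the left--right symmetry of both the starting placement and the graph, $C^\ast$ is symmetric, and I claim it is: origin empty, $C^\ast_{\pm j}=2^{\,t+1-j}$ for $1\le j\le t$, $C^\ast_{\pm(t+1)}=C^\ast_{\pm(t+2)}=1$, and $C^\ast_{\pm j}=0$ for $j\ge t+3$; one checks these counts total $2^{t+2}$, and for $t=0$ this is the familiar four-chip outcome on the plain line. To prove this, put $u_k=\max\{0,\,2t-2|k|+3\}$ and verify the three standard conditions: $u\ge 0$ has finite support; $C^\ast$ is stable (for $1\le j\le t$ site $j$ carries $2^{t+1-j}=2\cdot 2^{t-j}<3\cdot 2^{t-j}=\deg j$, and every other site carries fewer chips than its degree); and $C^{\mathrm{init}}-\Delta u=C^\ast$, where $\Delta$ is the graph Laplacian of the modified path. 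This last point is a routine site-by-site check; the only places needing attention are site $0$ (where $\deg 0=2^{t+1}$ and $u_0-u_1=2$, so $\Delta u(0)=2^{t+2}$) and the transitional sites $t,t+1,t+2$, where the successive differences $u_k-u_{k+1}$ step down $2,1,0$. The least-action principle for chip-firing --- a stabilizing configuration stabilizes to the unique stable configuration that is linearly equivalent to it via a non-negative odometer, see e.g.\ \cite[Chapter 5]{Klivans18} --- then gives that $C^\ast$ is the final configuration and $f_k=u_k$.

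The lemma follows at once: for $0\le k\le t$,
\[
f_k-f_{k+1}=u_k-u_{k+1}=(2t-2k+3)-(2t-2k+1)=2,
\]
and the range $-t\le k\le 0$ is the mirror image (equivalently, $u_{-k}=u_k$). One could also avoid invoking least action and instead substitute the resulting values $R_k=2^{\,t-k+1}=2e_k$ ($0\le k\le t$) into the flow identity above. I expect the real work to be in nailing down $C^\ast$ --- in particular getting the boundary behaviour at sites $\pm(t+1),\pm(t+2)$ correct and checking that the abelian/least-action machinery, usually phrased for finite graphs, legitimately applies here because all activity is confined to a fixed finite window of the path. The rest is bookkeeping.
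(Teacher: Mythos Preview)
Your approach differs from the paper's. The paper argues site by site, inductively: at each step it uses the stability window $0 \le C^{\text{final}}_k < \deg(k)$ together with the already-established value of $f(k-1)-f(k)$ to force $f(k)-f(k+1)=2$. You instead propose the full odometer $u_k=\max\{0,\,2t-2|k|+3\}$ and final configuration $C^*$ at once and verify $C^{\text{init}}-\Delta u = C^*$. Your $u$ and $C^*$ are correct and agree with what the paper later derives, and the cut identity $e_k(f_k-f_{k+1})=R_k$ is a clean organising device.

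There is, however, a real gap in the justification. The three conditions you check --- $u\ge 0$ with finite support, $C^*$ non-negative and stable, and $C^{\text{init}}-\Delta u=C^*$ --- do \emph{not} by themselves force $u$ to be the odometer. The least-action principle only yields $u^*\le u$ pointwise; it does not assert uniqueness of a stable configuration reachable via a non-negative toppling vector. For instance, on the ordinary path $\mathbb{Z}$ the stable configurations $(\dots,0,0,1,1,0,0,\dots)$ and $(\dots,0,1,0,0,1,0,\dots)$ differ by $\Delta w$ where $w$ is the indicator of the two middle sites, so each satisfies your three conditions relative to the other while only one is the actual stabilisation. Thus ``stable and linearly equivalent via a non-negative odometer'' is not a uniqueness criterion, contrary to what you invoke. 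To close the gap you must either (i) exhibit a legal firing sequence realising your $u$ (easy here: fire outward in waves and check the counts), or (ii) prove minimality of $u$ directly. Doing (ii) site by site via the stability inequalities is exactly the paper's induction; option (i) would yield a genuinely different, self-contained argument.
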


\begin{proof}
We will begin with the $k\ge 0$ case.  The negative case follows by symmetry.

We proceed by induction.  We define $f(k)$ to be the number of firing moves at site $k$.  By counting the number of chips lost or gained due to firing, the number of chips at site 0 at the end of the process is equal to $2^{t+2}+2^t(f(1)+f(-1)-2f(0))$.  By symmetry, this is equal to $2^t(2f(1)-2f(0))$  The number of chips can't be negative, and if there are at least $2^{t+1}$ chips left at the end, site 0 can fire.  As a result, we must have $f(0)=2+f(1)=2+f(-1)$ in order to have a valid final configuration.  This results in a final configuration with 0 chips at the origin.

Now, we suppose that $f(k-1)=2+f(k)$ for some $k$ from 1 to $t-1$.  We have that the final number of chips at $k$ is equal to $2^{t-|k|+1}f(k-1)-(2^{t-|k|+1}+2^{t-|k|})f(k)+2^{t-|k|}f(k+1)$.  By our inductive assumption, we get that $f(k-1)=f(k)+2$, so this simplifies to $2^{t-|k|+2}-2^{t-|k|}f(k)+2^{t-|k|}f(k+1)$.  Again, in order to have a stable final configuration, we need $f(k)=2+f(k+1)$, with $2^{t-|k|+1}$ chips at site $k$ in the final configuration.  The result follows by induction.
\end{proof}

In addition to obtaining properties about the numbers of firing moves, we also obtain almost the entire final configuration of the process.  There must be 1 chip each at sites greater than $t+1$ and less than $-t-1$.  Since 1 chip is not enough to fire at any site, this means that these chips must occupy sites $t+2$ and $-t-2$ in the final configuration.  Since no firing moves occur at these sites, site $t+1$ must fire exactly once, meaning that for each $-t-1 \le k \le t+1$, site $k$ must fire $2(t-|k|)+3$ times.

Next we prove a result analogous to showing the existence of the diamond in Section 2.  In this case, the grid structure actually extends to the entire firing order poset, rather than just a small collection of moves at the end, see Figure~\ref{figexp}.

\begin{lemma}
For $1 \le k\le t+1$ and $1 \le j \le 2(t-|k|)+3$, the $j^{th}$ move at site $k$ must occur between moves $j+1$ and $j+2$ at site $k-1$.  Similarly, for $-t-1 \le k \le -1$ and $1 \le j \le 2(t-|k|)+3$, the $j^{th}$ move at site $k$ must occur between moves $j+1$ and $j+2$ at site $k+1$.  Furthermore, all firing moves in the entire process, with the exceptions of the first firing move at each site from sites $-t$ to $t$, leave 0 chips behind at their respective sites after firing.
\end{lemma}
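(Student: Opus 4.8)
The plan is to follow the template of the proof of Lemma~\ref{diamond}, replacing ``$2$ chips'' by the appropriate firing cost at each site and exploiting the fact that the edge multiplicities are powers of $2$, which makes the chip bookkeeping exact and rigidifies the entire poset rather than just a diamond at the bottom. Write $c_k:=3\cdot 2^{t-|k|}$ for $1\le|k|\le t$, $c_0:=2^{t+1}$, and $c_{t+1}:=2$ for the number of chips consumed by one firing at site $k$; by the previous lemma each site $k$ with $|k|\le t+1$ fires exactly $N_k:=2(t-|k|)+3$ times, sites with $|k|\ge t+2$ never fire, and the final number of chips at every site is determined. I will also use the elementary fact that no firing occurs at a site before the first firing at its origin-ward neighbor; this follows from a potential argument (for $k\ge 1$ the number of chips at positions $\ge k$ starts at $0$ and increases only when site $k-1$ fires), or, as it turns out, directly from statement (A) below at the adjacent level.

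The core is to establish, for each $k$ with $1\le k\le t+1$ and each admissible $j$, the two one-sided statements (together with their mirror images on the negative side): \textbf{(A)} the $j$th firing at site $k$ occurs after the $(j+1)$st firing at site $k-1$; \textbf{(B)} the $j$th firing at site $k$ occurs before the $(j+2)$nd firing at site $k-1$. Together these say the $j$th move at $k$ falls between moves $j+1$ and $j+2$ at $k-1$. Statement (A) is proved by induction on $|k|$ \emph{decreasing} from $t+1$ to $1$. Just before the $j$th firing at site $k$, site $k$ has lost $(j-1)c_k$ chips and must currently hold at least $c_k$, hence has received at least $j\,c_k$ chips; these come only from the $a$ firings so far at site $k-1$ (each sending $2^{t-|k|+1}$ chips outward) and the $b$ firings so far at site $k+1$ (each sending $2^{t-|k|}$ chips), so $2a+b\ge 3j$. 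By statement (A) at the already-treated level $|k|+1$, the $b$th firing at $k+1$ is preceded by the $(b+1)$st firing at $k$, forcing $b+1\le j-1$, i.e. $b\le j-2$; hence $a\ge j+1$, which is exactly (A). The base case $|k|=t+1$ is immediate because site $t+2$ never fires, so $b=0$ and $a\ge c_{t+1}=2$. Statement (B) is then proved by induction on $|k|$ \emph{increasing} from $1$ to $t+1$: apply the same chip count to site $k-1$ just before its $(j+2)$nd firing, using (B) at level $|k|-1$ to bound from above the number of firings at $k-2$ (in the base case $k=1$ one instead uses the mirror of (A) to bound the firings at $-1$, together with the $2^{t+2}$ chips initially at the origin), and conclude that site $k$ must already have fired at least $j$ times.

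With (A) and (B) in hand, the ``$0$ chips left behind'' statement is pure bookkeeping. Combining (A) and (B) shows that strictly between the $j$th and $(j+1)$st firings at site $k$ (for $j\ge 2$) exactly one firing occurs at $k-1$ and exactly one at $k+1$, contributing $2^{t-|k|+1}+2^{t-|k|}=c_k$ chips — exactly offsetting the $c_k$ chips spent at the $j$th firing — so the number of chips present at site $k$ is the same at each firing from the second onward. For $1\le|k|\le t$, before the first firing at site $k$ exactly two firings have occurred at $k-1$ and none at $k+1$, so site $k$ then holds $2\cdot 2^{t-|k|+1}=c_k+2^{t-|k|}$ chips; the first firing leaves $2^{t-|k|}$ behind, after which between the first and second firings only one firing at $k-1$ occurs, bringing the count back to exactly $c_k$. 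Thus every firing after the first at a site in $\{-t,\dots,t\}$ leaves nothing behind (the first being genuinely exceptional, leaving $2^{t-|k|}$ chips, resp.\ $2^{t+1}$ at the origin, which simply starts with $2^{t+2}$ chips), while the single firing at $\pm(t+1)$ occurs with exactly $c_{t+1}=2$ chips received from two firings at $\pm t$ and leaves $0$. The same relations, applied at every level, give the claimed grid structure for the whole poset (Figure~\ref{figexp}).

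I expect the main obstacle to be keeping the two-directional induction coherent: (A) must be proved from the boundary inward while (B) must be proved from the origin outward, and (B) at level $1$ invokes (A), so the logical order must be ``all of (A), then all of (B), then the chip count.'' A secondary nuisance is the several boundary and initial anomalies — sites $\pm(t+1)$ and $\pm(t+2)$ with $1$ or $0$ firings, and the origin's initial load of $2^{t+2}$ rather than $0$ — each of which has to be checked against the lockstep that the powers of $2$ enforce.
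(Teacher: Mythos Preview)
Your argument is correct and carefully organized. The two one-sided statements (A) and (B), proved by inductions running in opposite directions (inward on $|k|$ for (A) using the outer neighbor, outward on $|k|$ for (B) using the inner neighbor and the already-established (A) at the base), pin down the interlocking exactly, and the chip bookkeeping you give for the ``0 chips left'' clause checks out at every boundary case (sites $0$, $\pm(t+1)$, and the first firing at $1\le|k|\le t$).

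The paper takes a different route: it simply declares the proof ``virtually identical'' to Lemma~\ref{diamond} and supplies one representative computation. Lemma~\ref{diamond} works \emph{backward in time} from the last firing at each site (counting total firings to anchor the bottom row of the grid, then inducting up the diamond row by row), whereas you work \emph{forward in time} from received-chip lower bounds. Your decomposition into (A) and (B) with separate inductions is not something the paper articulates; what you gain is a self-contained argument that does not need the total firing numbers as an anchor, at the cost of having to run two inductions and reconcile their directions at the origin. The paper's approach buys brevity by piggybacking on the already-written Lemma~\ref{diamond}, but leaves the reader to verify that the exact powers of $2$ make the induction propagate past the point where Lemma~\ref{diamond}'s breaks down. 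Both are valid; yours is the more explicit of the two.
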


The proof is virtually identical to the proof of Lemma \ref{diamond}, but the different numbers of chips being fired in each direction enable the grid structure to extend throughout the rest of the poset.  If the second move at site $k$ is known to occur before the first move at site $k+1$ and the third move at site $k-1$, then it must take place with exactly $3(2^{t-k+1})-2^{t-k+1}-2^{t-k}=2^{t-k+1}+2^{t-k}$ chips present.  This is exactly the number needed for the site to fire, so it leaves 0 chips at that site after firing, and the move must take place after the second move at site $k-1$.  The rest of the proof uses the same induction argument as Lemma\ref{diamond}.

We then obtain the following result.  The smallest and largest chips must repeatedly return to within one site of their desired final position

\begin{lemma}\label{expbounds}
For $0 \le k \le t$, and $2 \le j \le 2(t-k)+3$, the following holds.  After the $j^{th}$ move at site $k$, the largest $2^{t-k}$ chips are at position greater than $k$.  Similarly, for $-t \le k \le 0$ and $2 \le j \le 2(t-k)+3$, we have that after the $j^{th}$ move at site $k$, the smallest $2^{t+k}$ chips are at position less than $k$.
\end{lemma}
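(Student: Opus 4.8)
The plan is to follow the template of Lemma~\ref{mainlemma}, except that the much stronger grid structure available here --- the firing order poset is a grid all the way down --- lets us avoid the diagonal double induction of that proof and instead push each relevant group of chips rightward one site at a time. By the symmetry $a\mapsto -a$ on labels together with $i\mapsto -i$ on positions, it suffices to prove the statement about the largest $2^{t-k}$ chips for $0\le k\le t$; the statement about the smallest $2^{t+k}$ chips for $-t\le k\le 0$ is its mirror image.

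First I would establish the analog of Lemma~\ref{bounds}. Fix $k$ with $0\le k\le t$ and let $c$ be one of the $2^{t-k}$ largest chips overall. At any site $k'$ with $0\le k'\le k$, a firing move sends the largest $2^{t-k'}\ge 2^{t-k}$ of the chosen chips to the right; since $c$ is among the $2^{t-k}$ largest chips present at that site, whenever $c$ is fired from such a site it is fired to the right (and if it is not among the chosen chips it simply stays put). As $c$ starts at site $0$, it follows that $c$ never occupies a negative position, and moreover that once $c$ reaches a position $\ge k$ it remains at positions $\ge k$ for the rest of the process: from site $k$ it can only move right or stay, and from a position $>k$ a leftward fire still leaves it at a position $\ge k$. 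This persistence property is the only a priori bound the argument needs.

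The heart of the proof is the claim that for every $0\le k\le t$, immediately before the second move at site $k$, all of the $2^{t-k}$ largest chips lie at positions $\ge k$. I would prove this by induction on $k$, increasing from $0$. The case $k=0$ is immediate, since by the bound above the largest $2^{t}$ chips never leave the non-negative positions. For the inductive step, observe that the $2^{t-k}$ largest chips form a subset of the $2^{t-(k-1)}$ largest chips, so by the inductive hypothesis they all lie at positions $\ge k-1$ immediately before move $(k-1,2)$, and hence --- by the persistence property --- at positions $\ge k-1$ from then on. By the grid structure of the preceding lemma, move $(k-1,3)$ occurs after move $(k-1,2)$ and before move $(k,2)$; being the third move at its site it is not the first, so it leaves $0$ chips at site $k-1$, which means every chip present there when it fires is among the chosen chips. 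Consequently each of our chips that happens to be at site $k-1$ at that instant is among the largest $2^{t-(k-1)}$ chosen chips and is fired to site $k$; since site $k-1$ is empty immediately afterward, all of the $2^{t-k}$ largest chips now lie at positions $\ge k-1$ and not equal to $k-1$, hence at positions $\ge k$, and they stay there. In particular they are at positions $\ge k$ just before move $(k,2)$.

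Given the claim, the lemma follows quickly. Fix $0\le k\le t$ and $2\le j\le 2(t-k)+3$. Move $(k,j)$ occurs no earlier than move $(k,2)$, so by the claim and the persistence property all of the $2^{t-k}$ largest chips lie at positions $\ge k$ immediately before move $(k,j)$. Since $j\ge 2$, move $(k,j)$ is not the first move at site $k$ and therefore leaves $0$ chips there, so every chip at site $k$ is fired: any of our chips at site $k$ is among the $2^{t-k}$ largest present and is fired to site $k+1$, while any of our chips already at a position $>k$ is untouched by a move at site $k$. Hence all of the $2^{t-k}$ largest chips lie at positions $>k$ after move $(k,j)$, and the negative case is the image of this argument under the label/position reversal. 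I expect the one delicate point to be identifying the correct ``slot'' in the grid --- the move $(k-1,3)$, sandwiched between moves $(k-1,2)$ and $(k,2)$ and emptying site $k-1$ --- and confirming that nothing can drag a large chip back to the left of $k$ during the (possibly many) intervening moves; the persistence property disposes of the latter uniformly, after which the rest is bookkeeping.
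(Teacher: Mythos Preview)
Your proof is correct and follows essentially the same route as the paper: induction on $k$, using the grid structure to interleave moves at adjacent sites, together with the observation that a chip among the $2^{t-k}$ largest cannot be fired left from any site $\le k$. The only difference is organizational --- you isolate this last observation as an explicit ``persistence property'' and reduce to the single instant before move $(k,2)$, whereas the paper carries the full statement (after every move $j\ge 2$) through the induction --- but the content is the same.
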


\begin{proof}
We will prove the $k\ge 0$ case, and the negative case follows similarly.  We induct on $k$.  For $k=0$, we have that site 0 fires $2^t$ chips to the right, so the largest $2^t$ chips can never be to the left of site 0.  Since all firing moves starting with the second must fire all of their chips, this means that the largest $2^t$ chips must either already be at a site $k>0$ when such a move happens, or they must be fired to the right by any such move.

Now, we suppose that this result holds for position $k-1$.  By our inductive assumption, the largest $2^{t-k+1}$ chips must be at position at least $k$ immediately after the $j^{th}$ firing move at $k-1$, for any $j\ge 2$.  Since chips can only go back to site $k-1$ through a firing move at site $k$, and since only one such move may occur between any two consecutive moves at site $k-1$, this means that whenever a move occurs at site $k$, the largest $2^{t-k+1}$ chips must be at position at least $k$.  Since any such move fires $2^{t-k}$ chips to the right, the largest $2^{t-k}$ chips must be fired to the right by this move if they aren't already at positions greater than $k$.  This completes the induction.
\end{proof}

This gives us enough information to complete our result.

\begin{theorem}
In the exponential edge problem beginning with $2^{t+2}$ chips at the origin, the final configuration has all chips in weakly sorted order.
\end{theorem}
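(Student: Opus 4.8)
The plan is to imitate the proof of Theorem~\ref{mainthm} in three stages: pin down the final configuration exactly, invoke the position bounds of Lemma~\ref{expbounds}, and then combine the two using the rigidity of the firing order poset.

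The first stage is already essentially in hand. The lemma on firing numbers, together with the observation that sites of absolute value at least $t+2$ must each hold a single chip (since one chip is never enough to fire), forces --- by the same chip-counting argument that gave $f(k-1)=f(k)+2$ --- that in the final configuration site $0$ is empty, each site $k$ with $1\le|k|\le t$ holds $2^{t-|k|+1}$ chips, and each of the four sites $\pm(t+1),\pm(t+2)$ holds exactly one chip. In particular the final configuration is completely determined, so proving that it is weakly sorted reduces to the single assertion that for every $j$ the chips that finish at positions $\le -j$ are the smallest ones, and, symmetrically, the chips finishing at positions $\ge j$ are the largest.

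For the third stage I would run the analogue of Lemma~\ref{mainlemma}. The grid-structure lemma says that every firing move other than the first at each site leaves $0$ chips behind; hence every such move fires a site holding exactly its degree in chips, so it sends a fixed-length initial block of the chips present at that site to the left and a fixed-length final block of them to the right. This makes the dynamics rigid, just as Lemma~\ref{diamond} made the bottom of the diamond rigid in Section~\ref{main}. Using Lemma~\ref{expbounds} as the seed --- it pins the extremal blocks of chips to one side of each site --- and reflecting through the origin (negating labels) to obtain the mirror statement, I would induct outward from the origin on the site index, carrying a bound on the position of each designated chip immediately before each move of the poset. Once a chip has passed its last possible move near the origin, exactly as in Theorem~\ref{mainthm}, the bound becomes its final position, and assembling these bounds over all chips yields the weakly sorted configuration identified in the first stage.

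The hard part is precisely this third stage, and the difficulty is bookkeeping rather than a new idea. Lemma~\ref{expbounds} confines only the largest $2^{t-j}$ chips to the right of site $j$, whereas site $j$ ends up holding $2^{t-j+1}$ chips --- twice as many --- so the induction cannot be closed from the ``right'' bounds alone. One must instead run it with the lower bounds coming from the left-travelling blocks and the upper bounds coming from the right-travelling blocks in tandem, matching them level by level against the exact final counts from the first stage; in particular, the fact that the smallest $2^{t+1}$ chips all finish on the left and the largest $2^{t+1}$ all finish on the right falls out of this induction, not from Lemma~\ref{expbounds} on its own. Concretely, one has to verify that the contiguous blocks emitted by successive firings merge into exactly the blocks prescribed by the weakly sorted configuration, and the grid structure is what guarantees that each merge is of two rank-adjacent blocks, so that the induction closes. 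The remaining stages are routine given Lemma~\ref{expbounds} and the grid-structure lemma.
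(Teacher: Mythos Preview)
Your proposal misidentifies the direction of the induction, and the ``hard part'' you describe is an artifact of that choice.  The paper does \emph{not} induct outward from the origin; it inducts on the site index from $k=t+1$ \emph{inward} toward $0$, and this is what makes the factor-of-two discrepancy you noticed disappear.  Concretely: by the grid lemma, the last firing at site $k+1$ occurs between the last two firings at site $k$, so at the moment of the last firing at site $k$ all firings at sites $\ge k+1$ are already complete and position $k+1$ is empty.  The inductive hypothesis says the largest $2^{t-k}$ chips already occupy their final positions $\ge k+2$; Lemma~\ref{expbounds} applied at site $k-1$ says the largest $2^{t-k+1}$ chips are at positions $\ge k$.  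Subtracting, the chips ranked $2^{t-k}+1$ through $2^{t-k+1}$ must all sit at site $k$ itself, hence are exactly the $2^{t-k}$ largest chips there, and the last firing at site $k$ sends them to their final home at $k+1$.  No tandem left/right bookkeeping, no merging of blocks.

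If you try to run the induction outward from the origin as you propose, the temporal order is against you: the last fire at site $0$ is the final move of the entire process, so you cannot use ``what happens after the last fire at site $0$'' as input to an argument about the last fire at site $1$, which has already occurred.  That is why Lemma~\ref{expbounds} at site $j$ seems to give you only half of what you need --- the other half is supplied not by the mirror bound but by the inductive hypothesis one step further out, which is only available if you induct inward.
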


\begin{proof}
We prove this by considering the last time that each chip is fired, beginning at position $t+1$ and inducting downward toward 0.  We will again show the positive case, with the negative case following by symmetry.

By Lemma \ref{expbounds}, the largest chip must be at position $t+1$ after the second firing move at position $t$.  Thus, it must be included in the lone firing move at position $t+1$, and since it is the largest chip, it must be fired to the right.  This places the largest chip at a final position of $t+2$.

Now, we assume that the largest $2^{t-k}$ chips reach their correct final positions in order to be weakly sorted.  We have that after the second to last move at site $k-1$, the largest $2^{t-k+1}$ chips must be at position at least $k$.  The largest $2^{t-k}$ chips are assumed to be in their correct final positions when the last move at site $k$ occurs, which means that the next $2^{t-k}$ chips must be the largest $2^{t-k}$ chips at site $k$.  They are thus fired to the right by the firing move at site $k$, reaching the desired final position at site $k+1$.

\end{proof}

The larger grid structure helps to simplify the proof from that of Theorem~\ref{mainthm}.  Instead of tracking chip positions throughout the process and then bounding their positions throughout the diamond, we can show that chips tend to be near where they're supposed to be for almost the entire process, finally slotting into their desired positions at the end.  The Hasse diagram for the firing move poset appears in Figure 7.

\subsection{One Self-Loop at Every Site}

We now turn to a case that is further from the original problem.  Consider the  $1$ dimensional grid graph with a self-loop at every vertex.  Every firing move requires 3 chips in order to fire, with the smallest of the three moving to the left, and the largest of the three moving to the right.

The goal of this section is to prove a previously unproved conjecture from \cite{HMP16}: for an initial configuration on this graph with $n \equiv 3$ mod $4$ chips at the origin, the final configuration is weakly sorted.  We first prove some analogous results in this setting:

\begin{lemma}\label{config}
The chip-firing process on the self-loop graph, beginning with $4m-1$ chips at the origin, terminates with 1 chip each at positions $-m$, 0, and $m$; 2 chips each at positions $-m+1$ through $-1$ and $1$ through $m-1$; and 0 chips elsewhere.
\end{lemma}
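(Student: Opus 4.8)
The plan is to reduce Lemma~\ref{config} to the standard theory of unlabeled abelian chip-firing, exactly as Theorems~\ref{UnlabeledConfiguration} and~\ref{FiringNums} are invoked for the ordinary line. The key observation is that a firing move on the self-loop graph removes $3$ chips from a site, returns one of them to the site along the self-loop, and sends one chip to each path-neighbour; hence its \emph{net} effect is $-2$ at the firing site and $+1$ at each of its two path-neighbours --- identical to an ordinary firing move --- while the threshold to fire is $3$ and a configuration is stable precisely when every site holds at most $2$ chips. Consequently the induced dynamics on configurations is an abelian chip-firing system, and the usual arguments (cf.\ \cite{ALSSTW89}, \cite[Ch.~5]{Klivans18}) apply verbatim: from a finite initial configuration the process terminates, and the terminal configuration --- together with the vector recording how often each site fires --- is independent of the order of firings. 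It therefore suffices to produce one legal firing sequence from the initial configuration $c_0$ (with $4m-1$ chips at the origin) to the configuration $c$ described in the statement.

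First I would record that $c$ is admissible: it holds $1 + 2(m-1) + 1 + 2(m-1) + 1 = 4m-1$ chips and no site holds more than $2$, so $c$ is stable. Next I would guess and verify the firing vector. Writing $u(k)$ for the number of firings at site $k$, conservation of chips gives $c(k) = c_0(k) + (\Delta u)(k)$, where $(\Delta u)(k) = u(k-1) + u(k+1) - 2u(k)$ is the path Laplacian. Imposing the symmetry $u(-k) = u(k)$ and $u(k) = 0$ for $|k| \ge m$, one is led to
\[
u(k) = \bigl(\max(0,\, m - |k|)\bigr)^2.
\]
Checking $\Delta u = c - c_0$ is a one-line computation at each site: for $|k| \ge m+1$ everything vanishes; for $|k| = m$ one has $(\Delta u)(k) = 1 + 0 - 0 = 1 = c(k)$; for $1 \le |k| \le m-1$, writing $a = m - |k|$ gives $(\Delta u)(k) = (a+1)^2 + (a-1)^2 - 2a^2 = 2 = c(k)$; and at the origin $(\Delta u)(0) = 2(m-1)^2 - 2m^2 = 2 - 4m = c(0) - c_0(0)$. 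Thus $u \ge 0$, $u$ has finite support, and $c_0 + \Delta u = c$.

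It remains to see that this $u$ is genuinely realized by the chip-firing process. Since a finitely supported harmonic function on $\mathbb{Z}$ is identically zero, $u$ is the \emph{unique} finitely supported solution of $\Delta u = c - c_0$; and by the standard exchange (abelian) argument a non-negative firing vector whose associated configuration is stable can always be scheduled as a legal firing sequence on $\mathbb{Z}$. Having produced a legal firing sequence ending in the stable configuration $c$, uniqueness of the terminal configuration identifies $c$ as the stabilization of $c_0$; reading off $u$ then shows site $k$ fires exactly $(m-|k|)^2$ times. I expect the only delicate point to be this last step --- confirming that the non-negative, finitely supported solution of the discrete Poisson equation really is the odometer, and that the self-loop variant inherits the abelian/termination machinery at all. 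Both are routine once one notes, as at the outset, that the net firing effect and the stability condition of this variant coincide with those of ordinary chip-firing, so that the least-action principle and the greedy scheduling argument transfer without change.
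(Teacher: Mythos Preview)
Your approach---computing the odometer directly and verifying the discrete Poisson equation---is genuinely different from the paper's, which instead exhibits an explicit legal firing sequence reaching $c$. Your computations are correct: $u(k)=(m-|k|)^2$ satisfies $\Delta u = c - c_0$, and $c$ is stable. The harmonic-uniqueness remark is also correct but does no work here, since a priori the true odometer $u^*$ solves $\Delta u^* = c^* - c_0$ for a possibly \emph{different} stable $c^*$.

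The gap is in the last step. The assertion that ``a non-negative firing vector whose associated configuration is stable can always be scheduled as a legal firing sequence'' is false, and this is not what the abelian/exchange lemma provides. Counterexample on the ordinary line (threshold $2$): start from $c_0=2\delta_0$ and take $v(0)=2$, $v(\pm1)=1$, $v\equiv0$ elsewhere; then $c_0+\Delta v$ is the stable configuration with one chip at each of $\pm2$, yet after the single legal fire at the origin the configuration is already stable, so $v$ cannot be scheduled. What the abelian property gives (via least action) is only the inequality $u^*\le u$, not equality.

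To close the gap you must supply the reverse bound. One clean route: for $k\ge0$ the flow identity $u^*(k)-u^*(k{+}1)=\sum_{j>k}c^*(j)$ (net rightward flow across the edge $(k,k{+}1)$ equals the chips that end up to the right of $k$) telescopes to $u^*(0)=\sum_{j\ge1} j\,c^*(j)$. Symmetry of $c^*$ and the parity of $4m-1$ force $c^*(0)=1$, hence $\sum_{j\ge1}c^*(j)=2m-1$; packing $2m-1$ chips into sites $j\ge1$ with at most two per site gives $\sum_{j\ge1} j\,c^*(j)\ge m^2$, with equality only for the configuration $c$. Combined with $u^*(0)\le u(0)=m^2$ from least action, this pins down $c^*=c$ (and then $u^*=u$ by your harmonic-uniqueness remark). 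Alternatively you can do what the paper does and simply exhibit a legal sequence. Either way, the scheduling claim as stated must be replaced.
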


\begin{proof}

We modify the proof of Theorem 2 from \cite{ALSSTW89}.  Since there are finitely many chips on an infinite, connected graph, the process must terminate in a unique final configuration.  Thus, it suffices to show a single sequence of firing moves that leads to this configuration.  We will deal with a sequence that leads to the following intermediate configurations:

\begin{center}
\begin{tabular}{ccccccc}

    &    &    & $4n-1$ &    &    &    \\
    &    & 1 & $4n-3$ & 1 &    &    \\
    &    & 2 & $4n-5$ & 2 &    &    \\
    & 1 & 2 & $4n-7$ & 2 & 1 &    \\
    & 2 & 2 & $4n-9$ & 2 & 2 &    \\
 1 & 2 & 2 & $4n-11$ & 2 & 2 & 1 \\
 2 & 2 & 2 & $4n-13$ & 2 & 2 & 2 \\
    &    &    & $\vdots$ &    &    &   
    
\end{tabular}
\end{center}

In each step, we remove 2 chips from the origin and place 1 chip at each the two closest sites to the origin that do not yet have 2 chips.  Each step only requires the use of 3 chips at the origin, so we treat each case as if only 3 chips are present there initially.  In case 1 (2 chips initially at the farthest sites), we make all possible firing moves simultaneously and obtain the following pattern:

\begin{center}
\begin{tabular}{ccccccccccc}

   & 2 & 2 & 2 & 2 & 3 & 2 & 2 & 2 & 2 &   \\
   & 2 & 2 & 2 & 3 & 1 & 3 & 2 & 2 & 2 &   \\
   & 2 & 2 & 3 & 1 & 3 & 1 & 3 & 2 & 2 &   \\
   & 2 & 3 & 1 & 3 & 1 & 3 & 1 & 3 & 2 &   \\
   & 3 & 1 & 3 & 1 & 3 & 1 & 3 & 1 & 3 &   \\
1 & 1 & 3 & 1 & 3 & 1 & 3 & 1 & 3 & 1 & 1\\
1 & 2 & 1 & 3 & 1 & 3 & 1 & 3 & 1 & 2 & 1\\
1 & 2 & 2 & 1 & 3 & 1 & 3 & 1 & 2 & 2 & 1\\
1 & 2 & 2 & 2 & 1 & 3 & 1 & 2 & 2 & 2 & 1\\
1 & 2 & 2 & 2 & 2 & 1 & 2 & 2 & 2 & 2 & 1\\
\end{tabular}
\end{center}

And in case 2, with 1 chip at the farthest sites initially:

\begin{center}
\begin{tabular}{ccccccccccc}

1 & 2 & 2 & 2 & 2 & 3 & 2 & 2 & 2 & 2 & 1\\
1 & 2 & 2 & 2 & 3 & 1 & 3 & 2 & 2 & 2 & 1\\
1 & 2 & 2 & 3 & 1 & 3 & 1 & 3 & 2 & 2 & 1\\
1 & 2 & 3 & 1 & 3 & 1 & 3 & 1 & 3 & 2 & 1\\
1 & 3 & 1 & 3 & 1 & 3 & 1 & 3 & 1 & 3 & 1\\
2 & 1 & 3 & 1 & 3 & 1 & 3 & 1 & 3 & 1 & 2\\
2 & 2 & 1 & 3 & 1 & 3 & 1 & 3 & 1 & 2 & 2\\
2 & 2 & 2 & 1 & 3 & 1 & 3 & 1 & 2 & 2 & 2\\
2 & 2 & 2 & 2 & 1 & 3 & 1 & 2 & 2 & 2 & 2\\
2 & 2 & 2 & 2 & 2 & 1 & 2 & 2 & 2 & 2 & 2\\
\end{tabular}
\end{center}

In both cases, there is a line of alternating 3's and 1's that expands until extra chips are deposited at the farthest points, and then contracts again.  Starting with $4m-1$ chips and then alternating between configurations of the two above forms until no more firing moves may be performed yields a configuration of the desired form.

\end{proof}

\begin{lemma}
In the chip-firing process on the self-loop graph, beginning with $4m-1$ chips at the origin, and running to completion, the number of firing moves at site $k$ for $-m \le k \le m$ is equal to $(m-|k|)^2$.
\end{lemma}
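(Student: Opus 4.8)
The plan is to pin down the full firing vector (the \emph{odometer}) $f$, where $f(k)$ denotes the number of firing moves at site $k$, by the standard conservation-of-chips argument, taking the terminal configuration as input from Lemma~\ref{config}. First I would observe that although a firing move at a site in the self-loop graph uses three chips, the self-loop merely returns one of them to that site; hence the \emph{net} effect of a firing move at $k$ is to remove $2$ chips from $k$ and deposit one chip at each of $k-1$ and $k+1$ --- exactly as in ordinary chip-firing on the path, so the self-loop does not alter the relevant discrete Laplacian. Writing $c_0$ for the initial configuration ($c_0(0)=4m-1$ and $c_0(k)=0$ otherwise) and $c$ for the terminal configuration given by Lemma~\ref{config}, summing the net effect over all firing moves yields the discrete Poisson equation
$$ f(k-1) - 2f(k) + f(k+1) \;=\; c(k) - c_0(k) \qquad \text{for every } k \in \mathbb{Z}. $$
Since the process terminates after finitely many moves, $f$ has finite support. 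Any finitely supported function on $\mathbb{Z}$ killed by the operator $g \mapsto g(k-1)-2g(k)+g(k+1)$ is affine in $k$, and an affine function with finite support is identically $0$; therefore the equation above has a \emph{unique} finitely supported solution, and it suffices to exhibit one.

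The second step is then a direct verification that $f(k) = (m-|k|)^2$ for $|k|\le m$ and $f(k)=0$ for $|k|>m$ solves the equation. By the symmetry of the problem it is enough to check $k\ge 0$. For $1\le k\le m-1$ one computes $(m-k+1)^2 - 2(m-k)^2 + (m-k-1)^2 = 2$, which equals $c(k)-c_0(k) = 2-0$; at $k=0$ one gets $2(m-1)^2 - 2m^2 = -(4m-2)$, matching $c(0)-c_0(0) = 1-(4m-1)$; at $k=m$ one gets $1 - 0 + 0 = 1 = c(m)-c_0(m)$; and for $k\ge m+1$ both sides are $0$. Since the candidate satisfies the equation in every case and has finite support, uniqueness forces $f(k)=(m-|k|)^2$, which is the claim.

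The argument is short precisely because the substantive content has already been done in Lemma~\ref{config}; the only points needing care are (i) confirming that the self-loop contributes nothing to the net chip transfer of a firing move, so that the governing operator is genuinely the path Laplacian, and (ii) using finiteness of the support of the odometer to obtain uniqueness. Neither is a real obstacle. An alternative route, parallel to the treatment of Theorem~\ref{FiringNums}, would be to derive the recurrence $f(k-1)-f(k) = f(k)-f(k+1)+2$ directly from the requirement that each site with $1\le |k|\le m-1$ stabilizes holding exactly two chips, and then solve it with the boundary values $f(\pm m)=0$ read off from Lemma~\ref{config}; this produces the same formula but is strictly more computational, so I would present the Poisson-equation version.
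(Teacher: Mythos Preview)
Your proof is correct. Both your argument and the paper's rest on the same substantive input (the terminal configuration from Lemma~\ref{config}) and the same underlying principle (chip conservation determines the odometer), but the bookkeeping is organized differently. The paper derives a \emph{first-order} recurrence by a flux argument across each edge: the chips sent right from site $k$ must equal those sent left from $k+1$ plus whatever remains at sites $>k$, yielding $f(k)=f(k+1)+2(m-k)-1$ for $1\le k\le m-1$, together with a separate relation at $k=0$, and then asserts that the system has the unique solution $(m-|k|)^2$. You instead write the \emph{second-order} discrete Poisson equation $f(k-1)-2f(k)+f(k+1)=c(k)-c_0(k)$ at each site and invoke the standard fact that the path Laplacian has trivial finitely supported kernel. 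Your framing is the classical odometer approach in the abelian-sandpile literature and has the advantage that uniqueness is cleanly justified rather than merely asserted; the paper's flux recurrence is perhaps more hands-on but amounts to the telescoped form of the same equation. Your observation that the self-loop contributes nothing to the net chip transfer (the middle chip stays put) is correct and is exactly what makes the ordinary path Laplacian govern the process.
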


\begin{proof}
We will prove the results for nonnegative $k$, and negative $k$ will follow by symmetry.  Since the final configuration has no chips past position $m$, there must be no firing moves at position $m$ throughout the process.  For positions $1 \le k \le m-1$, the number of chips sent right by site $k$ must be equal to the number of chips sent to the left by $k+1$, plus the number of chips remaining at sites greater than $k$.  Let $f(k)$ be the number of firing moves at site $k$.  This gives the recurrence

$$f(k)=f(k+1)+2(m-k)-1\text{ for } 1\le k \le m-1$$

A similar argument gives the following recurrence relation for $k=0$:

$$f(0)=\frac{1}{2}(f(1)+f(-1)+4m-2)$$

If we define the first recurrence analogously for negative $k$, these recurrences have a unique solution with each site $k$ firing $(m-|k|)^2$ times.

\end{proof}

These lemmas allow us to prove the existence of an identical diamond to the one from Lemma \ref{diamond}.  As the proof of its existence is virtually identical to the proof of Lemma \ref{diamond}, we omit it here.

Now, Lemma \ref{config} suggests a new chip-numbering scheme for this problem.  If we perform this chip-firing process beginning with $4m-1$ chips at the origin, we label the chips as follows: we assign the labels $-m$, 0, and $m$ to one chip each, and we assign the labels $1-m$ through $-1$ and $1$ through $m-1$ to two chips each.  If we perform a firing move involving two chips with the same label, we arbitrarily treat one of them as the smaller chip.  The weak sorting result is now similar to Theorem \ref{mainthm}; we prove that the final position of each chip is equal to its label.

We first need bounds analogous to Lemma 12 from \cite{HMP16}.  We omit the proof here, it is similar to the proof of  Lemma 12.

\begin{lemma}\label{loopBounds}
Throughout the chip firing process, the position of a chip numbered $k$ $(k>0)$ must always be between $\lfloor \frac{k-m}{2} \rfloor$ and  $\lfloor \frac{k+m}{2} \rfloor$ inclusive.  The position of a chip numbered $k$ $(k<0)$ must always be between $\lceil \frac{k-m}{2} \rceil$ and  $\lceil \frac{k+m}{2} \rceil$ inclusive.  Chip 0 must remain between position $\lceil \frac{-m}{2} \rceil$ and  $\lfloor \frac{m}{2} \rfloor$ inclusive.  Furthermore, if $k+m$ is even (for the lower bounds) or $m-k$ is even (for the upper bounds), at most one chip with a given label may satisfy the equality cases of the above bounds at any given time.
\end{lemma}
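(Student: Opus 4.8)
The plan is to prove Lemma~\ref{loopBounds} by induction on the chip label, mirroring the argument for Lemma~\ref{bounds} (and Lemma~12 of \cite{HMP16}), but carrying the ``at most one chip at the extremal site'' assertion along as part of the inductive hypothesis: in the self-loop setting the plain position bounds for smaller labels are, by themselves, not quite strong enough to close the induction, and this count statement is exactly what makes up the deficit.

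First I would invoke the symmetry of the process that negates every label and every site; it preserves the firing rule (the smallest of the three chosen chips goes left) and exchanges the upper bounds with the lower bounds, so it suffices to prove all of the \emph{upper} bounds, after which every lower bound, and in particular the lower bound for chip $0$, follows. Write $u_k$ for the claimed upper bound of chip $k$. Two elementary facts extracted from the floor/ceiling formulas drive the induction: $k\mapsto u_k$ is nondecreasing and changes by $0$ or $1$ at each step; and for a fixed $k$ the set of labels $\ell<k$ with $u_\ell=u_k$ is a short, explicitly computable list --- in the main range just $\{k-1\}$, and only when $k+m$ is even, with a couple of extra cases near $0$ and near the ends to be checked directly. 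The induction runs over the labels in increasing order $-m,-m+1,\dots,m$; at stage $k$ we prove that chip $k$ never occupies a site exceeding $u_k$ \emph{and} that, when $m-k$ is even, two chips of label $k$ are never simultaneously at site $u_k$.

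The base case $k=-m$ is immediate: chip $-m$ is the global minimum, so it is never the largest of three chosen chips, hence never fired rightward, hence remains at sites $\le 0=u_{-m}$; the co-location clause is vacuous as there is only one chip of this label. For the inductive step, fix $k$, assume both assertions for all smaller labels, and suppose for contradiction that some firing move carries a chip $c$ of label $k$ from site $u_k$ to site $u_k+1$. Since a firing move chooses three chips and $c$ must be the largest of them, the other two lie at site $u_k$ with label $\le k$; as at most one \emph{other} chip has label $k$, at least one of them --- a porter --- has label $\ell\le k-1$ and is at site $u_k$. The inductive position bound for $\ell$ then forces $u_\ell\ge u_k$, so $\ell$ belongs to the short list above; generically this pins $\ell=k-1$, forces $m-k$ even, and --- since we need \emph{two} porters of label $\le k-1$ --- forces both chips of label $k-1$ to sit at the common site $u_k=u_{k-1}$ at that instant. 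A final sub-argument, obtained by running the same analysis one level down and using the inductive co-location clauses for the labels $\le k-2$ to control which small-label chips can occupy sites $u_{k-1}$ and $u_{k-1}-1$, rules this out. The co-location clause for label $k$ itself is obtained identically, applied to a move that would deposit a second chip of label $k$ onto an already-occupied copy of site $u_k$. The label $0$ is simply the middle stage of this chain, fed by the negative labels handled just before it. The remaining boundary cases (short $\ell$-lists near $0$ and near $\pm m$) are dispatched by the same scheme.

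The main obstacle is the parity-even case, where $u_{k-1}=u_k$ and the bare position bounds of the smaller labels leave exactly enough room for the forbidden configuration; there one genuinely needs the count information, and the delicate points are to verify that propagating only the single fact ``at most one chip of label $\ell$ ever sits at site $u_\ell$'' down the chain really does block every cascade of the sort appearing in the sub-argument above, and to order the deductions so that the position bound and the co-location clause for a given label are each derived from strictly-smaller-label data alone, keeping the induction non-circular. This is precisely the content, and the difficulty, of Lemma~12 of \cite{HMP16}; the surrounding bookkeeping --- the symmetry reduction, the monotone behavior of the extreme chips, and the observation that a bound holding just before one firing move persists until the next because no move occurs at the just-vacated site --- is routine.
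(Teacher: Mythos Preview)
Your route differs from the paper's. The paper does not spell out a proof but points to the method of Lemma~12 in \cite{HMP16}: to bound chip $k$ from above one compares the full process to the self-loop process run on only the chips of label at most $k$, and shows by a coupling/monotonicity argument that the multiset of positions of those chips in the full process is always dominated by their positions in the reduced one. The final configuration of the reduced process is explicit---with $2(m+k)$ chips its rightmost occupied site is exactly $u_k$, carrying one chip when $m+k$ is even and two when $m+k$ is odd---so both the position bound and the co-location clause drop out simultaneously, with no parity casework inside an induction.

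Your direct-induction scheme has a gap at precisely the step you flag as delicate. First a parity slip: for $k>0$ one has $u_{k-1}=u_k$ exactly when $m-k$ is \emph{odd}; the co-location clause is asserted when $m-k$ is even, which is the easy case $u_{k-1}<u_k$. The hard case is $m-k$ odd, where only the position bound is to be proved for label $k$. There $u_{k-1}=u_k$ and $u_{k-2}=u_k-1$, so the chips of label $\le k$ eligible to sit at $u_k$ are the two $k$-chips together with, by the already-proved co-location clause for $k-1$, at most one $(k-1)$-chip---three chips in all, just enough to fire and send a $k$-chip to $u_k+1$. Your stated inductive hypotheses do not forbid this configuration, and the ``sub-argument'' tracing how the second $k$-chip reached $u_k$ does not close: that chip was fired right from $u_k-1=u_{k-2}=u_{k-3}$, where chips of labels $k-1$, $k-2$, $k-3$ are all permitted and both $(k-2)$-chips may sit (since $m-(k-2)$ is odd and no co-location clause applies to them), so porters are plentiful. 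To make an induction of this shape go through you would need to carry a stronger invariant such as ``at most two chips of label $\le k$ ever occupy site $u_k$ simultaneously''---which is exactly what the comparison argument yields for free from the shape of the reduced stable configuration.
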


The proof of the corresponding lemma in \cite{HMP16} works by comparing the  positions of all chips greater than or equal to $k$ to the final positions that those chips would have occupied if all smaller chips had been removed from the process at the beginning.  The conclusion that the leftmost possible position for chip $k$ cannot be further left that the farthest left position in the other process still applies here.

We then want to show how many chips with labels in certain ranges can appear in certain regions.  Such bounds will often result from showing that a violation of these conditions would force certain chips to violate their bounds from Lemma \ref{loopBounds}.

Given an instance of the labeled chip-firing process $p$ and a chip $c$, define $f_p(c)$ to be the first of the diamond firing moves of $p$ for which chip $c$ is present.  Then define $s_p(c)$ to be the position at which that firing move occurs.  Note that $s_p$ maps each chip to a site, and thus defines a chip configuration.  We refer to this as the \textit{diamond configuration} of the process $p$.

We then have the following lemma:

\begin{lemma}\label{lemma1}
For any $k$ with $-m-1 \le k \le 0$, and any $l$ with $0 \le l \le k+m-1$, there must be at most $k+m-l-1$ chips $c$ such that the value of chip $c$ is less than $k$ and such that $s_p(c)>l$.
\end{lemma}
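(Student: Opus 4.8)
The plan is to fix $k$ with $-m+1 \le k \le 0$ (for $k \le -m$ there is no chip of value less than $k$, so the statement is vacuous) and induct downward on the site threshold $l$. The base case $l = k+m-1$ asserts that no chip of value less than $k$ has $s_p(c) \ge k+m$: since $s_p(c)$ is a site that $c$ occupies at some moment of the process and, by Lemma~\ref{loopBounds}, a chip of value $j \le k-1 < 0$ never reaches a site exceeding $\lceil (j+m)/2 \rceil \le \lceil (k-1+m)/2 \rceil \le k+m-1$, this is immediate. In fact Lemma~\ref{loopBounds} settles the inductive step outright whenever $l > (k+m-2)/2$: for such $l$ there is no value $j$ with $2l-m+1 \le j \le k-1$, so no chip of value less than $k$ can ever occupy site $l+1$, and in particular none enters the diamond there.

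For the remaining, small values of $l$, write the count for $(k,l)$ as the count for $(k,l+1)$ plus the number of chips of value less than $k$ that enter the diamond at exactly site $l+1$. By the inductive hypothesis the first term is at most $k+m-l-2$, so the step reduces to the claim that at most one chip of value less than $k$ is present at site $l+1$ at the moment of the first diamond move there. This ``at most one'' claim is the crux, and the mechanism behind it is resource scarcity: a chip leaves a site rightward only as the largest of the three chips fired there, so a low-value chip that is ever at a positive site $p$ was, at the first moment it reached $p$, the largest of three chips fired at site $p-1$, the other two of which also have value less than $k$. Applying this repeatedly and combining it with the position bounds of Lemma~\ref{loopBounds} --- including the parity-sensitive refinement that at most one chip of a given label may occupy its extreme site at a time, which is needed because the plain bound $\lceil (j+m)/2 \rceil$ can be attained simultaneously by chips of two distinct labels --- forces the number of chips of value less than $k$ involved to exceed the available supply $1 + 2(k+m-1)$ whenever two of them sit at site $l+1$ together, a contradiction.

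The main obstacle is making this crowding estimate precise: one must isolate the right quantity to track (roughly, the total rightward displacement that the chips of value less than $k$ can collectively achieve, constrained site-by-site by Lemma~\ref{loopBounds}) and check that it cannot accommodate two low-value chips at site $l+1$. A cleaner but essentially equivalent route, following the proof of Lemma~\ref{loopBounds} and its counterpart in \cite{HMP16}, is to compare the positions of the chips of value at least $k$ with the positions they would occupy in the process obtained by deleting every chip of value less than $k$ at the start; bounding the displacement the deleted chips can cause then gives the lemma directly. Either way, once the ``at most one'' claim holds the downward induction on $l$ closes and the lemma follows.
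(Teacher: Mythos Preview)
Your downward induction on $l$ has a real gap, and it is not merely a matter of making the crowding estimate precise. The inductive step asks for the per-site bound
\[
\#\{c : \text{value}(c)<k,\ s_p(c)=l+1\}\le 1,
\]
but the lemma is a \emph{cumulative} bound, not a site-by-site one: nothing prevents two chips of value less than $k$ from having $s_p(c)=l+1$ at some $l$, provided this is compensated by zero such chips at another site. For instance, with $k=0$ and $l=0$, Lemma~\ref{loopBounds} alone does not rule out one chip labeled $-1$ and one labeled $-2$ both sitting at site $1$ in the diamond configuration (the parity clause of Lemma~\ref{loopBounds} only forbids two chips of the \emph{same} label at the extreme site). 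So the induction as written does not close, and neither of your sketched remedies addresses this; the ``resource scarcity'' argument you outline would at best reprove the cumulative bound, not the per-site one you need.

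You are also conflating two different events: ``$c$'s first diamond move is at site $l+1$'' and ``$c$ is present at the first diamond move at site $l+1$.'' A chip can arrive at $l+1$ between diamond moves and first enter the diamond at a later move there.

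The paper sidesteps all of this with one structural observation you are missing: the map $s_p$ is itself a \emph{reachable} chip configuration. One reorders the process so that all non-diamond moves come first; the configuration at that moment has exactly $3$ chips at the origin and $2$ at every other site from $-m+1$ to $m-1$, and it coincides with $s_p$. Once you know $s_p$ is reachable with this explicit shape, Lemma~\ref{loopBounds} applies to it directly, and a one-line pigeonhole on the sites $l+1,\dots,m-1$ gives the bound with no induction on $l$ at all.
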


\begin{proof}
We will proceed by contradiction.  In particular, we will show that if the diamond configuration violates these bounds, then there must be a reachable configuration violating Lemma \ref{loopBounds}.  In particular, the diamond configuration must be such a configuration.

First, we must show that the diamond configuration is reachable.  Given a process $p$, define process $p'$ as follows: complete all firing moves in $p$ in the same order, but skip all diamond firing moves.  Then perform all diamond firing moves in the same order in which they occurred in $p$.  Since all diamond firing moves must occur after all non-diamond moves at neighboring sites, all of the moves in $p'$ are legal, and performing all of them in this order produces a configuration $s'$ after all of the non-diamond moves have been performed.  The number of firing moves that has occurred at any site $k$ is equal to $(m-|k|)^2-|k|$, so the resulting configuration $s'$ has 3 chips at the origin and 2 chips each at every other site from $-m+1$ to $m-1$.

We will show that this configuration $s'$ is actually equal to the diamond configuration $s_p$.  Any chip's position in $s'$ must also be the location of the first diamond move that it's involved in in $p'$, since the next firing move involving that chip must be a diamond move, and since all sites with chips must fire at least one more time.  Furthermore, since any diamond move in $p'$ must still occur after the same firing moves at the corresponding site and neighboring sites as in $p$, all diamond moves in $p'$ must contain the same chips as in $p$.  As a result, we get that $s_p=s_{p'}$, and since $s_{p'}$ is a reachable configuration, then $s_p$ must be as well.

Now, we can prove the desired chip bounds by contradiction.  Suppose that the number of chips with values than $k$ at positions greater than $l$ is at least $k+m-l$.  Then the diamond configuration must have a chip at position at least $l+\lceil\frac{k+m-l}{2}\rceil=\lceil\frac{k+m+l}{2}\rceil$ with value less than $k$.  If $l>0$, then a chip with value at most $k-1$ would have to reach position $\lceil\frac{k+m+2}{2}\rceil$, and if $l=0$, then two chips with values at most $k-1$ would have to reach position $\lceil \frac{k+m+1}{2}\rceil$.  Both of these are prohibited by Lemma \ref{loopBounds}.  Since the diamond configuration is a reachable configuration, it must satisfy Lemma \ref{loopBounds}, so this is a contradiction.  Thus, the diamond configuration may not have more than $k+m-l-1$ chips with values less than $k$ at positions greater than $l$, as desired.
\end{proof}

We can now move on to the main lemma for the self-loop labeled chip firing problem:

\begin{lemma}\label{mainlemma2}
Let $-m+1 \le k \le 0$ and $1 \le j \le m-|k|$.  After the $j^{th}$ diamond firing move at site $k$, there are at least $j+k+m-1$ chips with values less than $k$ at positions less than $k$.
\end{lemma}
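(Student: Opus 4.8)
The plan is to run the argument of Lemma~\ref{mainlemma} in ``bulk'' form: because the chips now occur in equal pairs and every diamond firing move leaves one chip behind instead of none, we cannot pin down where an individual chip sits, so instead we track a lower bound on the \emph{number} of small-valued chips that have already been driven left of site $k$. Concretely I would argue by a double induction, the outer one on $k$ running from $-m+1$ up to $0$ and the inner one on $j$ running from $1$ up to $m-|k|$. The two inductions interleave consistently: exactly as in Lemma~\ref{diamond}, the grid structure (the analogue of Lemma~\ref{diamond} asserted just before Lemma~\ref{config}) forces the $j$-th diamond move at site $k-1$ to occur strictly between the $j$-th and $(j{+}1)$-th diamond moves at site $k$, and within the entire block of diamond moves at site $k$ the only firing moves occurring at sites $k-1$ and $k$ are the $m-|k|$ diamond moves at site $k$ and the $m-|k|-1$ diamond moves at site $k-1$, occurring alternately.

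The tools are: (i) the grid-structure lemma, so that every diamond move at site $k$ fires with exactly three chips present and sends the smallest of the three to site $k-1$; (ii) Lemma~\ref{loopBounds}, which confines chip $-m$ to the interval $[-m,0]$ and, more generally, keeps every chip of value $<k$ far enough to the left that it cannot be the largest chip present once it reaches the vicinity of site $k$ — together with its ``at most one chip with a given label at the boundary'' refinement; and (iii) Lemma~\ref{lemma1}, which caps the number of value-$<k$ chips whose first diamond move lies at a site $>l$ and hence bounds from below the number of value-$<k$ chips already sitting at positions $\le l$ before the block of diamond moves at site $k$ begins. It is convenient to pass through the auxiliary process $p'$ built in the proof of Lemma~\ref{lemma1} (do all non-diamond moves, reaching the diamond configuration $s_p$, then all diamond moves in the order of $p$): $p'$ has the same chip content at each diamond move as $p$, so it suffices to propagate the constraints on $s_p$ forward through the diamond moves and then transfer the conclusion back to $p$.

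For the base case $k=-m+1$ there is a single diamond move at site $-m+1$, and the claim reduces to showing that chip $-m$ is at position $-m$ afterwards; this is immediate, since by Lemma~\ref{config} and Lemma~\ref{loopBounds} the lone chip at site $-m$ in the terminal configuration is chip $-m$, it can only have arrived there through that one firing move at site $-m+1$, and being the smallest chip it is the one sent left. For the inductive step, let $M$ be the $j$-th diamond move at site $k$, with three chips present, and let $N$ be the $(j{-}1)$-th diamond move at site $k-1$, which is the only move at sites $k-1,k$ since the previous diamond move at site $k$. The inner hypothesis gives at least $j+k+m-2$ value-$<k$ chips at positions $<k$ just after that previous move; one then shows, using the outer hypothesis at site $k-1$ and Lemma~\ref{loopBounds}, that the chip $N$ ships from $k-1$ to $k$ has value $\ge k$ (so this count is not spoiled), and, using Lemma~\ref{loopBounds} and Lemma~\ref{lemma1}, that the smallest of the three chips at $M$ has value $<k$ (so $M$ transfers a previously uncounted value-$<k$ chip to site $k-1$). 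Since no move occurs at site $k-1$ between $M$ and the next diamond move at site $k$, the count reaches and holds at $j+k+m-1$, as required; the positive side is symmetric.

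The hard part — and where this genuinely goes beyond Lemma~\ref{mainlemma} — is the bookkeeping in the inductive step: one must show \emph{simultaneously} that each $M$ ships a fresh value-$<k$ chip leftward and that no $N$ ships a value-$<k$ chip rightward, and, because individual chips are no longer determined, this forces reasoning about label multiplicities. That is exactly the point where the ``at most one chip of a given label at the boundary'' clause of Lemma~\ref{loopBounds} and the triangular cap of Lemma~\ref{lemma1} must be combined, the comparison with $s_p$ through the process $p'$ being the natural vehicle. A secondary, easier nuisance is the behaviour at the extreme-left vertices of the diamond — the site $-m+1$ that fires only once, and more generally the once-firing sites near $\pm(m-1)$ — which, as flagged in the remark after Lemma~\ref{diamond}, break the uniform pattern and must be checked directly; in each such case the relevant count is forced at once by Lemma~\ref{loopBounds} and the shape of the terminal configuration given by Lemma~\ref{config}.
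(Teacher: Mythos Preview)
Your overall strategy matches the paper's: a double induction on $k$ (outer) and $j$ (inner), using the grid structure, Lemma~\ref{loopBounds}, Lemma~\ref{lemma1}, and the auxiliary process $p'$. The base case $k=-m+1$ is handled the same way. But the mechanics of your inductive step diverge from the paper's, and the two claims you rest it on are not justified.

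Your claim that the move $N$ (the $(j{-}1)$-th diamond move at $k-1$) always ships a chip of value $\ge k$ to the right does not follow from the outer hypothesis at $k-1$ together with Lemma~\ref{loopBounds}. Nothing prevents all three chips present at site $k-1$ during $N$ from having values $<k$: the two chips at $k-1$ in the diamond configuration may both be small, and the chip delivered from site $k$ by the preceding move is the \emph{smallest} of three and hence may also be small. In that situation the largest chip (sent right) has value $<k$ and the count drops. The paper explicitly allows this: when it happens, the middle chip left behind at $k-1$ still has value $<k$, and the outer hypothesis at $k-1$ supplies $\ge j+k+m-2$ chips of value $<k-1$ at positions $<k-1$; adding the stay-behind chip recovers the bound $\ge j+k+m-1$. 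Your argument lacks this recovery step.

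Your second claim, that $M$ always ships a value-$<k$ chip leftward, is also not automatic: nothing rules out all three chips at site $k$ during $M$ having values $\ge k$. The paper does not argue move-by-move that $M$ increments the count. Instead it applies Lemma~\ref{lemma1} with $l=j$ to conclude that at least $k+m+j$ value-$<k$ chips have diamond-configuration position $\le j$, deduces that if the count at positions $<k$ has not yet reached $k+m+j$ then some value-$<k$ chip sits at a site in $[k,j]$ at its first diamond move, and then traces the smallest such chip leftward along a row of the diamond until it is fired across site $k$. That is the actual content behind ``combining the triangular cap of Lemma~\ref{lemma1} with Lemma~\ref{loopBounds},'' and it is what your sketch is missing.
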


\begin{proof}

We will induct first on $j$, and then on $k$.  For the case $k=-m+1$, there is only one firing move at that site, and there are no chips at smaller positions prior to that firing move.  Since the chip labeled $-m$ can never reach a position greater than 0, it must occupy a non-positive position in the diamond configuration, so it must be present for the first diamond move at some site with position at most 0 (corresponding to moves with $y$ values of $m$ in our labeling from \ref{mainlemma}.  Since it is the smallest chip, and since all diamond moves take place with exactly 3 chips present, it must then be fired to the left at every site it reaches until it is fired to the left at site $-m+1$ to its final position of $-m$.  Thus, after the 1 firing move at site $-m+1$, there is 1 chip with value less than $-m+1$ at site $-m$, satisfying the necessary conditions for $k=1$.

We then fix $k>-m+1$ and induct on $j$.  If we begin with the degenerate $j=0$, we have that before the first diamond move at site $k$, there are at most $k+m-1$ chips with values greater than or equal to $k$ at positions less than $k$ by \ref{lemma1}.  Since there are $2(k+m)-2$ total chips at positions less than $k$ prior to that move, at least $k+m-1$ of them must have values less than $k$.

Now, we suppose that our bounds hold for a particular value of $j$.  We then consider diamond move $j+1$ at site $k$.  We know that after move $j$ at this site, there are at least $j+k+m-1$ chips with values less than $k$ at positions less than $k$.  It is possible for this number to decrease by 1 after the $j^{th}$ firing move at site $k-1$, but this will only occur if all three of the chips at that site have values less than $k$.  Since there are at least $j+(k-1)+m-1$ chips with values less than $k-1$ at positions less than $k-1$, along with the chip that remains at position $k$ with value less than $k$ after that move occurs, this means that after the $j^{th}$ diamond move at site $k-1$, there will still be at least $j+k+m-1$ chips with values less than $k$ at positions less than $k$.

Now, from \ref{mainlemma2}, we have that when the first diamond move at site $j$ occurs, there are at most $k+m-j-1$ chips with values less than $k$ and positions greater than $j$.  Since there are $2(k+m)-1$ total chips with values less than $k$, there must be at least $k+m+j$ chips with values less than $k$ at positions less than or equal to $j$.  Thus, if there are not already $k+m+j$ chips with such values at positions less than $k$, then there must be at least one more at position less than or equal to $j$ when its first diamond move occurs.  If we use the labeling scheme from \ref{mainlemma}, then this implies that some chip with value less than $k$ will be involved in some firing move with $y=j-1$ and $x-y \ge k$.  If we consider the smallest label of all such chips, then that chip will be fired to the left by firing moves with $y=j-1$ until it is fired to the left by move $(k+j-1,j-1)$, which is the $j^{th}$ move at site $k$.  Since this chip has a label less than $k$, the number of chips with labels less than $k$ at positions less than $k$ will increase by $1$ if that number was previously equal to its minimum possible value of $k+m+j-1$.  As a result, this number is at least $k+m+j$ after the firing move occurs, and the induction is complete.

\end{proof}

\begin{center}
\begin{tikzpicture}

\draw[black, thick] (0,0) -- (4,4);
\draw[black, thick] (1,-1) -- (5,3);
\draw[black, thick] (2,-2) -- (6,2);
\draw[black, thick] (3,-3) -- (7,1);
\draw[black, thick] (4,-4) -- (8,0);
\draw[black, thick] (0,0) -- (4,-4);
\draw[black, thick] (1,1) -- (5,-3);
\draw[black, thick] (2,2) -- (6,-2);
\draw[black, thick] (3,3) -- (7,-1);
\draw[black, thick] (4,4) -- (8,0);

\filldraw[black] (0,0) circle (2pt) node[left] {$1\text{ }$};
\filldraw[black] (1,1) circle (2pt) node[left] {$2\text{ }$};
\filldraw[black] (1,-1) circle (2pt) node[left] {$3\text{ }$};
\filldraw[black] (2,2) circle (2pt) node[left] {$3\text{ }$};
\filldraw[black] (2,0) circle (2pt) node[left] {$4\text{ }$};
\filldraw[black] (2,-2) circle (2pt) node[left] {$5\text{ }$};
\filldraw[black] (3,3) circle (2pt) node[left] {$4\text{ }$};
\filldraw[black] (3,1) circle (2pt) node[left] {$5\text{ }$};
\filldraw[black] (3,-1) circle (2pt) node[left] {$6\text{ }$};
\filldraw[black] (3,-3) circle (2pt) node[left] {$7\text{ }$};
\filldraw[black] (4,4) circle (2pt) node[left] {$5\text{ }$} node[right] {$\text{ }5$};
\filldraw[black] (4,2) circle (2pt) node[left] {$6\text{ }$} node[right] {$\text{ }6$};
\filldraw[black] (4,0) circle (2pt) node[left] {$7\text{ }$} node[right] {$\text{ }7$};
\filldraw[black] (4,-2) circle (2pt) node[left] {$8\text{ }$} node[right] {$\text{ }8$};
\filldraw[black] (4,-4) circle (2pt) node[left] {$9\text{ }$} node[right] {$\text{ }9$};
\filldraw[black] (5,3) circle (2pt) node[right] {$\text{ }4$};
\filldraw[black] (5,1) circle (2pt) node[right] {$\text{ }5$};
\filldraw[black] (5,-1) circle (2pt) node[right] {$\text{ }6$};
\filldraw[black] (5,-3) circle (2pt) node[right] {$\text{ }7$};
\filldraw[black] (6,2) circle (2pt) node[right] {$\text{ }3$};
\filldraw[black] (6,0) circle (2pt) node[right] {$\text{ }4$};
\filldraw[black] (6,-2) circle (2pt) node[right] {$\text{ }5$};
\filldraw[black] (7,1) circle (2pt) node[right] {$\text{ }2$};
\filldraw[black] (7,-1) circle (2pt) node[right] {$\text{ }3$};
\filldraw[black] (8,0) circle (2pt) node[right] {$\text{ }1$};

\node at (4,5.5) {Sites};
\node at (-1,5) {$-5$};
\node at (0,5) {$-4$};
\node at (1,5) {$-3$};
\node at (2,5) {$-2$};
\node at (3,5) {$-1$};
\node at (4,5) {$0$};
\node at (5,5) {$1$};
\node at (6,5) {$2$};
\node at (7,5) {$3$};
\node at (8,5) {$4$};
\node at (9,5) {$5$};

\node[below] at (-1,-5) {\begin{tabular}{c} \(  \) \\ \( -5 \) \end{tabular}};
\node[below] at (0,-5) {\begin{tabular}{c} \( -4 \) \\ \( -4 \) \end{tabular}};
\node[below] at (1,-5) {\begin{tabular}{c} \( -3 \) \\ \( -3 \) \end{tabular}};
\node[below] at (2,-5) {\begin{tabular}{c} \( -2 \) \\ \( -2 \) \end{tabular}};
\node[below] at (3,-5) {\begin{tabular}{c} \( -1 \) \\ \( -1 \) \end{tabular}};
\node[below] at (4,-5) {\begin{tabular}{c} \(  \) \\ \( 0 \) \end{tabular}};
\node[below] at (5,-5) {\begin{tabular}{c} \( 1 \) \\ \( 1 \) \end{tabular}};
\node[below] at (6,-5) {\begin{tabular}{c} \( 2 \) \\ \( 2 \) \end{tabular}};
\node[below] at (7,-5) {\begin{tabular}{c} \( 3 \) \\ \( 3 \) \end{tabular}};
\node[below] at (8,-5) {\begin{tabular}{c} \( 4 \) \\ \( 4 \) \end{tabular}};
\node[below] at (9,-5) {\begin{tabular}{c} \(  \) \\ \( 5 \) \end{tabular}};

\draw[->, thick] (0,0) -- (-1,-5);
\draw[->, thick] (1,-1) -- (0,-5);
\draw[->, thick] (2,-2) -- (1,-5);
\draw[->, thick] (3,-3) -- (2,-5);
\draw[->, thick] (4,-4) -- (3,-5);
\draw[->, thick] (4,-4) -- (5,-5);
\draw[->, thick] (5,-3) -- (6,-5);
\draw[->, thick] (6,-2) -- (7,-5);
\draw[->, thick] (7,-1) -- (8,-5);
\draw[->, thick] (8,0) -- (9,-5);

\draw[->, thick] (0,0) -- (0,-5);
\draw[->, thick] (1,-1) -- (1,-5);
\draw[->, thick] (2,-2) -- (2,-5);
\draw[->, thick] (3,-3) -- (3,-5);
\draw[->, thick] (4,-4) -- (4,-5);
\draw[->, thick] (5,-3) -- (5,-5);
\draw[->, thick] (6,-2) -- (6,-5);
\draw[->, thick] (7,-1) -- (7,-5);
\draw[->, thick] (8,0) -- (8,-5);

\end{tikzpicture}


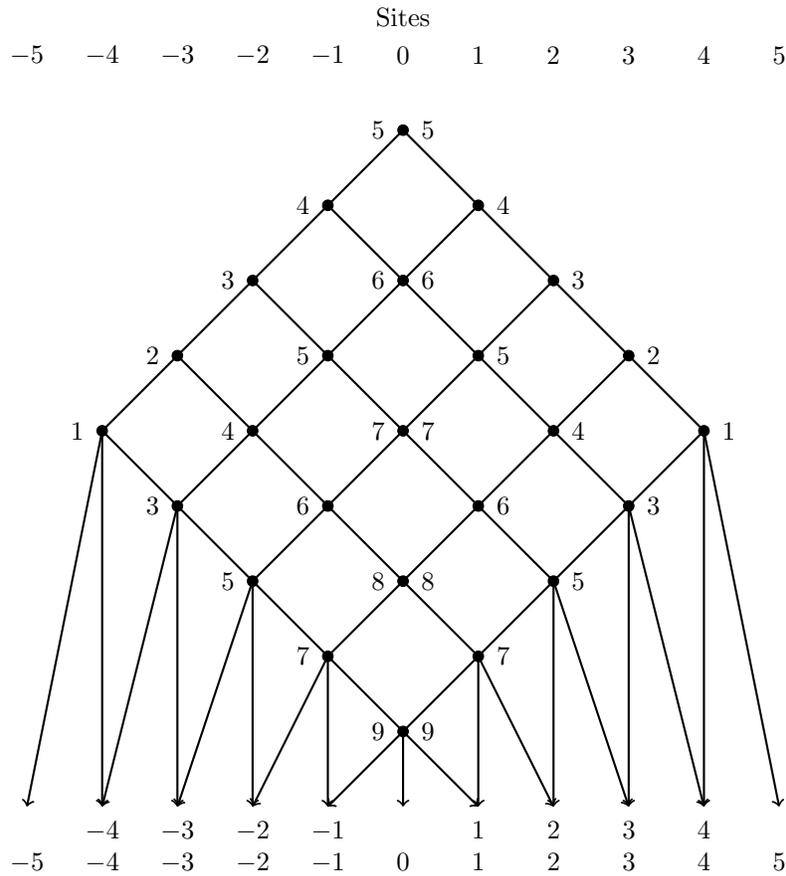
\captionof{figure}{Diagram for Lemma \ref{mainlemma2} and Lemma \ref{mainthm2}.  A number written to the left of a node is a lower bound on the number of chips with values less than the site number that must be to the left of the corresponding site when the firing move occurs.  The initial value at the top of the diamond is equal to $m-|k|$, where $k$ is the site number, and this minimum increases by 1 with each firing move at that site.  The arrows at the bottom show how the last firing moves at each site send each of the chips to their final, sorted positions.}

\end{center}

This brings us to our main result of this section:

\begin{theorem}\label{mainthm2}
In labeled chip firing with self loops, if the initial number of chips at the origin is congruent to 3 mod 4, then the chips end up in weakly sorted order.
\end{theorem}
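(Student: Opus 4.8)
The plan is to run the argument of Theorem~\ref{mainthm} almost verbatim, with Lemma~\ref{mainlemma2} playing the role of Lemma~\ref{mainlemma}. Write $n=4m-1$ and number the chips as described just before the theorem. The whole argument is driven by two elementary counts, both read off from Lemma~\ref{config}: for each $k$ with $-m+1\le k\le 0$ there are exactly $2m+2k-1$ chips of value less than $k$, and exactly $2m+2k-1$ chips occupy positions less than $k$ (namely $-m,\dots,k-1$) in the final configuration; the mirror statements hold on the positive side, and chip $0$ is the single chip left over in the middle.

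First I would apply Lemma~\ref{mainlemma2} with the index $j$ as large as it is allowed to be. For $-m+1\le k\le 0$ the largest admissible value is $j=m-|k|=m+k$, which indexes the last diamond move at site $k$; since the diamond consists of the final moves at each site, this is the last firing move at site $k$ altogether. Lemma~\ref{mainlemma2} then says that immediately after that move there are at least $(m+k)+k+m-1=2m+2k-1$ chips of value less than $k$ at positions less than $k$, i.e.\ \emph{every} such chip is already strictly to the left of $k$. The symmetric application shows that after the last firing move at site $k$, for $0\le k\le m-1$, every chip of value greater than $k$ is strictly to the right of $k$.

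Next I would invoke the diamond structure for this graph (the analogue of Lemma~\ref{diamond}, whose proof was omitted) to show that, after the last firing move at site $k$, no firing move ever occurs at a site $\le k$. The key fact, exactly as in Lemma~\ref{diamond}, is that the last move at a site $s$ is preceded by \emph{all} moves at both neighbouring sites $s-1$ and $s+1$; chaining this down the left edge of the diamond gives that the last move at site $k$ comes after the last move at site $k-1$, which comes after all moves at site $k-2$, and so on, hence after every move at every site $\le k-1$, as well as being the last move at site $k$ itself. Consequently a chip lying at a position less than $k$ at that moment can never afterward be fired rightward into position $k$, and no chip can enter the region of positions less than $k$ either, so the set of chips at positions less than $k$ is frozen, of size exactly $2m+2k-1$. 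Comparing with the final configuration's count of $2m+2k-1$ chips at positions less than $k$, the chips of value less than $k$ are \emph{precisely} those ending at positions less than $k$; symmetrically on the positive side.

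Finally, intersecting these statements over all $k$ pins each chip to its label: for $-m+1\le k\le-1$ the two chips labeled $k$ lie among the chips ending at positions $<k+1$ but not among those ending at positions $<k$, so both end at position $k$ (which holds exactly two chips), and likewise chip $-m$, chip $m$, each positive label, and chip $0$ are forced into place. Thus every chip ends at the position equal to its label, which is in particular a weakly sorted configuration, proving the theorem. I expect the middle step to be the main obstacle: one must carefully extract from the (omitted) diamond lemma the precise partial-order relations and the fact that every diamond move is preceded by all moves at its neighbouring sites, and then verify that the last move at site $k$ truly follows \emph{all} moves at \emph{every} site $\le k-1$; a secondary point needing care is matching the extremal index $j=m-|k|$ of Lemma~\ref{mainlemma2} with the last firing move at site $k$, since the indexing conventions for diamond moves must be lined up correctly.
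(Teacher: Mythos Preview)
Your proposal is correct and follows essentially the same approach as the paper's proof: apply Lemma~\ref{mainlemma2} at the extremal index $j=m-|k|$ to get $2(k+m)-1$ chips of value less than $k$ at positions less than $k$ after the final move at site $k$, and then conclude by the pigeonhole count against the final configuration. The paper compresses your middle step (that the region left of $k$ is frozen after the last move at site $k$) and your intersection argument into the single sentence ``the only way to satisfy this condition is to have all chips appear at positions equal to their labels,'' but the content is identical; your unpacking of the diamond relations $(0,-k)\le(0,-k+1)$ to justify the freezing is exactly what is implicit there.
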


\begin{proof}

By Lemma \ref{mainlemma2}, after the final firing move at position $k$ ($k<0$), there are $2(k+m)-1$ chips with values less than $k$ at positions less than $k$, with an analogous result holding for $k>0$.  Since the only way to satisfy this condition is to have all chips appear at positions equal to their labels, this means that the chips end up in weakly sorted order, as desired.

\end{proof}

\subsection{Self-Loops and Multiple Edges}

We now combine the cases of self-loops and multiple edges.  In this problem, each pair of adjacent nodes is connected by $r$ edges, and each node also has $r$ self-loops.  Each firing move now consists of choosing $3r$ chips at a given site, and then sending the $r$ smallest of those chips to the left and the $r$ largest to the right.  Conjecture 25 of \cite{HMP16} stated that all chips must end in weakly sorted order if the number of chips is divisible by $2r$.

This is essentially the same as the previous case in which everything has been scaled up by a factor of $r$.  In the same way that the multiple edges case extends the original problem, we can extend the self-loop problem here without too much extra work.  In particular, the firing move poset is the same, and the bounds that we were previously able to place on individual chips now apply to groups of $r$ chips.  With those changes in mind, the proofs of all relevant lemmas remain the same, and the same result is produced.

\begin{figure}
\begin{center}
\begin{tikzpicture}[scale=0.4]

\draw[black] (0,0) -- (0,-12);
\draw[black] (1,-3) -- (1,-11);
\draw[black] (-1,-3) -- (-1,-11);
\draw[black] (2,-8) -- (2,-11);
\draw[black] (-2,-8) -- (-2,-11);
\draw[black] (0,-2) -- (1,-3);
\draw[black] (0,-2) -- (-1,-3);
\draw[black] (0,-4) -- (1,-5);
\draw[black] (0,-4) -- (-1,-5);
\draw[black] (0,-6) -- (1,-7);
\draw[black] (0,-6) -- (-1,-7);
\draw[black] (0,-7) -- (1,-8);
\draw[black] (0,-7) -- (-1,-8);
\draw[black] (0,-9) -- (1,-10);
\draw[black] (0,-9) -- (-1,-10);
\draw[black] (0,-10) -- (1,-11);
\draw[black] (0,-10) -- (-1,-11);
\draw[black] (1,-7) -- (2,-8);
\draw[black] (-1,-7) -- (-2,-8);
\draw[black] (1,-10) -- (2,-11);
\draw[black] (-1,-10) -- (-2,-11);
\draw[black] (1,-5) -- (0,-9);
\draw[black] (-1,-5) -- (0,-9);
\draw[black] (1,-7) -- (0,-10);
\draw[black] (-1,-7) -- (0,-10);
\draw[black] (1,-8) -- (0,-11);
\draw[black] (-1,-8) -- (0,-11);
\draw[black] (2,-8) -- (1,-11);
\draw[black] (-2,-8) -- (-1,-11);
\draw[black] (1,-11) -- (0,-12);
\draw[black] (-1,-11) -- (0,-12);
\draw[black] (2,-11) -- (1,-13);
\draw[black] (-2,-11) -- (-1,-13);
\draw[black] (0,-12) -- (3,-15);
\draw[black] (-1,-13) -- (2,-16);
\draw[black] (-2,-14) -- (1,-17);
\draw[black] (-3,-15) -- (0,-18);
\draw[black] (0,-12) -- (-3,-15);
\draw[black] (1,-13) -- (-2,-16);
\draw[black] (2,-14) -- (-1,-17);
\draw[black] (3,-15) -- (0,-18);

\filldraw[black] (0,0) circle (3pt);
\filldraw[black] (0,-1) circle (3pt);
\filldraw[black] (0,-2) circle (3pt);
\filldraw[black] (0,-3) circle (3pt);
\filldraw[black] (0,-4) circle (3pt);
\filldraw[black] (0,-5) circle (3pt);
\filldraw[black] (0,-6) circle (3pt);
\filldraw[black] (0,-7) circle (3pt);
\filldraw[black] (0,-8) circle (3pt);
\filldraw[black] (0,-9) circle (3pt);
\filldraw[black] (0,-10) circle (3pt);
\filldraw[black] (0,-11) circle (3pt);
\filldraw[black] (0,-12) circle (3pt);
\filldraw[black] (0,-14) circle (3pt);
\filldraw[black] (0,-16) circle (3pt);
\filldraw[black] (0,-18) circle (3pt);
\filldraw[black] (1,-3) circle (3pt);
\filldraw[black] (1,-5) circle (3pt);
\filldraw[black] (1,-7) circle (3pt);
\filldraw[black] (1,-8) circle (3pt);
\filldraw[black] (1,-10) circle (3pt);
\filldraw[black] (1,-11) circle (3pt);
\filldraw[black] (1,-13) circle (3pt);
\filldraw[black] (1,-15) circle (3pt);
\filldraw[black] (1,-17) circle (3pt);
\filldraw[black] (-1,-3) circle (3pt);
\filldraw[black] (-1,-5) circle (3pt);
\filldraw[black] (-1,-7) circle (3pt);
\filldraw[black] (-1,-8) circle (3pt);
\filldraw[black] (-1,-10) circle (3pt);
\filldraw[black] (-1,-11) circle (3pt);
\filldraw[black] (-1,-13) circle (3pt);
\filldraw[black] (-1,-15) circle (3pt);
\filldraw[black] (-1,-17) circle (3pt);
\filldraw[black] (2,-8) circle (3pt);
\filldraw[black] (2,-11) circle (3pt);
\filldraw[black] (2,-14) circle (3pt);
\filldraw[black] (2,-16) circle (3pt);
\filldraw[black] (-2,-8) circle (3pt);
\filldraw[black] (-2,-11) circle (3pt);
\filldraw[black] (-2,-14) circle (3pt);
\filldraw[black] (-2,-16) circle (3pt);
\filldraw[black] (3,-15) circle (3pt);
\filldraw[black] (-3,-15) circle (3pt);

\draw[black] (10,0) -- (10,-1);
\draw[black] (10,-1) -- (11,-2);
\draw[black] (10,-1) -- (9,-2);
\draw[black] (9,-2) -- (12,-5);
\draw[black] (11,-2) -- (8,-5);
\draw[black] (9,-4) -- (13,-8);
\draw[black] (11,-4) -- (7,-8);
\draw[black] (8,-5) -- (14,-11);
\draw[black] (12,-5) -- (6,-11);
\draw[black] (8,-7) -- (13,-12);
\draw[black] (12,-7) -- (7,-12);
\draw[black] (7,-8) -- (12,-13);
\draw[black] (13,-8) -- (8,-13);
\draw[black] (7,-10) -- (11,-14);
\draw[black] (13,-10) -- (9,-14);
\draw[black] (6,-11) -- (10,-15);
\draw[black] (14,-11) -- (10,-15);

\filldraw[black](10,0) circle(3 pt);
\filldraw[black](10,-1) circle(3 pt);
\filldraw[black](10,-3) circle(3 pt);
\filldraw[black](10,-5) circle(3 pt);
\filldraw[black](10,-7) circle(3 pt);
\filldraw[black](10,-9) circle(3 pt);
\filldraw[black](10,-11) circle(3 pt);
\filldraw[black](10,-13) circle(3 pt);
\filldraw[black](10,-15) circle(3 pt);
\filldraw[black](11,-2) circle(3 pt);
\filldraw[black](11,-4) circle(3 pt);
\filldraw[black](11,-6) circle(3 pt);
\filldraw[black](11,-8) circle(3 pt);
\filldraw[black](11,-10) circle(3 pt);
\filldraw[black](11,-12) circle(3 pt);
\filldraw[black](11,-14) circle(3 pt);
\filldraw[black](9,-2) circle(3 pt);
\filldraw[black](9,-4) circle(3 pt);
\filldraw[black](9,-6) circle(3 pt);
\filldraw[black](9,-8) circle(3 pt);
\filldraw[black](9,-10) circle(3 pt);
\filldraw[black](9,-12) circle(3 pt);
\filldraw[black](9,-14) circle(3 pt);
\filldraw[black](12,-5) circle(3 pt);
\filldraw[black](12,-7) circle(3 pt);
\filldraw[black](12,-9) circle(3 pt);
\filldraw[black](12,-11) circle(3 pt);
\filldraw[black](12,-13) circle(3 pt);
\filldraw[black](8,-5) circle(3 pt);
\filldraw[black](8,-7) circle(3 pt);
\filldraw[black](8,-9) circle(3 pt);
\filldraw[black](8,-11) circle(3 pt);
\filldraw[black](8,-13) circle(3 pt);
\filldraw[black](13,-8) circle(3 pt);
\filldraw[black](13,-10) circle(3 pt);
\filldraw[black](13,-12) circle(3 pt);
\filldraw[black](7,-8) circle(3 pt);
\filldraw[black](7,-10) circle(3 pt);
\filldraw[black](7,-12) circle(3 pt);
\filldraw[black](14,-11) circle(3 pt);
\filldraw[black](6,-11) circle(3 pt);

\end{tikzpicture}
\caption{Left: the full firing order poset for $n=15$ in the self-loop problem.  Right: the firing order poset for the exponential edge case, with $t=3$ and $n=32$.} \label{figexp}
\end{center}
\end{figure}
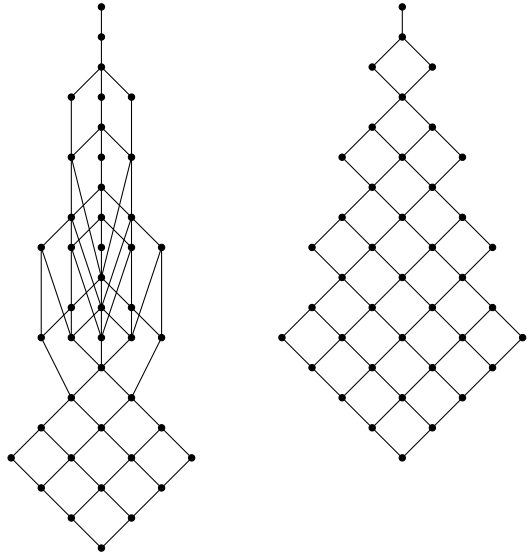

\section{Non-Sorting Cases}\label{non-sorting}

While much discussion has been devoted so far to a variety of settings in which labeled chip-firing can sort a collection of chips, this is not always the case.  Clearly, if we start with a single chip labeled $-1$ at position $1$, and a single chip labeled $1$ at position $-1$, then none of the chip-firing procedures discussed so far can possibly result in the sorting of this 2-chip configuration.  However, there are more subtle examples as well, with several non-sorting configurations that are very similar to the ones discussed so far.

\subsection{$1$-Dimensional Lattice, $n$ Odd}

The main result of this paper (and of \cite{HMP16}) was to prove that labeled chip-firing on a $1$-dimensional grid results in a sorted chip configuration where the number of chips $n$ is even.  When $n$ is odd, aside from the trivial $n=1$, this is no longer the case.

This distinction begins with the final unlabeled chip configuration.  With $2m$ chips, the resulting configuration has one chip each at sites $-m$ through $-1$, as well as at sites $1$ through $m$.  When a single chip is added, bringing the total to the odd $2m+1$, the only difference in the final configuration is that there is also a single chip at site 0.  Since the number of chips at every nonzero site remains the same, the number of firing moves required at each site does not change through the addition of this one chip.

\begin{center}
\begin{tikzpicture}[scale=0.7]

\draw[black, thick] (0,0) -- (0,-10);
\draw[black, thick] (1,-2) -- (1,-9);
\draw[black, thick] (-1,-2) -- (-1,-9);
\draw[black, thick] (2,-5) -- (2,-9);
\draw[black, thick] (-2,-5) -- (-2,-9);
\draw[black, thick] (0,-1) -- (1,-2);
\draw[black, thick] (0,-1) -- (-1,-2);
\draw[black, thick] (0,-3) -- (2,-5);
\draw[black, thick] (0,-4) -- (1,-5);
\draw[black, thick] (0,-3) -- (-2,-5);
\draw[black, thick] (0,-4) -- (-1,-5);
\draw[black, thick] (0,-6) -- (3,-9);
\draw[black, thick] (0,-7) -- (2,-9);
\draw[black, thick] (0,-8) -- (1,-9);
\draw[black, thick] (0,-6) -- (-3,-9);
\draw[black, thick] (0,-7) -- (-2,-9);
\draw[black, thick] (0,-8) -- (-1,-9);
\draw[black, thick] (0,-10) -- (4,-14);
\draw[black, thick] (-1,-11) -- (3,-15);
\draw[black, thick] (-2,-12) -- (2,-16);
\draw[black, thick] (-3,-13) -- (1,-17);
\draw[black, thick] (-4,-14) -- (0,-18);
\draw[black, thick] (0,-10) -- (-4,-14);
\draw[black, thick] (1,-11) -- (-3,-15);
\draw[black, thick] (2,-12) -- (-2,-16);
\draw[black, thick] (3,-13) -- (-1,-17);
\draw[black, thick] (4,-14) -- (0,-18);
\draw[black, thick] (1,-2) -- (0,-6);
\draw[black, thick] (1,-4) -- (0,-7);
\draw[black, thick] (1,-5) -- (0,-8);
\draw[black, thick] (1,-7) -- (0,-9);
\draw[black, thick] (1,-9) -- (0,-10);
\draw[black, thick] (-1,-2) -- (0,-6);
\draw[black, thick] (-1,-4) -- (0,-7);
\draw[black, thick] (-1,-5) -- (0,-8);
\draw[black, thick] (-1,-7) -- (0,-9);
\draw[black, thick] (-1,-9) -- (0,-10);
\draw[black, thick] (2,-8) -- (1,-9);
\draw[black, thick] (2,-9) -- (1,-11);
\draw[black, thick] (-2,-8) -- (-1,-9);
\draw[black, thick] (-2,-9) -- (-1,-11);
\draw[black, thick] (3,-9) -- (2,-12);
\draw[black, thick] (-3,-9) -- (-2,-12);

\filldraw[black] (0,0) circle (2pt) node[right] {$0_{15}$};
\filldraw[black] (0,-1) circle (2pt) node[right] {$0_{14}$};
\filldraw[black] (0,-2) circle (2pt) node[right] {$0_{13}$};
\filldraw[black] (0,-3) circle (2pt) node[right] {$0_{12}$};
\filldraw[black] (0,-4) circle (2pt) node[right] {$0_{11}$};
\filldraw[black] (0,-5) circle (2pt) node[right] {$0_{10}$};
\filldraw[black] (0,-6) circle (2pt) node[right] {$0_9$};
\filldraw[black] (0,-7) circle (2pt) node[right] {$0_8$};
\filldraw[black] (0,-8) circle (2pt) node[right] {$0_7$};
\filldraw[black] (0,-9) circle (2pt) node[right] {$0_6$};
\filldraw[black] (0,-10) circle (2pt) node[right] {$0_5$};
\filldraw[black] (0,-12) circle (2pt) node[right] {$0_4$};
\filldraw[black] (0,-14) circle (2pt) node[right] {$0_3$};
\filldraw[black] (0,-16) circle (2pt) node[right] {$0_2$};
\filldraw[black] (0,-18) circle (2pt) node[right] {$0_1$};
\filldraw[black] (1,-2) circle (2pt) node[right] {$1_{10}$};
\filldraw[black] (1,-4) circle (2pt) node[right] {$1_{9}$};
\filldraw[black] (1,-5) circle (2pt) node[right] {$1_{8}$};
\filldraw[black] (1,-7) circle (2pt) node[right] {$1_{7}$};
\filldraw[black] (1,-8) circle (2pt) node[right] {$1_{6}$};
\filldraw[black] (1,-9) circle (2pt) node[right] {$1_{5}$};
\filldraw[black] (1,-11) circle (2pt) node[right] {$1_{4}$};
\filldraw[black] (1,-13) circle (2pt) node[right] {$1_{3}$};
\filldraw[black] (1,-15) circle (2pt) node[right] {$1_{2}$};
\filldraw[black] (1,-17) circle (2pt) node[right] {$1_{1}$};
\filldraw[black] (-1,-2) circle (2pt) node[right] {$-1_{10}$};
\filldraw[black] (-1,-4) circle (2pt) node[right] {$-1_{9}$};
\filldraw[black] (-1,-5) circle (2pt) node[right] {$-1_{8}$};
\filldraw[black] (-1,-7) circle (2pt) node[right] {$-1_{7}$};
\filldraw[black] (-1,-8) circle (2pt) node[right] {$-1_{6}$};
\filldraw[black] (-1,-9) circle (2pt) node[right] {$-1_{5}$};
\filldraw[black] (-1,-11) circle (2pt) node[right] {$-1_{4}$};
\filldraw[black] (-1,-13) circle (2pt) node[right] {$-1_{3}$};
\filldraw[black] (-1,-15) circle (2pt) node[right] {$-1_{2}$};
\filldraw[black] (-1,-17) circle (2pt) node[right] {$-1_{1}$};
\filldraw[black] (2,-5) circle (2pt) node[right] {$2_{6}$};
\filldraw[black] (2,-8) circle (2pt) node[right] {$2_{5}$};
\filldraw[black] (2,-9) circle (2pt) node[right] {$2_{4}$};
\filldraw[black] (2,-12) circle (2pt) node[right] {$2_{3}$};
\filldraw[black] (2,-14) circle (2pt) node[right] {$2_{2}$};
\filldraw[black] (2,-16) circle (2pt) node[right] {$2_{1}$};
\filldraw[black] (-2,-5) circle (2pt) node[right] {$-2_{6}$};
\filldraw[black] (-2,-8) circle (2pt) node[right] {$-2_{5}$};
\filldraw[black] (-2,-9) circle (2pt) node[right] {$-2_{4}$};
\filldraw[black] (-2,-12) circle (2pt) node[right] {$-2_{3}$};
\filldraw[black] (-2,-14) circle (2pt) node[right] {$-2_{2}$};
\filldraw[black] (-2,-16) circle (2pt) node[right] {$-2_{1}$};
\filldraw[black] (3,-9) circle (2pt) node[right] {$3_{3}$};
\filldraw[black] (3,-13) circle (2pt) node[right] {$3_{2}$};
\filldraw[black] (3,-15) circle (2pt) node[right] {$3_{1}$};
\filldraw[black] (-3,-9) circle (2pt) node[right] {$-3_{3}$};
\filldraw[black] (-3,-13) circle (2pt) node[right] {$-3_{2}$};
\filldraw[black] (-3,-15) circle (2pt) node[right] {$-3_{1}$};
\filldraw[black] (4,-14) circle (2pt) node[right] {$4_{1}$};
\filldraw[black] (-4,-14) circle (2pt) node[right] {$-4_{1}$};

\draw[black, thick] (10,0) -- (10,-18);
\draw[black, thick] (11,-2) -- (11,-17);
\draw[black, thick] (9,-2) -- (9,-17);
\draw[black, thick] (12,-5) -- (12,-16);
\draw[black, thick] (8,-5) -- (8,-16);
\draw[black, thick] (13,-9) -- (13,-15);
\draw[black, thick] (7,-9) -- (7,-15);
\draw[black, thick] (10,-1) -- (11,-2);
\draw[black, thick] (10,-1) -- (9,-2);
\draw[black, thick] (10,-3) -- (12,-5);
\draw[black, thick] (10,-4) -- (11,-5);
\draw[black, thick] (10,-3) -- (8,-5);
\draw[black, thick] (10,-4) -- (9,-5);
\draw[black, thick] (10,-6) -- (13,-9);
\draw[black, thick] (10,-7) -- (12,-9);
\draw[black, thick] (10,-8) -- (11,-9);
\draw[black, thick] (10,-6) -- (7,-9);
\draw[black, thick] (10,-7) -- (8,-9);
\draw[black, thick] (10,-8) -- (9,-9);
\draw[black, thick] (10,-10) -- (14,-14);
\draw[black, thick] (10,-12) -- (13,-15);
\draw[black, thick] (10,-14) -- (12,-16);
\draw[black, thick] (10,-16) -- (11,-17);
\draw[black, thick] (10,-10) -- (6,-14);
\draw[black, thick] (10,-12) -- (7,-15);
\draw[black, thick] (10,-14) -- (8,-16);
\draw[black, thick] (10,-16) -- (9,-17);
\draw[black, thick] (11,-2) -- (10,-7);
\draw[black, thick] (11,-4) -- (10,-8);
\draw[black, thick] (11,-5) -- (10,-9);
\draw[black, thick] (11,-8) -- (10,-10);
\draw[black, thick] (11,-9) -- (10,-12);
\draw[black, thick] (11,-11) -- (10,-14);
\draw[black, thick] (11,-13) -- (10,-16);
\draw[black, thick] (11,-15) -- (10,-18);
\draw[black, thick] (9,-2) -- (10,-7);
\draw[black, thick] (9,-4) -- (10,-8);
\draw[black, thick] (9,-5) -- (10,-9);
\draw[black, thick] (9,-8) -- (10,-10);
\draw[black, thick] (9,-9) -- (10,-12);
\draw[black, thick] (9,-11) -- (10,-14);
\draw[black, thick] (9,-13) -- (10,-16);
\draw[black, thick] (9,-15) -- (10,-18);
\draw[black, thick] (12,-5) -- (11,-9);
\draw[black, thick] (12,-8) -- (11,-11);
\draw[black, thick] (12,-9) -- (11,-13);
\draw[black, thick] (12,-12) -- (11,-15);
\draw[black, thick] (12,-14) -- (11,-17);
\draw[black, thick] (8,-5) -- (9,-9);
\draw[black, thick] (8,-8) -- (9,-11);
\draw[black, thick] (8,-9) -- (9,-13);
\draw[black, thick] (8,-12) -- (9,-15);
\draw[black, thick] (8,-14) -- (9,-17);
\draw[black, thick] (13,-9) -- (12,-14);
\draw[black, thick] (13,-13) -- (12,-16);
\draw[black, thick] (7,-9) -- (8,-14);
\draw[black, thick] (7,-13) -- (8,-16);

\filldraw[black] (10,0) circle (2pt) node[right] {$0_{15}$};
\filldraw[black] (10,-1) circle (2pt) node[right] {$0_{14}$};
\filldraw[black] (10,-2) circle (2pt) node[right] {$0_{13}$};
\filldraw[black] (10,-3) circle (2pt) node[right] {$0_{12}$};
\filldraw[black] (10,-4) circle (2pt) node[right] {$0_{11}$};
\filldraw[black] (10,-5) circle (2pt) node[right] {$0_{10}$};
\filldraw[black] (10,-6) circle (2pt) node[right] {$0_9$};
\filldraw[black] (10,-7) circle (2pt) node[right] {$0_8$};
\filldraw[black] (10,-8) circle (2pt) node[right] {$0_7$};
\filldraw[black] (10,-9) circle (2pt) node[right] {$0_6$};
\filldraw[black] (10,-10) circle (2pt) node[right] {$0_5$};
\filldraw[black] (10,-12) circle (2pt) node[right] {$0_4$};
\filldraw[black] (10,-14) circle (2pt) node[right] {$0_3$};
\filldraw[black] (10,-16) circle (2pt) node[right] {$0_2$};
\filldraw[black] (10,-18) circle (2pt) node[right] {$0_1$};
\filldraw[black] (11,-2) circle (2pt) node[right] {$1_{10}$};
\filldraw[black] (11,-4) circle (2pt) node[right] {$1_{9}$};
\filldraw[black] (11,-5) circle (2pt) node[right] {$1_{8}$};
\filldraw[black] (11,-7) circle (2pt) node[right] {$1_{7}$};
\filldraw[black] (11,-8) circle (2pt) node[right] {$1_{6}$};
\filldraw[black] (11,-9) circle (2pt) node[right] {$1_{5}$};
\filldraw[black] (11,-11) circle (2pt) node[right] {$1_{4}$};
\filldraw[black] (11,-13) circle (2pt) node[right] {$1_{3}$};
\filldraw[black] (11,-15) circle (2pt) node[right] {$1_{2}$};
\filldraw[black] (11,-17) circle (2pt) node[right] {$1_{1}$};
\filldraw[black] (9,-2) circle (2pt) node[right] {$-1_{10}$};
\filldraw[black] (9,-4) circle (2pt) node[right] {$-1_{9}$};
\filldraw[black] (9,-5) circle (2pt) node[right] {$-1_{8}$};
\filldraw[black] (9,-7) circle (2pt) node[right] {$-1_{7}$};
\filldraw[black] (9,-8) circle (2pt) node[right] {$-1_{6}$};
\filldraw[black] (9,-9) circle (2pt) node[right] {$-1_{5}$};
\filldraw[black] (9,-11) circle (2pt) node[right] {$-1_{4}$};
\filldraw[black] (9,-13) circle (2pt) node[right] {$-1_{3}$};
\filldraw[black] (9,-15) circle (2pt) node[right] {$-1_{2}$};
\filldraw[black] (9,-17) circle (2pt) node[right] {$-1_{1}$};
\filldraw[black] (12,-5) circle (2pt) node[right] {$2_{6}$};
\filldraw[black] (12,-8) circle (2pt) node[right] {$2_{5}$};
\filldraw[black] (12,-9) circle (2pt) node[right] {$2_{4}$};
\filldraw[black] (12,-12) circle (2pt) node[right] {$2_{3}$};
\filldraw[black] (12,-14) circle (2pt) node[right] {$2_{2}$};
\filldraw[black] (12,-16) circle (2pt) node[right] {$2_{1}$};
\filldraw[black] (8,-5) circle (2pt) node[right] {$-2_{6}$};
\filldraw[black] (8,-8) circle (2pt) node[right] {$-2_{5}$};
\filldraw[black] (8,-9) circle (2pt) node[right] {$-2_{4}$};
\filldraw[black] (8,-12) circle (2pt) node[right] {$-2_{3}$};
\filldraw[black] (8,-14) circle (2pt) node[right] {$-2_{2}$};
\filldraw[black] (8,-16) circle (2pt) node[right] {$-2_{1}$};
\filldraw[black] (13,-9) circle (2pt) node[right] {$3_{3}$};
\filldraw[black] (13,-13) circle (2pt) node[right] {$3_{2}$};
\filldraw[black] (13,-15) circle (2pt) node[right] {$3_{1}$};
\filldraw[black] (7,-9) circle (2pt) node[right] {$-3_{3}$};
\filldraw[black] (7,-13) circle (2pt) node[right] {$-3_{2}$};
\filldraw[black] (7,-15) circle (2pt) node[right] {$-3_{1}$};
\filldraw[black] (14,-14) circle (2pt) node[right] {$4_{1}$};
\filldraw[black] (6,-14) circle (2pt) node[right] {$-4_{1}$};

\end{tikzpicture}


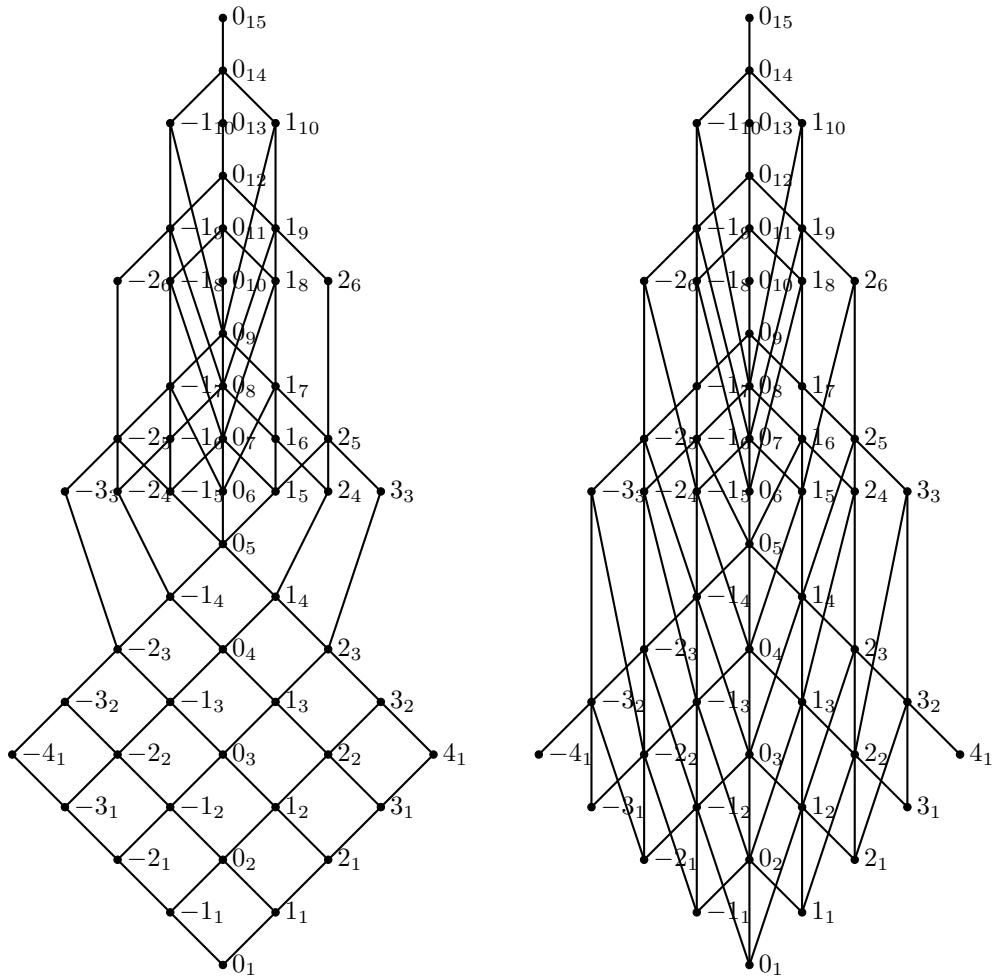
\captionof{figure}{The full firing order poset for $n=10$ (left) and $n=11$ (right).  These are identical to the Hasse diagrams for the $r$ edge case (for $10r \le n <11r$ and $11r \le n < 12r$, respectively) and for the origin self-loops case (for $n-s=10$ and $n-s=11$, respectively)}

\end{center}

The sorting proof breaks down almost immediately, as the last firing move at site 0 may now take place with up to 3 chips present.  This alone is enough to remove the guarantee of sorting.  Of the three ways to choose which two of the three chips to fire on the last move, only one choice will result in those three chips ending in sorted order.

This, however, is not the only way in which the change prevents sorting.  Since the last move at site $0$ can take place with three chips present, it can also take place earlier, before the last firing move at one of its neighbors.  This in turn breaks down the entire diamond structure, allowing any of the moves in the diamond to occur with three chips present (although not all in the same firing sequence).  In particular, the last firing move of the entire process must take place with three chips present.  This also allows the last firing move in the entire process to take place at any site that fires at least once, rather than only at the origin.  We compare the firing order posets for $n=10$ and $n=11$ below.

\begin{center}
\begin{tikzpicture}[scale=0.4]

\draw[black] (0,0) -- (0,-12);
\draw[black] (1,-3) -- (1,-11);
\draw[black] (-1,-3) -- (-1,-11);
\draw[black] (2,-8) -- (2,-11);
\draw[black] (-2,-8) -- (-2,-11);
\draw[black] (0,-2) -- (1,-3);
\draw[black] (0,-2) -- (-1,-3);
\draw[black] (0,-4) -- (1,-5);
\draw[black] (0,-4) -- (-1,-5);
\draw[black] (0,-6) -- (1,-7);
\draw[black] (0,-6) -- (-1,-7);
\draw[black] (0,-7) -- (1,-8);
\draw[black] (0,-7) -- (-1,-8);
\draw[black] (0,-9) -- (1,-10);
\draw[black] (0,-9) -- (-1,-10);
\draw[black] (0,-10) -- (1,-11);
\draw[black] (0,-10) -- (-1,-11);
\draw[black] (1,-7) -- (2,-8);
\draw[black] (-1,-7) -- (-2,-8);
\draw[black] (1,-10) -- (2,-11);
\draw[black] (-1,-10) -- (-2,-11);
\draw[black] (1,-5) -- (0,-9);
\draw[black] (-1,-5) -- (0,-9);
\draw[black] (1,-7) -- (0,-10);
\draw[black] (-1,-7) -- (0,-10);
\draw[black] (1,-8) -- (0,-11);
\draw[black] (-1,-8) -- (0,-11);
\draw[black] (2,-8) -- (1,-11);
\draw[black] (-2,-8) -- (-1,-11);
\draw[black] (1,-11) -- (0,-12);
\draw[black] (-1,-11) -- (0,-12);
\draw[black] (2,-11) -- (1,-13);
\draw[black] (-2,-11) -- (-1,-13);
\draw[black] (0,-12) -- (3,-15);
\draw[black] (-1,-13) -- (2,-16);
\draw[black] (-2,-14) -- (1,-17);
\draw[black] (-3,-15) -- (0,-18);
\draw[black] (0,-12) -- (-3,-15);
\draw[black] (1,-13) -- (-2,-16);
\draw[black] (2,-14) -- (-1,-17);
\draw[black] (3,-15) -- (0,-18);

\filldraw[black] (0,0) circle (3pt);
\filldraw[black] (0,-1) circle (3pt);
\filldraw[black] (0,-2) circle (3pt);
\filldraw[black] (0,-3) circle (3pt);
\filldraw[black] (0,-4) circle (3pt);
\filldraw[black] (0,-5) circle (3pt);
\filldraw[black] (0,-6) circle (3pt);
\filldraw[black] (0,-7) circle (3pt);
\filldraw[black] (0,-8) circle (3pt);
\filldraw[black] (0,-9) circle (3pt);
\filldraw[black] (0,-10) circle (3pt);
\filldraw[black] (0,-11) circle (3pt);
\filldraw[black] (0,-12) circle (3pt);
\filldraw[black] (0,-14) circle (3pt);
\filldraw[black] (0,-16) circle (3pt);
\filldraw[black] (0,-18) circle (3pt);
\filldraw[black] (1,-3) circle (3pt);
\filldraw[black] (1,-5) circle (3pt);
\filldraw[black] (1,-7) circle (3pt);
\filldraw[black] (1,-8) circle (3pt);
\filldraw[black] (1,-10) circle (3pt);
\filldraw[black] (1,-11) circle (3pt);
\filldraw[black] (1,-13) circle (3pt);
\filldraw[black] (1,-15) circle (3pt);
\filldraw[black] (1,-17) circle (3pt);
\filldraw[black] (-1,-3) circle (3pt);
\filldraw[black] (-1,-5) circle (3pt);
\filldraw[black] (-1,-7) circle (3pt);
\filldraw[black] (-1,-8) circle (3pt);
\filldraw[black] (-1,-10) circle (3pt);
\filldraw[black] (-1,-11) circle (3pt);
\filldraw[black] (-1,-13) circle (3pt);
\filldraw[black] (-1,-15) circle (3pt);
\filldraw[black] (-1,-17) circle (3pt);
\filldraw[black] (2,-8) circle (3pt);
\filldraw[black] (2,-11) circle (3pt);
\filldraw[black] (2,-14) circle (3pt);
\filldraw[black] (2,-16) circle (3pt);
\filldraw[black] (-2,-8) circle (3pt);
\filldraw[black] (-2,-11) circle (3pt);
\filldraw[black] (-2,-14) circle (3pt);
\filldraw[black] (-2,-16) circle (3pt);
\filldraw[black] (3,-15) circle (3pt);
\filldraw[black] (-3,-15) circle (3pt);

\draw[black] (8,0) -- (8,-18);
\draw[black] (9,-3) -- (9,-17);
\draw[black] (7,-3) -- (7,-17);
\draw[black] (10,-8) -- (10,-16);
\draw[black] (6,-8) -- (6,-16);
\draw[black] (8,-2) -- (9,-3);
\draw[black] (8,-2) -- (7,-3);
\draw[black] (8,-4) -- (9,-5);
\draw[black] (8,-4) -- (7,-5);
\draw[black] (8,-6) -- (9,-7);
\draw[black] (8,-6) -- (7,-7);
\draw[black] (8,-7) -- (9,-8);
\draw[black] (8,-7) -- (7,-8);
\draw[black] (8,-9) -- (9,-10);
\draw[black] (8,-9) -- (7,-10);
\draw[black] (8,-10) -- (9,-11);
\draw[black] (8,-10) -- (7,-11);
\draw[black] (9,-7) -- (10,-8);
\draw[black] (7,-7) -- (6,-8);
\draw[black] (9,-10) -- (10,-11);
\draw[black] (7,-10) -- (6,-11);
\draw[black] (8,-12) -- (11,-15);
\draw[black] (8,-12) -- (5,-15);
\draw[black] (8,-14) -- (10,-16);
\draw[black] (8,-14) -- (6,-16);
\draw[black] (8,-16) -- (9,-17);
\draw[black] (8,-16) -- (7,-17);
\draw[black] (9,-3) -- (8,-9);
\draw[black] (7,-3) -- (8,-9);
\draw[black] (9,-5) -- (8,-10);
\draw[black] (7,-5) -- (8,-10);
\draw[black] (9,-7) -- (8,-11);
\draw[black] (7,-7) -- (8,-11);
\draw[black] (9,-10) -- (8,-12);
\draw[black] (7,-10) -- (8,-12);
\draw[black] (9,-11) -- (8,-14);
\draw[black] (7,-11) -- (8,-14);
\draw[black] (9,-13) -- (8,-16);
\draw[black] (7,-13) -- (8,-16);
\draw[black] (9,-15) -- (8,-18);
\draw[black] (7,-15) -- (8,-18);
\draw[black] (10,-8) -- (9,-13);
\draw[black] (6,-8) -- (7,-13);
\draw[black] (10,-11) -- (9,-15);
\draw[black] (6,-11) -- (7,-15);
\draw[black] (10,-14) -- (9,-17);
\draw[black] (6,-14) -- (7,-17);

\filldraw[black] (8,0) circle (3pt);
\filldraw[black] (8,-1) circle (3pt);
\filldraw[black] (8,-2) circle (3pt);
\filldraw[black] (8,-3) circle (3pt);
\filldraw[black] (8,-4) circle (3pt);
\filldraw[black] (8,-5) circle (3pt);
\filldraw[black] (8,-6) circle (3pt);
\filldraw[black] (8,-7) circle (3pt);
\filldraw[black] (8,-8) circle (3pt);
\filldraw[black] (8,-9) circle (3pt);
\filldraw[black] (8,-10) circle (3pt);
\filldraw[black] (8,-11) circle (3pt);
\filldraw[black] (8,-12) circle (3pt);
\filldraw[black] (8,-14) circle (3pt);
\filldraw[black] (8,-16) circle (3pt);
\filldraw[black] (8,-18) circle (3pt);
\filldraw[black] (9,-3) circle (3pt);
\filldraw[black] (9,-5) circle (3pt);
\filldraw[black] (9,-7) circle (3pt);
\filldraw[black] (9,-8) circle (3pt);
\filldraw[black] (9,-10) circle (3pt);
\filldraw[black] (9,-11) circle (3pt);
\filldraw[black] (9,-13) circle (3pt);
\filldraw[black] (9,-15) circle (3pt);
\filldraw[black] (9,-17) circle (3pt);
\filldraw[black] (7,-3) circle (3pt);
\filldraw[black] (7,-5) circle (3pt);
\filldraw[black] (7,-7) circle (3pt);
\filldraw[black] (7,-8) circle (3pt);
\filldraw[black] (7,-10) circle (3pt);
\filldraw[black] (7,-11) circle (3pt);
\filldraw[black] (7,-13) circle (3pt);
\filldraw[black] (7,-15) circle (3pt);
\filldraw[black] (7,-17) circle (3pt);
\filldraw[black] (10,-8) circle (3pt);
\filldraw[black] (10,-11) circle (3pt);
\filldraw[black] (10,-14) circle (3pt);
\filldraw[black] (10,-16) circle (3pt);
\filldraw[black] (6,-8) circle (3pt);
\filldraw[black] (6,-11) circle (3pt);
\filldraw[black] (6,-14) circle (3pt);
\filldraw[black] (6,-16) circle (3pt);
\filldraw[black] (11,-15) circle (3pt);
\filldraw[black] (5,-15) circle (3pt);

\draw[black] (16,0) -- (16,-16);
\draw[black] (17,-3) -- (17,-15);
\draw[black] (15,-3) -- (15,-15);
\draw[black] (18,-8) -- (18,-15);
\draw[black] (14,-8) -- (14,-15);
\draw[black] (16,-2) -- (17,-3);
\draw[black] (16,-2) -- (15,-3);
\draw[black] (16,-4) -- (17,-5);
\draw[black] (16,-4) -- (15,-5);
\draw[black] (16,-6) -- (18,-8);
\draw[black] (16,-6) -- (14,-8);
\draw[black] (16,-7) -- (17,-8);
\draw[black] (16,-7) -- (15,-8);
\draw[black] (16,-9) -- (18,-11);
\draw[black] (16,-9) -- (14,-11);
\draw[black] (16,-10) -- (17,-11);
\draw[black] (16,-10) -- (15,-11);
\draw[black] (16,-12) -- (19,-15);
\draw[black] (16,-12) -- (13,-15);
\draw[black] (16,-13) -- (18,-15);
\draw[black] (16,-13) -- (14,-15);
\draw[black] (16,-14) -- (17,-15);
\draw[black] (16,-14) -- (15,-15);
\draw[black] (16,-16) -- (19,-19);
\draw[black] (15,-17) -- (18,-20);
\draw[black] (14,-18) -- (17,-21);
\draw[black] (13,-19) -- (16,-22);
\draw[black] (16,-16) -- (13,-19);
\draw[black] (17,-17) -- (14,-20);
\draw[black] (18,-18) -- (15,-21);
\draw[black] (19,-19) -- (16,-22);
\draw[black] (17,-3) -- (16,-10);
\draw[black] (15,-3) -- (16,-10);
\draw[black] (17,-5) -- (16,-11);
\draw[black] (15,-5) -- (16,-11);
\draw[black] (17,-8) -- (16,-12);
\draw[black] (15,-8) -- (16,-12);
\draw[black] (17,-10) -- (16,-13);
\draw[black] (15,-10) -- (16,-13);
\draw[black] (17,-11) -- (16,-14);
\draw[black] (15,-11) -- (16,-14);
\draw[black] (17,-13) -- (16,-15);
\draw[black] (15,-13) -- (16,-15);
\draw[black] (17,-15) -- (16,-16);
\draw[black] (15,-15) -- (16,-16);
\draw[black] (18,-8) -- (17,-14);
\draw[black] (14,-8) -- (15,-14);
\draw[black] (18,-14) -- (17,-15);
\draw[black] (14,-14) -- (15,-15);
\draw[black] (18,-15) -- (17,-17);
\draw[black] (14,-15) -- (15,-17);
\draw[black] (19,-15) -- (18,-18);
\draw[black] (13,-15) -- (14,-18);

\filldraw[black] (16,0) circle (3pt);
\filldraw[black] (16,-1) circle (3pt);
\filldraw[black] (16,-2) circle (3pt);
\filldraw[black] (16,-3) circle (3pt);
\filldraw[black] (16,-4) circle (3pt);
\filldraw[black] (16,-5) circle (3pt);
\filldraw[black] (16,-6) circle (3pt);
\filldraw[black] (16,-7) circle (3pt);
\filldraw[black] (16,-8) circle (3pt);
\filldraw[black] (16,-9) circle (3pt);
\filldraw[black] (16,-10) circle (3pt);
\filldraw[black] (16,-11) circle (3pt);
\filldraw[black] (16,-12) circle (3pt);
\filldraw[black] (16,-13) circle (3pt);
\filldraw[black] (16,-14) circle (3pt);
\filldraw[black] (16,-15) circle (3pt);
\filldraw[black] (16,-16) circle (3pt);
\filldraw[black] (16,-18) circle (3pt);
\filldraw[black] (16,-20) circle (3pt);
\filldraw[black] (16,-22) circle (3pt);
\filldraw[black] (17,-3) circle (3pt);
\filldraw[black] (17,-5) circle (3pt);
\filldraw[black] (17,-7) circle (3pt);
\filldraw[black] (17,-8) circle (3pt);
\filldraw[black] (17,-10) circle (3pt);
\filldraw[black] (17,-11) circle (3pt);
\filldraw[black] (17,-13) circle (3pt);
\filldraw[black] (17,-14) circle (3pt);
\filldraw[black] (17,-15) circle (3pt);
\filldraw[black] (17,-17) circle (3pt);
\filldraw[black] (17,-19) circle (3pt);
\filldraw[black] (17,-21) circle (3pt);
\filldraw[black] (15,-3) circle (3pt);
\filldraw[black] (15,-5) circle (3pt);
\filldraw[black] (15,-7) circle (3pt);
\filldraw[black] (15,-8) circle (3pt);
\filldraw[black] (15,-10) circle (3pt);
\filldraw[black] (15,-11) circle (3pt);
\filldraw[black] (15,-13) circle (3pt);
\filldraw[black] (15,-14) circle (3pt);
\filldraw[black] (15,-15) circle (3pt);
\filldraw[black] (15,-17) circle (3pt);
\filldraw[black] (15,-19) circle (3pt);
\filldraw[black] (15,-21) circle (3pt);
\filldraw[black] (18,-8) circle (3pt);
\filldraw[black] (18,-11) circle (3pt);
\filldraw[black] (18,-14) circle (3pt);
\filldraw[black] (18,-15) circle (3pt);
\filldraw[black] (18,-18) circle (3pt);
\filldraw[black] (18,-20) circle (3pt);
\filldraw[black] (14,-8) circle (3pt);
\filldraw[black] (14,-11) circle (3pt);
\filldraw[black] (14,-14) circle (3pt);
\filldraw[black] (14,-15) circle (3pt);
\filldraw[black] (14,-18) circle (3pt);
\filldraw[black] (14,-20) circle (3pt);
\filldraw[black] (19,-15) circle (3pt);
\filldraw[black] (19,-19) circle (3pt);
\filldraw[black] (13,-15) circle (3pt);
\filldraw[black] (13,-19) circle (3pt);

\draw[black] (24,0) -- (24,-22);
\draw[black] (25,-3) -- (25,-21);
\draw[black] (23,-3) -- (23,-21);
\draw[black] (26,-8) -- (26,-20);
\draw[black] (22,-8) -- (22,-20);
\draw[black] (27,-15) -- (27,-19);
\draw[black] (21,-15) -- (21,-19);
\draw[black] (24,-2) -- (25,-3);
\draw[black] (24,-2) -- (23,-3);
\draw[black] (24,-4) -- (25,-5);
\draw[black] (24,-4) -- (23,-5);
\draw[black] (24,-6) -- (26,-8);
\draw[black] (24,-6) -- (22,-8);
\draw[black] (24,-7) -- (25,-8);
\draw[black] (24,-7) -- (23,-8);
\draw[black] (24,-9) -- (26,-11);
\draw[black] (24,-9) -- (22,-11);
\draw[black] (24,-10) -- (25,-11);
\draw[black] (24,-10) -- (23,-11);
\draw[black] (24,-12) -- (27,-15);
\draw[black] (24,-12) -- (21,-15);
\draw[black] (24,-13) -- (26,-15);
\draw[black] (24,-13) -- (22,-15);
\draw[black] (24,-14) -- (25,-15);
\draw[black] (24,-14) -- (23,-15);
\draw[black] (24,-16) -- (27,-19);
\draw[black] (24,-16) -- (21,-19);
\draw[black] (24,-18) -- (26,-20);
\draw[black] (24,-18) -- (22,-20);
\draw[black] (24,-20) -- (25,-21);
\draw[black] (24,-20) -- (23,-21);
\draw[black] (25,-3) -- (24,-11);
\draw[black] (23,-3) -- (24,-11);
\draw[black] (25,-7) -- (24,-12);
\draw[black] (23,-7) -- (24,-12);
\draw[black] (25,-8) -- (24,-13);
\draw[black] (23,-8) -- (24,-13);
\draw[black] (25,-10) -- (24,-14);
\draw[black] (23,-10) -- (24,-14);
\draw[black] (25,-11) -- (24,-15);
\draw[black] (23,-11) -- (24,-15);
\draw[black] (25,-14) -- (24,-16);
\draw[black] (23,-14) -- (24,-16);
\draw[black] (25,-15) -- (24,-18);
\draw[black] (23,-15) -- (24,-18);
\draw[black] (25,-17) -- (24,-20);
\draw[black] (23,-17) -- (24,-20);
\draw[black] (25,-19) -- (24,-22);
\draw[black] (23,-19) -- (24,-22);
\draw[black] (26,-8) -- (25,-14);
\draw[black] (22,-8) -- (23,-14);
\draw[black] (26,-11) -- (25,-15);
\draw[black] (22,-11) -- (23,-15);
\draw[black] (26,-14) -- (25,-17);
\draw[black] (22,-14) -- (23,-17);
\draw[black] (26,-15) -- (25,-19);
\draw[black] (22,-15) -- (23,-19);
\draw[black] (26,-18) -- (25,-21);
\draw[black] (22,-18) -- (23,-21);
\draw[black] (27,-15) -- (26,-20);
\draw[black] (21,-15) -- (22,-20);

\filldraw[black] (24,0) circle (3pt);
\filldraw[black] (24,-1) circle (3pt);
\filldraw[black] (24,-2) circle (3pt);
\filldraw[black] (24,-3) circle (3pt);
\filldraw[black] (24,-4) circle (3pt);
\filldraw[black] (24,-5) circle (3pt);
\filldraw[black] (24,-6) circle (3pt);
\filldraw[black] (24,-7) circle (3pt);
\filldraw[black] (24,-8) circle (3pt);
\filldraw[black] (24,-9) circle (3pt);
\filldraw[black] (24,-10) circle (3pt);
\filldraw[black] (24,-11) circle (3pt);
\filldraw[black] (24,-12) circle (3pt);
\filldraw[black] (24,-13) circle (3pt);
\filldraw[black] (24,-14) circle (3pt);
\filldraw[black] (24,-15) circle (3pt);
\filldraw[black] (24,-16) circle (3pt);
\filldraw[black] (24,-18) circle (3pt);
\filldraw[black] (24,-20) circle (3pt);
\filldraw[black] (24,-22) circle (3pt);
\filldraw[black] (25,-3) circle (3pt);
\filldraw[black] (25,-5) circle (3pt);
\filldraw[black] (25,-7) circle (3pt);
\filldraw[black] (25,-8) circle (3pt);
\filldraw[black] (25,-10) circle (3pt);
\filldraw[black] (25,-11) circle (3pt);
\filldraw[black] (25,-13) circle (3pt);
\filldraw[black] (25,-14) circle (3pt);
\filldraw[black] (25,-15) circle (3pt);
\filldraw[black] (25,-17) circle (3pt);
\filldraw[black] (25,-19) circle (3pt);
\filldraw[black] (25,-21) circle (3pt);
\filldraw[black] (23,-3) circle (3pt);
\filldraw[black] (23,-5) circle (3pt);
\filldraw[black] (23,-7) circle (3pt);
\filldraw[black] (23,-8) circle (3pt);
\filldraw[black] (23,-10) circle (3pt);
\filldraw[black] (23,-11) circle (3pt);
\filldraw[black] (23,-13) circle (3pt);
\filldraw[black] (23,-14) circle (3pt);
\filldraw[black] (23,-15) circle (3pt);
\filldraw[black] (23,-17) circle (3pt);
\filldraw[black] (23,-19) circle (3pt);
\filldraw[black] (23,-21) circle (3pt);
\filldraw[black] (26,-8) circle (3pt);
\filldraw[black] (26,-11) circle (3pt);
\filldraw[black] (26,-14) circle (3pt);
\filldraw[black] (26,-15) circle (3pt);
\filldraw[black] (26,-18) circle (3pt);
\filldraw[black] (26,-20) circle (3pt);
\filldraw[black] (22,-8) circle (3pt);
\filldraw[black] (22,-11) circle (3pt);
\filldraw[black] (22,-14) circle (3pt);
\filldraw[black] (22,-15) circle (3pt);
\filldraw[black] (22,-18) circle (3pt);
\filldraw[black] (22,-20) circle (3pt);
\filldraw[black] (27,-15) circle (3pt);
\filldraw[black] (27,-19) circle (3pt);
\filldraw[black] (21,-15) circle (3pt);
\filldraw[black] (21,-19) circle (3pt);

\end{tikzpicture}


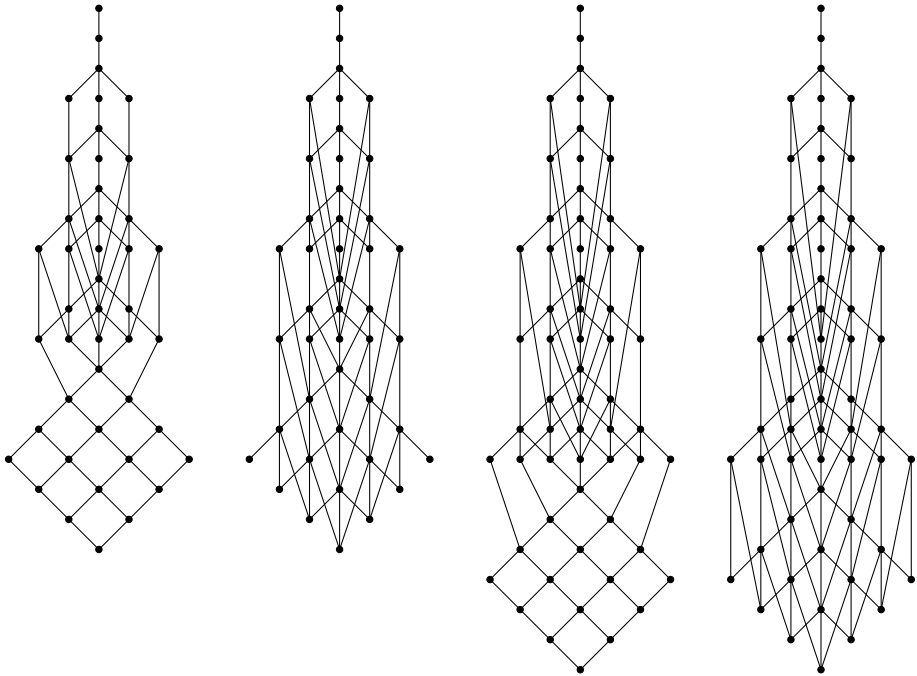
\captionof{figure}{The full firing order poset with one self-loop at each node for $n=$ $15$, $16$, $17$, and $18$.  Both the $n\equiv 1$ mod $4$ and $n\equiv 3$ mod $4$ cases have a diamond grid at the bottom, but the $n\equiv 1$ mod $4$ case has another equally large triangle above the diamond that can send chips past their sorted positions.}

\end{center}

\subsection{Self-Loop Graph, $n\equiv$ $1$ mod $4$}

It turns out that there can be distinctions that are even more subtle that can still prevent sorting from occurring.  If we consider the one dimensional grid with a single self-loop at each node, then for similar reasons as above, the chips might not sort when $n$ is even.  The last firing move may take place with 4 chips present, so that move may not sort those 4 chips if the wrong 3 are chosen to fire.

In the case where $n \equiv 1 \text{ mod } 4$, this issue does not arise.  In fact, the cases $n=4m-1$ and $n=4m+1$ both have identical diamonds at the end of their respective firing move posets, although the latter case does have additional moves that take place before then.  However, aside from $n=1$, sorting is never guaranteed when $n \equiv 1\text{ mod } 4$.  The Hasse diagrams for the full firing-order posets for $n=15$ through 18 are shown above.

Because this is so similar to the $n \equiv 3 \text{ mod } 4$ case, which does sort, this is in some sense a cutoff at which the diamond becomes ``too small'' to fix all of the out-of-order chips that it would need to fix.  There still appear to be relatively few unsorted configurations that can be reached, but such configurations nonetheless always exist for $n>1$.

To see this, we first consider the final unlabeled configuration in the $1 \text{ mod } 4$ case.  This will consist of 2 chips each at every site from $-m$ to $-1$ and $1$ to $m$ (where $n=4m+1$), and 1 chip at site 0.  This differs from the 1 mod 4 case in that the outermost sites now have 2 chips instead of 1.  If 1 mod 4 is the case where 1 extra chip ``spills over'' into each of two new positions, the 3 mod 4 case is where a second chip spills over to those positions.

Now, to reach an unsorted final configuration, we do the following.  Letting $n=4m+1$, we take both chips labeled $-m$ (since we now have 2 such chips instead of 1) and leave them at the origin as long as we can.  If we perform all other possible firing moves, then this is equivalent to the labeled chip-firing problem with only $4m-1$ chips, so all of these will end up in sorted order.  In particular, a chip with label $-m-1$ will end up at position $-m$.  We then allow the two chips labeled $-m$ to fire and run the process to completion.

Since site $-m$ never fires, this means that the chip labeled $-m-1$ will remain at site $-m$ until the end of the process, resulting in an unsorted configuration.  We can do this for any $m$, so it is always possible to avoid sorting if we choose our moves carefully for this case.

\section{Discussion}\label{discussion}

The earlier problems in which chip-firing methods were able to be used for sorting, along with some cases that don't, suggest the use of a pair of helpful conditions.  These two informal conditions, when taken together, are sufficient for proving global confluence in certain systems in which local confluence does not generally apply:

\begin{enumerate}

\item There is some collection of moves at the end of the process that do satisfy local confluence.
\item When the process enters this locally confluent region, all of the states that the system can be in will converge to the same final state when the process is run to completion.

\end{enumerate}

Note that if a system meets condition (1), then the state of the system when it enters the locally confluent region uniquely determines the final state.  Condition (2) requires additional restrictions on the process before that point in order to ensure that the unique final state is the same for every initial state in the confluent region.

For a wide range of problems, we were able to prove confluence by first establishing the existence of the diamond, which satisfies the first condition, and then by putting bounds on chip positions when they enter the diamond.  These bounds, combined with an analysis of what can happen in the diamond itself, provide us with the second condition, and thus with global confluence of the system.

\bibliographystyle{plain}

\bibliography{References}

\end{document}